\providecommand{\U}[1]{\protect\rule{.1in}{.1in}}
\numberwithin{equation}{section}
\newtheorem{theorem}{Theorem}[section]
\newtheorem{lemma}[theorem]{Lemma}
\newtheorem{corollary}[theorem]{Corollary}
\newtheorem{remark}[theorem]{Remark}
\newtheorem{example}[theorem]{Example}
\newtheorem{definition}[theorem]{Definition}
\newtheorem{assumption}[theorem]{Assumption}
\def\<{\langle}
\def\>{\rangle}
\def\E{\mathbb{E}}
\def\P{\mathbb{P}}
\def\R{\mathbb{R}}
\def\T{\mathbb{T}}
\def\Z{\mathbb{Z}}
\begin{document}
	\title{Structure preservation and emergent dissipation in stochastic wave equations with transport noise}
	\author{Chang Liu$^{2,3}$\footnote{Email: liuchang2021@amss.ac.cn}
		\quad Dejun Luo$^{1,2}$\footnote{Email: luodj@amss.ac.cn} \medskip \\
		{\footnotesize $^1$SKLMS, Academy of Mathematics and Systems Science, Chinese Academy of Sciences, Beijing 100190, China}\\
		{\footnotesize $^2$School of Mathematical Sciences, University of Chinese Academy of Sciences, Beijing 100049, China}\\
		{\footnotesize $^3$Academy of Mathematics and Systems Science, Chinese Academy of Sciences, Beijing 100190, China}}
	\maketitle
	
	\vspace{-20pt}
	
	\begin{abstract}
		We study nonlinear wave equations perturbed by transport noise acting either on the displacement or on the velocity. Such noise  models random advection and, under suitable scaling of space covariance, may generate an effective dissipative term. We establish well-posedness in both cases and analyse the associated scaling limits. When the noise acts on the displacement, the system preserves its original structure and converges to the deterministic nonlinear wave equation, whereas if it acts on the velocity, the rescaled dynamics produce an additional Laplacian damping term, leading to a stochastic derivation of a Westervelt-type acoustic model.
	\end{abstract}
	
	\textbf{Keywords:} stochastic wave equation, transport noise, scaling limit, effective dissipation

	\section{Introduction}\label{sec-intro}
	
	Waves arise throughout nature and in a wide range of scientific applications, including the vibration of a string, the propagation of sound in air, and the motion of water on a lake. They are typically described by hyperbolic partial differential equations. Over the years, many nonlinear wave models have been introduced to capture phenomena in optics, elasticity, fluid dynamics, and high intensity acoustics.
	In many realistic situations, however, the medium exhibits rapidly fluctuating structures or irregular motion induced by turbulence or environmental noise, which are difficult to be fully represented by deterministic models. To account for these effects, stochastic perturbations are introduced, leading naturally to stochastic wave equations (SWEs), which were originally studied in the 1960s to model wave propagation in randomly fluctuating media.
	
	We briefly recall some previous works among the vast literature on SWEs. The early papers in the proceedings \cite{Chow81} deal with wave propagation and the effect of multiple scattering in random media. Later, Carmona and Nualart \cite{CarNua88a, CarNua88b} studied the smoothness of solutions and propagation of singularities for random nonlinear wave equations. Mueller \cite{Mueller1997} investigated the long time existence for SWEs with multiplicative noise in low spatial dimensions $d\leq 2$, see \cite{DalFran98} for related results in dimension 2. For SWEs with polynomial nonlinearity, Chow \cite{Chow 2002} analysed how random perturbations affect blow-up phenomena and global existence in a Sobolev space setting. In the recent paper \cite{MilSanz21}, Millet and Sanz-Sol\'e established global existence results for SWEs with superlinear coefficients. We refer to Dalang's review article \cite{Dalang09} for more results on the properties of solutions to SWEs.
	
	In this work, we consider the SWE on the $d$-dimensional torus $\T^d$ for $d\geq 2$:
	\begin{equation}\label{eq-intro SWE}
		\partial_t^2 u=\Delta u+f(u)+\dot{W} \circ \nabla \Phi(u,\partial_t u),
	\end{equation}
	where $f(u)$ is the nonlinear term, assumed to be globally Lipschitz in $u$, the circle $\circ$ means Stratonovich multiplication, and $W=W(t,x)$ is the time-space noise of the form
	\begin{equation}\label{eq-noise}
		W(t,x)= \sum_{k\geq 1} \sigma_k(x) B_t^k.
	\end{equation}
	Here $\{B_t^k\}_{k}$ are independent Brownian motions and $\{\sigma_k \}_k$ are divergence-free spatial vector fields describing the space structure of noise. Finally, $\Phi(u,\partial_t u)=u$ or $\Phi(u,\partial_t u)=\partial_t u$ distinguishes whether the noise acts on the displacement or on the velocity.
	
	Equation \eqref{eq-intro SWE} generalizes the classical wave equation by including stochastic transport-type perturbations $\dot{W} \circ \nabla u$ or $\dot{W} \circ \nabla \partial_t u$, which model random advection and have been extensively studied in the context of fluid dynamics and transport equations. Early works \cite{BCF92, MR04, MR05} investigated stochastic Navier-Stokes equations with transport-type noise, while subsequent studies \cite{BFM16, FGP10, LangCri23} further explored its effects on linear transport equations or 2D Euler equations, particularly in the Stratonovich formulation, whose rigorous derivations were provided in \cite{FP21, FP22, DDHolm 15}. A remarkable discovery, initiated in Galeati \cite{Gal20}, is that under an appropriate rescaling of the spatial covariance, small-scale transport noise may generate an effective dissipative term in the limiting dynamics. This mechanism has since been shown to regularize a variety of models \cite{CL23, FGL21 JEE, FLL24, Luo21, PFH23, QS24}; see also \cite{FlanLuongo23} for an overview. In particular, such dissipative effects can suppress potential blow-up in nonlinear dynamics: for reaction–diffusion systems, multiplicative transport noise can ensure global existence or arbitrarily delay blow-up \cite{Agresti 24}, while for 3D Navier–Stokes equations on the torus, stochastic transport noise or deterministic transport-type perturbations can regularize the dynamics and yield global well-posedness even for large initial data \cite{Agr24b, FHLN22, FL21 PTRF}. These results demonstrate that small-scale stochastic fluctuations may have a significant impact on large-scale behaviour.
	
	Motivated by these developments in transport noise and classical nonlinear wave theory, we investigate how transport-type stochastic perturbations influence nonlinear wave propagation. It is worth mentioning that the large-scale behaviour depends sharply on whether the noise perturbs the displacement $u$ or the velocity $\partial_t u$. As we shall see, when the noise perturbs the displacement $u$, the structure of the wave equation is essentially preserved; in particular, the It\^o-Stratonovich correction vanishes, and no dissipative term appears.  By contrast, when the same family of noises acts on the velocity $\partial_t u$,  the It\^o-Stratonovich correction in the scaling limit of noises induces an additional term $\kappa \Delta \partial_t u$, which plays the role of a Laplacian damping acting on the time derivative. This structure closely resembles the dissipative operator appearing in Westervelt-type equations.
	
	The classical Westervelt equation, derived by Westervelt \cite{Westervelt1963} in the context of nonlinear acoustics, incorporates a quadratic nonlinearity while neglecting viscous and thermal losses:
	$$\partial_t^2 u=\mu_1 \Delta u+\mu_2 \partial_t^2(u^2).$$
	Beyond its mathematical interest, this class of nonlinear acoustic models has been widely employed in medical and industrial applications, such as diagnostic ultrasound, lithotripsy, ultrasound cleaning and sonochemistry; see, for example, \cite{AC1999,DKBR2000,FF IEEE,SVE IEEE}.
	Subsequent developments extended Westervelt's original formulation by systematically including dissipative mechanisms, such as viscosity and thermal conduction, thereby introducing a third-order temporal derivative term that accounts for acoustic diffusivity, see e.g. \cite{HB2024, NJ2016, SK2015}. In particular, Shevchenko and Kaltenbacher  \cite{SK2015} derived the generalized Westervelt equation from fundamental fluid dynamics, which has an additional term $\Delta \partial_t u$ to model acoustic damping:
	$$\partial_t^2 u=\mu_1 \Delta u+\mu_2 \partial_t^2(u^2)+\mu_3 \Delta \partial_t u.$$
	Formally letting $\mu_2 \rightarrow 0$ in the above equation recovers precisely the limiting equation obtained in our stochastic scaling analysis with the nonlinearity removed (see Theorem \ref{thm-scaling limit SWE2} below). This connection highlights an interesting fact: the damping term typically introduced from physical mechanisms such as viscosity or thermal conduction may also arise from stochastic perturbations of the wave dynamics.
	
	Before describing our main results, we introduce the notation used throughout the paper. For $s \in \mathbb{R}$, $H^s=H^s(\mathbb{T}^d)$ represents the inhomogeneous Sobolev space equipped with the norm
	$$	\|u\|_{H^s} = \big\|(I-\Delta)^{s/2} u\big\|_{L^2}= \Big( \sum_{j \in \mathbb{Z}^d} (1+|j|^2)^s |\hat u(j)|^2 \Big)^{1/2},$$
	where $\hat u(j)$ stands for the Fourier coefficient of $u$. We denote by $H^{-s}$ the dual space of $H^s$ and write $H^0=L^2$. In addition, the notation $\< \cdot, \cdot \>$ stands for the inner product in $L^2$, or the duality between elements in $H^s$ and $H^{-s}$. For a sequence $\theta=\{\theta_j \}_{j\in \Z^d}$ and $p\in [1,\infty]$, we denote $\|\theta\|_{\ell^p}$ the usual $\ell^p$-norm of $\theta$. We also write $a\lesssim b$ if $a\leq Cb$ for some constant $C>0$, and $a\lesssim_\gamma b$ when the dependence of $C$ on a parameter $\gamma$ needs to be emphasized. For simplicity, we will write $\sum_{k}$ for $\sum_{k \geq 1}$ in the sequel.

	\subsection{Well-posedness}\label{subs-well-posedness}
	In order to study the well-posedness results for two types of SWEs, we begin with specifying the assumptions imposed on the noises and the nonlinear term.
	\begin{assumption}\label{asp-1}
		(A1) We assume that the vector fields $\{\sigma_k\}_k$ in \eqref{eq-noise} are divergence-free and define the covariance function of noise as
		$$Q(x,y)=\E \big[W(1,x)\otimes W(1,y)\big]=\sum_{k} \sigma_{k}(x)\otimes \sigma_{k}(y).$$
		We assume in addition that the noise is space homogeneous and isotropic, namely
		\begin{equation}\label{eq-asp on Q}
			Q(x,y)=Q(x-y); \quad Q(x,x)=Q(0)=2\kappa I_d,
		\end{equation}
		where $I_d$ is the $d$-dimensional identity matrix and $\kappa>0$ is a constant.
		
		(A2) The nonlinear term $f(u)$ is assumed to be globally Lipschitz in $u$.
	\end{assumption}
	
	With these assumptions at hand, we first consider the Cauchy problem associated with \eqref{eq-intro SWE} in the case $\Phi(u,\partial_t u)=u$, namely,
	$$\partial_t^2 u=\Delta u+f(u)+\dot{W} \circ \nabla u, \quad \big(u(0),\partial_t u(0)\big)=(u_0, v_0).$$
	Letting $v=\partial_t u$ and using the expression of noise \eqref{eq-noise}, we obtain the equation for velocity variable:
	$$dv=\Delta u \, dt+f(u) \, dt+\sum_{k} \sigma_k \cdot \nabla u \circ dB_t^k.$$
	Since $du=v\, dt$, it is easy to see that the It\^o-Stratonovich correction vanishes:
	$$d \, \big[\sigma_k \cdot \nabla u, B^k\big]_t=0;$$
	now the equation \eqref{eq-intro SWE} with $\Phi(u,\partial_t u)=u$  reduces to the following It\^o form:
	\begin{equation}\label{eq-SWE}\tag{SWE 1}
		\left\{
		\begin{aligned}
			&\partial_t u=v,  \\
			& d v=\Delta u \, dt+f(u) \, dt+\sum_{k} \sigma_k \cdot \nabla u \, dB_t^k,\\
			&\big(u(0),v(0) \big)=(u_0,v_0).
		\end{aligned}
		\right.
	\end{equation}
	Below we will discuss this It\^o equation rather than the Stratonovich one, as the It\^o form is more convenient for our analysis and allows us to apply energy estimates and  It\^o formula directly.

	The following theorem establishes the pathwise well-posedness of system \eqref{eq-SWE}.
	\begin{theorem}\label{thm-SWE1 wellposedness}
		Given any $(u_0,v_0) \in H^1 \times L^2$ and $T>0$, equation \eqref{eq-SWE} admits a pathwise unique weak solution $(u,v)$ on the interval $[0,T]$.
		
		More precisely, given a filtered probability space $(\Omega,\mathcal{F},(\mathcal{F}_t),\P)$ and a family of independent $(\mathcal{F}_t)$-Brownian motions $\{B^k\}_k$ defined on $\Omega$, there exists a pathwise unique $(\mathcal{F}_t)$-progressively measurable process $(u,v)\in L^2(\Omega,L^\infty([0,T];H^1 \times L^2))$, with continuous trajectories in $C([0,T];L^2 \times H^{-1})$, $\P$-a.s., such that  for any test function $\phi\in C^\infty(\T^d)$, it holds $\P$-a.s. for all $t\in [0,T]$,
		\begin{equation}\label{eq-def of weak sol}
			\begin{split}
				\<u(t),\phi\>&=\< u_0, \phi\>+\int_{0}^{t} \<v(s),\phi\> \, ds, \\
				\<v(t),\phi\>&=\<v_0,\phi\>+\int_{0}^{t} \<u(s), \Delta \phi \> \, ds+\int_{0}^{t} \big\<f(u(s)),\phi\big\> \, ds-\sum_{k} \int_{0}^{t} \big\<u(s),\sigma_k \cdot \nabla \phi \big\> \, dB^k_s.
			\end{split}
		\end{equation}
		Furthermore, the weak solution $(u,v)$ satisfies the following uniform bound:
		$$\E \Big[ \|u\|_{L^\infty([0,T];H^1)}^2+\|v\|_{L^\infty([0,T];L^2)}^2\Big] \leq C_T \big(\|u_0\|_{H^1}^2+\|v_0\|_{L^2}^2+1\big).$$
	\end{theorem}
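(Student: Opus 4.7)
The plan is to establish existence through a Galerkin approximation combined with uniform energy estimates, and pathwise uniqueness via a direct energy estimate on the difference of two solutions.

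Let $\Pi_N$ denote the orthogonal projection in $L^2(\T^d)$ onto Fourier modes of order at most $N$, and consider the finite-dimensional truncation
\[
\partial_t u_N = v_N, \qquad dv_N = \Delta u_N\,dt + \Pi_N f(u_N)\,dt + \sum_{k\leq N}\Pi_N(\sigma_k\cdot\nabla u_N)\,dB^k_t,
\]
with initial data $(\Pi_N u_0,\Pi_N v_0)$. Since $f$ is globally Lipschitz by (A2) and the noise coefficients are linear in $u_N$, this SDE has globally Lipschitz coefficients and hence admits a unique global strong solution. The key a priori bound comes from applying It\^o's formula to the wave-type energy $\mathcal{E}_N(t) := \|u_N(t)\|_{H^1}^2 + \|v_N(t)\|_{L^2}^2$. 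Two cancellations are crucial: first, the Laplacian drift is killed by the identity $\langle\Delta u_N,v_N\rangle + \langle\nabla u_N,\nabla v_N\rangle=0$; second, by Assumption (A1),
\[
\sum_{k\leq N}\|\Pi_N(\sigma_k\cdot\nabla u_N)\|_{L^2}^2 \,\leq\, \int_{\T^d} Q^{ij}(0)\,\partial_i u_N\,\partial_j u_N\,dx \,=\, 2\kappa\|\nabla u_N\|_{L^2}^2,
\]
so the It\^o correction is bounded by a multiple of $\mathcal{E}_N$. The Lipschitz bound on $f$ controls the drift by $C(1+\mathcal{E}_N)$, and Burkholder-Davis-Gundy on the martingale combined with Gr\"onwall yields the stated uniform bound $\E[\sup_{t\leq T}\mathcal{E}_N(t)]\leq C_T(\|u_0\|_{H^1}^2+\|v_0\|_{L^2}^2+1)$.

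For the passage to the limit, I would extract a subsequence converging weakly-$\star$ in $L^2(\Omega;L^\infty([0,T];H^1\times L^2))$. A uniform time-regularity estimate on $v_N$ in some $C^{\alpha}([0,T];H^{-1})$, obtained from the equation and BDG, together with the Aubin-Lions lemma, yields strong convergence of $u_N$ in $L^2(\Omega\times[0,T];L^2)$, which identifies $\Pi_N f(u_N)\to f(u)$ in the limit; since $u_N\mapsto \sigma_k\cdot\nabla u_N$ is linear, weak $L^2(\Omega\times[0,T];L^2)$ convergence of $\sigma_k\cdot\nabla u_N$ already suffices to pass to the limit in the stochastic integral term. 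The continuity of trajectories in $C([0,T];L^2\times H^{-1})$ then follows from the equation itself: $u(t)=u_0+\int_0^t v\,ds$ is Lipschitz into $L^2$, while $v$ is an It\^o process with drift in $H^{-1}$ and martingale part with values in $L^2$.

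For pathwise uniqueness, the difference $(w,z):=(u^1-u^2,v^1-v^2)$ satisfies the same SPDE with zero initial data and drift $f(u^1)-f(u^2)$. Repeating the energy estimate (now without the inhomogeneous ``$+1$'' contribution from $f$) and invoking the Lipschitz property of $f$ yields
\[
\E\bigl[\|w(t)\|_{H^1}^2+\|z(t)\|_{L^2}^2\bigr]\lesssim \int_0^t \E\bigl[\|w(s)\|_{H^1}^2+\|z(s)\|_{L^2}^2\bigr]\,ds,
\]
so Gr\"onwall forces $(w,z)\equiv 0$. The main technical obstacle is the limit passage for the stochastic integral: weak compactness at the $H^1\times L^2$ level alone does not identify nonlinear limits in $\nabla u_N$, but the crucial point is that the noise enters linearly in $u$, so the weak limit of $\sigma_k\cdot\nabla u_N$ suffices, while only the Lipschitz nonlinearity $f$ demands strong convergence, which Aubin-Lions provides.
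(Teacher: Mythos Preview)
Your proposal diverges from the paper in two places, and the second contains a genuine gap.

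\textbf{Existence.} The paper does \emph{not} use compactness for \eqref{eq-SWE}; it shows directly that the Galerkin sequence is Cauchy in $L^2\big(\Omega;C([0,T];L^2\times H^{-1})\big)$ by controlling the lower-order energy $\mathcal{N}_{m,n}(t)=\|u_m-u_n\|_{L^2}^2+\|v_m-v_n\|_{H^{-1}}^2$. This yields a probabilistically strong limit on the \emph{given} filtered space with no Skorohod step. Your compactness route is in principle viable, but the claim that ``Aubin--Lions yields strong convergence of $u_N$ in $L^2(\Omega\times[0,T];L^2)$'' is not justified as written: Aubin--Lions is a deterministic result and gives only tightness of laws, after which one must pass to a new probability space and then invoke pathwise uniqueness plus Yamada--Watanabe to return to a strong solution. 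The paper's Cauchy-sequence argument bypasses all of this; see also Remark~\ref{rmk-wellposedness}.

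\textbf{Uniqueness.} Here there is a real regularity obstruction. You propose to run the energy estimate on $\|w\|_{H^1}^2+\|z\|_{L^2}^2$, but It\^o's formula for $\|z\|_{L^2}^2$ cannot be applied at the available regularity: the drift of $z$ contains $\Delta w\in H^{-1}$, and the pairing $\langle z,\Delta w\rangle$ (equivalently $\langle \nabla w,\nabla z\rangle$) is undefined when $z$ is merely in $L^2$. The variational It\^o formula in the triple $H^1\subset L^2\subset H^{-1}$ would require $z\in L^2_tH^1$, which you do not have. The paper fixes this by dropping one derivative and working with $\mathcal{N}(t)=\|w\|_{L^2}^2+\|z\|_{H^{-1}}^2$: it verifies explicitly the hypotheses of the Krylov--Rozovsky It\^o formula for $\|(I-\Delta)^{-1/2}z\|_{L^2}^2$ (now $(I-\Delta)^{-1/2}z\in L^2_tH^1$ and the drift lies in $L^2_tH^{-1}$), uses the cancellation $\langle w,z\rangle+\langle(I-\Delta)^{-1}z,\Delta w\rangle=\langle(I-\Delta)^{-1}z,w\rangle$, and bounds the It\^o correction by $\sum_k\|(I-\Delta)^{-1/2}(\sigma_k\cdot\nabla w)\|_{L^2}^2\lesssim_\kappa\|w\|_{L^2}^2$. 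Your $H^1\times L^2$ computation is correct for the finite-dimensional Galerkin system, but it does not transfer to the weak solutions themselves.
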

	We remark that the solution obtained above is weak in the analytical sense, whereas it is probabilistically strong and can therefore be defined on any given probability space.
	
	Having established the well-posedness of the first model, we now turn to the Cauchy problem for a different type of stochastic perturbation, in which the noise acts on $\partial_t u$:
	$$\partial_t^2 u=\Delta u+f(u)+\dot{W} \circ \nabla \partial_t u, \quad \big(u(0),\partial_t u(0)\big)=(u_0, v_0).$$
	Actually, we can also let $\partial_t u=v$ and get the first-order Stratonovich equation for velocity variable $v$; then we utilize the assumption \eqref{eq-asp on Q} to transform it into the It\^o form with an additional Laplacian term. Now it reads as
	\begin{equation}\label{eq-SWE2}\tag{SWE 2}
		\left\{
		\begin{aligned}
			&\partial_t u=v,  \\
			& d v=\Delta u \, dt+f(u) \, dt+\sum_{k} \sigma_k \cdot \nabla v \, dB_t^k+\kappa \Delta v \, dt,\\
			&\big(u(0),v(0) \big)=(u_0,v_0).
		\end{aligned}
		\right.
	\end{equation}
	For system \eqref{eq-SWE2}, we can establish the existence of weak solutions whose precise meaning is given as follows.
	
	\begin{definition}\label{def-SWE2 weak solution}
		Let $(u_0,v_0) \in H^1 \times L^2$ and $T>0$. We say that \eqref{eq-SWE2} has a weak solution if there exist a filtered probability space $(\Omega,\mathcal{F},(\mathcal{F}_t),\P)$, a family of independent $(\mathcal{F}_t)$-Brownian motions $\{B^k\}_k$ and a pair of stochastic processes $(u,v)$ defined on $\Omega$, such that
		\begin{itemize}
			\item $(u,v)$ is $(\mathcal{F}_t)$-progressively measurable and belongs to
			$ L^2\big(\Omega;L^\infty([0,T];H^1 \times L^2)\big)$;
			\item for any test function $\phi\in C^\infty(\T^d)$ and all $t\in [0,T]$, the pair $(u,v)$ satisfies the following weak formulation $\P$-a.s.,
			\begin{equation}\label{eq-SWE2 sol def}
				\begin{split}
					\<u(t),\phi\>&=\< u_0, \phi\>+\int_{0}^{t} \<v(s),\phi\> \, ds, \\
					\<v(t),\phi\>&=\<v_0,\phi\>+\int_{0}^{t} \<u(s), \Delta \phi \> \, ds+\int_{0}^{t} \big\<f(u(s)),\phi\big\> \, ds+\kappa \int_{0}^{t} \<v(s), \Delta \phi \> \, ds\\
					&\quad-\sum_{k} \int_{0}^{t} \big\<v(s),\sigma_k \cdot \nabla \phi \big\> \, dB^k_s.
				\end{split}
			\end{equation}
		\end{itemize}
	\end{definition}
	With this definition at hand, we can now state a weak existence result for \eqref{eq-SWE2}.
	\begin{theorem}\label{thm-SWE2 wellposedness}
		For any initial value $(u_0,v_0) \in H^1 \times L^2$ and $T>0$, there exists a  weak solution $(u,v)$ on the time interval $[0,T]$ in the sense of Definition \ref{def-SWE2 weak solution}. Moreover, the solution satisfies the uniform bound $\P$-a.s.,
		$$\|u\|_{L^\infty([0,T];H^1)} \vee \|v\|_{L^\infty([0,T];L^2)} \leq C_{ub},$$
		where $C_{ub}>0$ is a finite constant depending on $T$, $\|u_0\|_{H^1}$ and $\|v_0\|_{L^2}$.
	\end{theorem}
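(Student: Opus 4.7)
The plan is to combine a Galerkin-type approximation with a standard tightness-Skorokhod scheme, which is natural since Definition \ref{def-SWE2 weak solution} only requires a probabilistic-weak (martingale) solution. Let $\Pi_N$ denote the orthogonal projection of $L^2(\T^d)$ onto the span of the first $N$ Fourier modes and consider the finite-dimensional SDE obtained from \eqref{eq-SWE2} by projecting every term of the system; since $f$ is globally Lipschitz, this yields a unique global strong solution $(u^N,v^N)$ on the original probability space.

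The central ingredient is a pathwise uniform energy estimate that rests on two structural cancellations. Applying It\^o's formula to $E^N(t):=\tfrac12\|v^N(t)\|_{L^2}^2+\tfrac12\|\nabla u^N(t)\|_{L^2}^2$, the cross terms $\<v^N,\Delta u^N\>$ and $\<\nabla u^N,\nabla v^N\>$ produced by the $\Delta u$-drift and by $\partial_t u=v$ cancel after integration by parts. The martingale part of $dE^N$ vanishes identically, because each $\sigma_k$ is divergence-free, so that $\int_{\T^d}v^N(\sigma_k\cdot\nabla v^N)\,dx=\tfrac12\int_{\T^d}\sigma_k\cdot\nabla(v^N)^2\,dx=0$. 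Moreover, the It\^o correction $\tfrac12\sum_k\|\sigma_k\cdot\nabla v^N\|_{L^2}^2$ equals $\kappa\|\nabla v^N\|_{L^2}^2$ by the isotropy identity $\sum_k\sigma_k(x)\otimes\sigma_k(x)=2\kappa I_d$ from Assumption \ref{asp-1}(A1), and is therefore exactly absorbed by the damping contribution $\kappa\<v^N,\Delta v^N\>=-\kappa\|\nabla v^N\|_{L^2}^2$. What remains is the purely deterministic inequality $\frac{d}{dt}E^N\le\<v^N,f(u^N)\>$, which, combined with the Lipschitz bound on $f$, the identity $du^N=v^N\,dt$, and Gronwall's lemma, yields an $N$-independent pathwise bound $\|u^N\|_{L^\infty_tH^1}\vee\|v^N\|_{L^\infty_tL^2}\le C_{ub}$ depending only on $T$, $\|u_0\|_{H^1}$ and $\|v_0\|_{L^2}$.

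With this uniform bound in hand, tightness of the laws of $\{(u^N,v^N)\}$ follows from standard compactness-regularity arguments: the bound on $\partial_t u^N=v^N$ in $L^\infty([0,T];L^2)$ together with $u^N\in L^\infty([0,T];H^1)$ yields tightness in $C([0,T];L^2)$ via Arzel\`a-Ascoli and Rellich's compact embedding, while for $v^N$ one splits the equation into drift and martingale parts, estimating the drift in $C^1([0,T];H^{-2})$ and the martingale part in $C^{\alpha}([0,T];H^{-s})$ for suitable $\alpha,s>0$ by Burkholder-Davis-Gundy, giving tightness of $v^N$ in $C([0,T];H^{-s})$. Skorokhod's (or Jakubowski's) representation theorem then produces $\P$-a.s. convergent versions $(\tilde u^N,\tilde v^N,(\tilde B^{k,N})_k)\to(\tilde u,\tilde v,(\tilde B^k)_k)$ on a new probability space; passing to the limit is straightforward for the linear terms and, thanks to strong convergence of $\tilde u^N$ in $C([0,T];L^2)$, for the Lipschitz nonlinearity $f(\tilde u^N)$, while the pathwise bound is inherited by lower semicontinuity. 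The main technical obstacle is the identification of the stochastic integral in the limit: since the energy estimate provides no uniform $H^1$-control on $v^N$, the gradient must remain on the smooth test function $\phi$ in the weak formulation \eqref{eq-SWE2 sol def}, and a Bensoussan-Debussche-type martingale identification lemma, combined with strong convergence of $\tilde v^N$ in $L^2([0,T];H^{-\varepsilon})$, is needed to conclude that $\sum_k\int_0^\cdot \<\tilde v,\sigma_k\cdot\nabla\phi\>\,d\tilde B^k_s$ is indeed the correct limit.
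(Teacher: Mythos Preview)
Your proposal is correct and follows essentially the same route as the paper: Galerkin approximation, a pathwise energy estimate built on the same two structural cancellations (vanishing of the martingale by divergence-freeness, and absorption of the It\^o correction by the damping $\kappa\Delta v$), then tightness--Skorokhod and passage to the limit in the weak formulation. The only differences are cosmetic: the paper uses the inhomogeneous energy $\|u_n\|_{H^1}^2+\|v_n\|_{L^2}^2$ and obtains time-regularity of $v_n$ via a $W^{1/3,4}([0,T];H^{-\rho})$ bound (Lemma~\ref{lemma-fractional sobolev estimate}) rather than H\"older estimates, and note that with the projection $\Pi_N$ the It\^o correction is only $\le\kappa\|\nabla v^N\|_{L^2}^2$ rather than equal, which of course still suffices.
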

	
	We briefly discuss the main ideas of proofs for the two theorems above and explain how our approach differs from the standard strategies in the SPDE literature.
	\begin{remark}\label{rmk-wellposedness}
		A standard approach in the well-posedness theory of SPDEs is to first construct a probabilistically weak solution via compactness arguments, and then establish pathwise uniqueness, thereby promoting it to a probabilistically strong solution.
		
		In the case of the first system \eqref{eq-SWE}, we adopt an alternative way: we directly construct a sequence of approximate solutions that is Cauchy in a suitable space, without relying on compactness methods. This yields a probabilistically strong solution which is also pathwise unique.
		By contrast, for the second model \eqref{eq-SWE2}, our analysis follows the classical compactness framework. While this allows us to construct a probabilistically weak solution, we are currently unable to establish pathwise uniqueness in this setting. As a consequence, the solution obtained for \eqref{eq-SWE2} remains probabilistically weak.
	\end{remark}

	\subsection{Scaling limits}
	
	We now turn to the scaling limit of these two models, focusing on the behaviour of the solutions as the noise becomes asymptotically uncorrelated at different space points. Throughout the analysis of scaling limit, we impose the following assumption.
	\begin{assumption}\label{asp-2}
		For every $N\ge 1$, let $\{\sigma^N_k\}_{k\geq 1}$ be a family of divergence-free vector fields and  $Q^N$ the associated covariance function. We assume that for each $N\geq 1$, $Q^N$ satisfies (A1) in Assumption \ref{asp-1} with a constant $\kappa>0$ independent of $N$. We also require that
		\begin{equation}\label{eq-asp on theta}
			\lim_{N\rightarrow\infty} \big\|Q^N\big\|_{L^1} =0.
		\end{equation}
	\end{assumption}
	
	Then for each $N\geq 1$, we consider equation \eqref{eq-intro SWE} driven by a sequence of noises
	$$W^N(t,x)=\sum_{k} \sigma_k^N(x) B_t^k.$$
	To emphasize the dependence on $N$, we denote the corresponding solutions by $u^N$, and set $v^N=\partial_t u^N$. As in Section \ref{subs-well-posedness}, we distinguish two cases: $\Phi(u^N,\partial_t u^N)=u^N$ and $\Phi(u^N,\partial_t u^N)=\partial_t u^N$.
	For simplicity, we always take $(u^N(0),v^N(0))\equiv (u_0,v_0)$ for all $N\geq 1$.
	
	We start with the first system, i.e.,
	\begin{equation}\label{eq-scaling limit}
		\left\{
		\begin{aligned}
			&\partial_t u^N=v^N,  \\
			& d v^N=\Delta u^N \, dt+ f(u^N) \, dt+\sum_{k}  \sigma_k^N \cdot \nabla u^N \, dB_t^k.
		\end{aligned}
		\right.
	\end{equation}
	By Theorem \ref{thm-SWE1 wellposedness}, for any $N\geq 1$, system \eqref{eq-scaling limit} has a pathwise unique solution $\{u^N_t\}_{t\in [0,T]}$ with uniform bound
	$$\E \, \|u^N\|_{L^\infty([0,T];H^1)}^2 \leq C_T \big(\|u_0\|_{H^1}^2+\|v_0\|_{L^2}^2+1\big).$$
	Under Assumption \ref{asp-2}, we have the following scaling limit result:
	\begin{theorem}\label{thm-scaling limit SWE1}
		Given $(u_0,v_0) \in H^1\times L^2$ and time $T>0$, the solutions $\{u^N\}_{N\geq 1}$ to equations \eqref{eq-scaling limit}  converge to the unique weak solution $\bar{u}$ of the following deterministic wave equation as $N\rightarrow \infty$:
		\begin{equation}\label{eq-limit wave equation}
			\partial_t^2 \bar{u}=\Delta \bar{u}+f(\bar{u}), \quad \big(\bar{u}(0),\partial_t \bar{u}(0) \big)=(u_0,v_0).
		\end{equation}
		Furthermore,  the following strong convergence result holds for any $\gamma \in (0,\frac{1}{2})$:
		$$\lim_{ N\rightarrow \infty} \E \, \big\|u^N-\bar{u}\big\|_{C([0,T];H^{1-\gamma})}^2=0.$$
	\end{theorem}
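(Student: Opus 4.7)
The plan is a standard three-step compactness argument combined with a quantitative vanishing of the stochastic term: establish tightness of $\{\mathrm{Law}(u^N)\}$ on $C([0,T]; H^{1-\gamma})$, prove that the transport-noise martingale vanishes in $L^2(\Omega)$ under Assumption \ref{asp-2}, and identify any subsequential limit as a weak solution of \eqref{eq-limit wave equation}. Uniqueness of the deterministic problem together with the fact that the limit is a Dirac mass will upgrade convergence in law to convergence in probability, and finally to $L^2(\Omega)$ convergence by interpolation against the uniform $H^1$ bound.

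For tightness, Theorem \ref{thm-SWE1 wellposedness} furnishes the uniform bound $\sup_N \E[\|u^N\|_{L^\infty H^1}^2 + \|v^N\|_{L^\infty L^2}^2] \le C$, and the relation $\partial_t u^N = v^N$ places $\{u^N\}$ in the bounded subset of $L^\infty([0,T]; H^1) \cap W^{1,\infty}([0,T]; L^2)$ (in expectation), which embeds compactly into $C([0,T]; H^{1-\gamma})$ for any $\gamma>0$ by an Aubin--Lions/Ascoli--Arzelà argument for Hilbert-valued functions. Standard Markov-type estimates then yield tightness on this path space. Extracting a subsequence and, if needed, invoking a Skorokhod representation, we obtain an a.s. limit $u^\infty$.

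The crucial step, and the one I expect to be the main obstacle, is proving that for every smooth test function $\phi$ the martingale
\[
M^N_t(\phi) := \sum_k \int_0^t \big\langle u^N(s),\,\sigma_k^N \cdot \nabla \phi \big\rangle\, dB^k_s
\]
tends to zero in $L^2(\Omega; C([0,T]))$. Writing $F^N := u^N\, \nabla \phi$, the quadratic variation equals
\[
\int_0^t \!\!\iint_{\T^d \times \T^d} F^N(x)^{T} Q^N(x-y)\, F^N(y)\, dx\, dy\, ds,
\]
which, via Plancherel on $\T^d$ and the elementary bound $|\widehat{Q^N}(\xi)| \lesssim \|Q^N\|_{L^1}$ (uniformly in $\xi \in \Z^d$), is controlled by $\|Q^N\|_{L^1}\, \|F^N\|_{L^2}^2 \lesssim \|Q^N\|_{L^1}\,\|\nabla\phi\|_\infty^2\, \|u^N\|_{L^2}^2$. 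Combining with Doob's inequality and the uniform $L^\infty L^2$ bound,
\[
\E\,\sup_{t\le T} |M^N_t(\phi)|^2 \;\lesssim\; T\,\|Q^N\|_{L^1}\,\|\nabla\phi\|_\infty^2\, \E\,\|u^N\|_{L^\infty L^2}^2 \;\longrightarrow\; 0
\]
by \eqref{eq-asp on theta}. This is the Galeati-type decorrelation estimate underlying the whole analysis.

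Passing to the limit in the weak formulation \eqref{eq-def of weak sol} is then routine: the linear terms pass by strong convergence of $u^N$ in $C([0,T]; L^2)$, the nonlinearity by Lipschitz continuity of $f$, and the stochastic term vanishes by the bound above. Hence $u^\infty$ is a weak solution of \eqref{eq-limit wave equation}; since that deterministic equation has a unique solution $\bar u$ in the energy class (classical under Assumption \ref{asp-1}(A2)), we obtain $u^\infty = \bar u$. A standard subsequence argument then gives $u^N \to \bar u$ in law, and, $\bar u$ being deterministic, also in probability on $C([0,T]; H^{1-\gamma})$. Finally, the interpolation
\[
\|u^N - \bar u\|_{C([0,T]; H^{1-\gamma})} \;\le\; \|u^N - \bar u\|_{C([0,T]; L^2)}^{\gamma}\, \|u^N - \bar u\|_{C([0,T]; H^1)}^{1-\gamma}
\]
combined with Hölder's inequality in $\omega$ with exponents $1/\gamma$ and $1/(1-\gamma)$, the convergence in probability of the first factor, and the uniform moment bound on the second factor (strengthening Theorem \ref{thm-SWE1 wellposedness} to $L^p(\Omega)$, $p>2$, if necessary by repeating its proof), delivers the strong convergence for every $\gamma \in (0, 1/2)$.
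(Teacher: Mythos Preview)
Your proposal is correct and follows the same compactness-plus-Skorokhod strategy as the paper: uniform energy bounds give tightness, the martingale term vanishes because its quadratic variation is controlled by $\|Q^N\|_{L^1}$ (your Plancherel bound $|\widehat{Q^N}|\le\|Q^N\|_{L^1}$ is equivalent to the paper's use of Young's convolution inequality), and uniqueness of the deterministic limit upgrades convergence in law to convergence in probability.

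Two small points where you diverge from the paper. First, the paper establishes tightness of the \emph{pair} $(u^N,v^N)$ on $C([0,T];H^{1-\gamma}\times H^{-\gamma})$, supplementing the energy bound with a fractional time-regularity estimate for $v^N$ in $W^{1/3,4}([0,T];H^{-\rho})$; you only argue tightness of $u^N$, so to pass to the limit in the first-order weak formulation \eqref{eq-def of weak sol} (which involves $\langle v^N(t),\phi\rangle$) you would either need to add this step or rewrite the system in time-integrated second-order form so that $v^N$ disappears. Second, for the final $L^2(\Omega)$-upgrade the paper truncates on $\{\|u^N-\bar u\|>\varepsilon\}$ directly, whereas you interpolate between $L^2$ and $H^1$ and apply H\"older in $\omega$; both arguments are valid and both ultimately rely on the $L^p(\Omega)$-strengthening of Theorem~\ref{thm-SWE1 wellposedness} that you correctly flag.
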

	Here we only present a qualitative convergence result without giving a convergence rate. In fact, one may try to express the solution $u^N$ of \eqref{eq-scaling limit} and the solution $\bar{u}$ to limit equation \eqref{eq-limit wave equation} in mild form through the Green function of the deterministic wave operator, whose explicit form depends on the spatial dimension. Then quantitative convergence rate might be established in some suitable space by following the approach in \cite{FLD quantitative}. Since this analysis involves additional technicalities beyond the main scope of the present work, here we omit it.

	Next, we turn to the second system, in which the stochastic perturbation acts on the velocity $\partial_t u^N$ and an additional term $\kappa \Delta v^N$ is present in the It\^o form:
	\begin{equation}\label{eq-scaling limit SWE2}
		\left\{
		\begin{aligned}
			&\partial_t u^N=v^N,  \\
			& d v^N=\Delta u^N \, dt+ f(u^N) \, dt+\kappa \Delta v^N \, dt+\sum_k  \sigma_k^N \cdot \nabla v^N \, dB_t^k.
		\end{aligned}
		\right.
	\end{equation}
	The presence of this extra Laplacian term leads to its appearance in the limiting equation as well. Recall from Theorem \ref{thm-SWE2 wellposedness} that the sequence $\{(u^N,v^N)\}_{N}$ satisfies the uniform bound
	\begin{equation}\label{eq-asp on v ub}
		\P\text{-a.s.,}\quad \sup_{N\geq 1} \Big(\|u^N\|_{L^\infty([0,T];H^1)} \vee \|v^N\|_{L^\infty([0,T];L^2)} \Big)\leq C_{ub}.
	\end{equation}
	Under Assumption \ref{asp-2}, we now state the corresponding scaling limit result.
	
	\begin{theorem}\label{thm-scaling limit SWE2}
		Suppose $(u_0,v_0) \in H^1\times L^2$, then the solutions $\{u^N\}_{N\geq 1}$ of equations \eqref{eq-scaling limit SWE2}  converge, as $N\rightarrow \infty$, to the unique weak solution $\bar{u}$ of the following equation:
		$$\partial_t^2 \bar{u}=\Delta \bar{u}+f(\bar{u})+\kappa \Delta \partial_t \bar{u}, \quad \big(\bar{u}(0),\partial_t \bar{u}(0) \big)=(u_0,v_0).$$
		If we further assume $f\in C_b^2(\R)$, then for $a \in (0,\frac{1}{2})$ and $\epsilon \in (0,a]$, the following quantitative estimate holds for $d=2,3$:
		$$\E \, \|u^N-\bar{u}\|_{C([0,T];H^{-a}(\T^d))} \lesssim_{T,\kappa} C_{ub} \big\|Q^N\big\|_{L^1}^\frac{a-\epsilon}{d}.$$
	\end{theorem}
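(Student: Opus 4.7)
My plan is to split the proof into a qualitative compactness argument for the convergence statement and a quantitative energy estimate for the rate. For the qualitative part, I would combine the pathwise uniform bound \eqref{eq-asp on v ub} with time regularity extracted directly from the weak formulation \eqref{eq-SWE2 sol def} to deduce tightness of the laws of $(u^N,v^N)$ in $C([0,T];H^{-\delta})\times L^2([0,T];H^{-\delta})$ for some small $\delta>0$ via an Aubin--Lions-type criterion; Prokhorov and the Skorokhod representation theorem then furnish, along a subsequence and on a new probability space, almost-sure limits $(\bar u,\bar v)$ with $\bar v=\partial_t \bar u$. The crucial observation is that the stochastic integral in \eqref{eq-SWE2 sol def} vanishes in $L^2(\Omega)$: its variance equals
\begin{equation*}
\E\int_0^t\!\!\int_{\T^d}\!\!\int_{\T^d} v^N(x)\,v^N(y)\,\nabla\phi(x)^\top Q^N(x-y)\nabla\phi(y)\,dx\,dy\,ds,
\end{equation*}
which by Young's convolution inequality is bounded by $T\,C_{ub}^2\,\|\nabla\phi\|_{L^\infty}^2\,\|Q^N\|_{L^1}\to 0$. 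The Lipschitz property of $f$ together with strong convergence in the negative Sobolev topology handles $\langle f(u^N),\phi\rangle$, while the linear terms pass to the limit by weak convergence. Uniqueness of weak solutions of the limiting damped wave equation---classical when $\kappa>0$ and $f$ is Lipschitz---upgrades subsequential convergence to convergence of the whole family.

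For the quantitative rate, set $w^N:=u^N-\bar u$ and $z^N:=v^N-\partial_t\bar u$; this pair satisfies
\begin{equation*}
\partial_t w^N=z^N,\qquad dz^N=\Delta w^N\,dt+\kappa\Delta z^N\,dt+\bigl(f(u^N)-f(\bar u)\bigr)dt+\sum_k \sigma_k^N\!\cdot\!\nabla v^N\,dB^k_t.
\end{equation*}
I would apply It\^o's formula to an energy of the form $\mathcal{E}^N(t):=\|z^N\|_{H^{-a}}^2+\|w^N\|_{H^{1-a}}^2$, with homogeneous pairings chosen so that the $\Delta w^N$ contribution in the $z^N$ equation exactly balances $\tfrac{d}{dt}\|w^N\|_{H^{1-a}}^2$; the parabolic damping $-2\kappa\|\nabla z^N\|_{H^{-a}}^2$ then absorbs the It\^o correction arising from the martingale, and the hypothesis $f\in C_b^2$ is used to bound $\|f(u^N)-f(\bar u)\|_{H^{-a}}\lesssim\|w^N\|_{H^{-a}}$ via the Taylor identity $f(u^N)-f(\bar u)=\bigl(\int_0^1 f'(\bar u+\tau w^N)d\tau\bigr)w^N$ together with the fact that multiplication by a $C_b^1$ function preserves $H^a$ for $a\in(0,1)$. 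A Gronwall argument on $\mathcal{E}^N$ combined with BDG then reduces the entire estimate to bounding the quadratic variation of the martingale.

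The main obstacle, and the place where the dimension restriction $d\le 3$ and the exponent $(a-\epsilon)/d$ both enter, is the estimate
\begin{equation*}
\sum_k \|\sigma_k^N\cdot\nabla v^N\|_{H^{-a-1}}^2 \lesssim \|v^N\|_{L^2}^2\,\|Q^N\|_{L^1}^{2(a-\epsilon)/d}.
\end{equation*}
Expanding the sum in Fourier coordinates identifies it as a bilinear form in $\widehat{v^N}$ with kernel $\widehat{Q^N}$; the bound then follows by interpolating between the small quantity $\|Q^N\|_{L^1}$ and the automatic control $\|Q^N\|_{L^\infty}\le Q^N(0)=2\kappa d$ available from positive definiteness of $Q^N$, and by invoking the Sobolev embedding $H^{d/2+\epsilon}\hookrightarrow L^\infty$. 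The latter embedding, used in conjunction with the $H^1$ regularity of $v^N$, closes only for $d\le 3$ and produces the exponent $(a-\epsilon)/d$ after a square root. Plugging this estimate into the Gronwall bound for $\mathcal{E}^N$ and using the embedding $H^{1-a}\hookrightarrow H^{-a}$ yields the claimed bound on $\E\,\|u^N-\bar u\|_{C([0,T];H^{-a})}$.
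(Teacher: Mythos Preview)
Your qualitative compactness argument is essentially the paper's and is fine. The quantitative part, however, has a genuine gap: the absorption step does not work.

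When you apply It\^o's formula to $\|z^N\|_{H^{-a}}^2$, the It\^o correction is $\sum_k\|\sigma_k^N\cdot\nabla v^N\|_{H^{-a}}^2$, which involves the \emph{full} velocity $v^N$, whereas the damping term $2\kappa\langle(I-\Delta)^{-a}z^N,\Delta z^N\rangle\sim-2\kappa\|z^N\|_{H^{1-a}}^2$ involves only the difference $z^N$. These cannot be matched: even after writing $v^N=z^N+\bar v$, the piece $\sum_k\|\sigma_k^N\cdot\nabla z^N\|_{H^{-a}}^2$ is not absorbed by $2\kappa\|z^N\|_{H^{1-a}}^2$ for fractional $a\in(0,1)$, because the pointwise identity $Q^N(0)=2\kappa I_d$ yields exact cancellation only at the $L^2$ level, and the commutator between $(I-\Delta)^{-a/2}$ and multiplication by $\sigma_k^N$ destroys this balance. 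This is precisely the obstruction that already prevented pathwise uniqueness for \eqref{eq-SWE2} (Remark~\ref{rmk-not unique}) and the Cauchy property of the Galerkin sequence (Remark~\ref{rmk-SWE2 not Cauchy}); it reappears here unchanged. Two further slips: you invoke ``the $H^1$ regularity of $v^N$'', but only $v^N\in L^\infty_tL^2_x$ is available; and the multiplier $\int_0^1 f'(\bar u+\tau w^N)\,d\tau$ is not $C_b^1$ in $x$---its gradient involves $\nabla u^N,\nabla\bar u\in L^2$---so the nonlinear term must be handled by the Sobolev product inequality $\|gh\|_{H^{s_1+s_2-d/2}}\lesssim\|g\|_{H^{s_1}}\|h\|_{H^{s_2}}$, and \emph{that} is where $d\le 3$ actually enters (cf.\ Lemma~\ref{lemma-Cb2 f}).

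The paper avoids the absorption problem altogether by abandoning the energy method and passing to the mild formulation via $P_t=e^{\kappa t\Delta}$: with $Z^N_t=\sum_k\int_0^t P_{t-s}(\sigma_k^N\cdot\nabla v^N_s)\,dB^k_s$ the entire stochastic contribution is isolated in $Z^N$, and the smoothing $\|P_tg\|_{H^{\tau+\rho}}\lesssim(\kappa t)^{-\rho/2}\|g\|_{H^\tau}$ compensates the derivative in $\sigma_k^N\cdot\nabla v^N$ even though $v^N$ is merely $L^2$. Interpolating the resulting bounds $\E\sup_t\|Z^N_t\|_{H^{-\epsilon}}^2=O(1)$ and $\E\sup_t\|Z^N_t\|_{H^{-d/2-\epsilon}}^2=O(\|Q^N\|_{L^1})$ (Lemma~\ref{lemma-stochastic convolution}, Corollary~\ref{coro-stochastic convolution}) produces the exponent $(a-\epsilon)/d$; a Gronwall argument on $\int_0^T\|v^N_t-\bar v_t\|_{H^{-a}}^2\,dt$, using the maximal regularity of Lemma~\ref{lemma-semigroup property} for the remaining deterministic convolutions, then closes the estimate.
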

	
	The above estimate provides a bound in $C([0,T];H^{-a})$ for $a \in (0, \tfrac{1}{2})$, which is sufficiently small by the assumption \eqref{eq-asp on theta}. Since the sequence $\{u^N\}_N$ is uniformly bounded in $L^\infty([0,T];H^1)$ $\P$-a.s. by \eqref{eq-asp on v ub}, and a similar bound holds for $\bar{u}$ by analogous arguments. It then follows from interpolation that the estimate can be upgraded to the space $C([0,T];H^{1-})$. We note that our quantitative result is stated for the cases $d=2,3$; see Remark \ref{rmk-dimension restrction} for an explanation.
	
	\begin{remark}\label{rmk-scaling limit}
		The limit equation obtained above can also be written as a first-order system
		\begin{equation}\label{eq-limit WE2}
			\left\{
			\begin{aligned}
				&\partial_t \bar{u}=\bar{v},  \\
				& \partial_t \bar{v}=\Delta \bar{u}+ f(\bar{u}) +\kappa \Delta \bar{v}.
			\end{aligned}
			\right.
		\end{equation}
		The presence of the Laplacian term $\Delta \bar{v}$ in the second equation, and correspondingly the term $\Delta v^N$ in the second equation of \eqref{eq-scaling limit SWE2}, allow us to express both $\bar{v}$ and $v^N$ in mild form via the associated heat kernel. Then we can exploit standard analytic properties of the heat kernel (such as Lemmas \ref{lemma-semigroup prop 2} and \ref{lemma-semigroup property} below) to derive quantitative estimates.
	\end{remark}

	\subsection{Organization of the paper}
	The remainder of this paper is organized as follows. In Section \ref{sec-preliminaries}, we give an example of noise satisfying Assumption \ref{asp-1}-(A1)  and derive the corresponding estimates for the covariance function. We also collect several analytical preliminaries, including estimates for the stochastic convolution. Sections \ref{sec-wellposedness for SWE1} and \ref{sec-prf of SWE2 wellposed} are devoted to the proofs of Theorems \ref{thm-SWE1 wellposedness} and \ref{thm-SWE2 wellposedness}, respectively.	Finally, the scaling limit results for the systems \eqref{eq-scaling limit} and \eqref{eq-scaling limit SWE2} are proved in Sections \ref{sec-scaling limit SWE1} and \ref{sec-scaling limit SWE2}.

	\section{Preliminaries}\label{sec-preliminaries}
	We first present an example of divergence-free vector fields in any dimension $d\geq 2$.
	\begin{example}\label{eg-noise}
		The divergence-free vector fields can be taken as
		$$\sigma_{k,i}(x)= \theta_k a_{k,i} e_{k}(x), \quad x\in \T^d, \,  k\in \Z_0^d, \, i=1,2,\ldots, d-1.$$
		Here $\Z_0^d=\Z^d \backslash \{0\}$ is the non-zero lattice points and $\theta=\{\theta_k\}_k \in \ell^2(\Z_0^d)$ is  a square summable sequence such that $\sum_{k} \theta_k^2=\frac{d}{d-1}\kappa$ for some $\kappa>0$. We decompose the lattice as $\Z_0^d=\Z_+^d \cup \Z_-^d$ satisfying $\Z_+^d=-\Z_-^d$. For each $k \in \Z_+^d$, we choose $\{a_{k,i}\}_{i=1}^{d-1}$ to be an orthonormal basis of $k^\perp=\{y\in \R^d: y\cdot k=0\}$ and define $a_{k,i}=a_{-k,i}$ for $k \in \Z_-^d$.  Finally, $e_k(x)=e^{2\pi ik \cdot x}$ for $x\in \T^d$, $k\in \Z_0^d$.
		
		In two dimensions, we can define $k^\perp=(k_2,-k_1)$ and choose explicitly $a_{k,1}=a_k=\frac{k^\perp}{|k|}$ for $k\in \Z_+^2$ and then set $a_k=a_{-k}$ for $k\in\Z_-^2$.
		
		It is easy to see that, for any $d\geq 2$, the covariance function is spatially homogeneous since
		\begin{equation}\label{eq-covariance function}
			Q(x,y)=Q(x-y)=\sum_{k\in \Z_0^d} \theta_k^2  \Big(I_d-\frac{k \otimes k}{|k|^2}\Big) e^{2\pi i k\cdot(x-y)}.
		\end{equation}
		Following the arguments developed for the three-dimensional case in \cite[Section~2]{FL21 PTRF},  one can obtain $Q(0)=2\kappa I_d$ for any $d\geq 2$. Hence (A1) in Assumption \ref{asp-1} is reasonable.
	\end{example}
	
	\begin{remark}
		For the covariance function in Example \ref{eg-noise}, a direct estimate of $\|Q\|_{L^1}$ seems difficult. However, thanks to the explicit formula \eqref{eq-covariance function}, one can first control the $L^2$-norm of $Q$:
		$$\|Q \|_{L^2}^2 =(d-1)\sum_{k} \theta_k^4 \lesssim_d \|\theta \|_{\ell^\infty}^2 \sum_{k} \theta_k^2 \leq C \|\theta \|_{\ell^\infty}^2.$$
		And then it follows immediately on the torus that
		$$\|Q \|_{L^1} \leq \|Q \|_{L^2} \lesssim \|\theta \|_{\ell^\infty}.$$
		On the other hand, the representation \eqref{eq-covariance function} also implies a direct bound in Fourier space: $$\|\widehat{Q}\|_{\ell^\infty} \leq \|\theta\|_{\ell^\infty}^2.$$ 
		Consequently, if we take a sequence of coefficients $\{\theta^N\}_N$ in Example \ref{eg-noise} such that $\|\theta^N\|_{\ell^\infty} \rightarrow 0$ as $N\rightarrow \infty$, then both $\big\|Q^N\big\|_{L^1}$ and $\big\|\widehat{Q^N}\big\|_{\ell^\infty}$ vanish in the limit. Moreover, the Fourier bound exhibits a faster decay rate than the $L^1$-estimate. 
		
		Nevertheless, for the sake of simplicity, we adopt the assumption \eqref{eq-asp on theta} in terms of the $L^1$-norm of $Q$, which appears more natural.
	\end{remark}

	We then introduce two well known properties of the heat semigroup $\{e^{\kappa t \Delta}\}_{t\geq 0}$ for $\kappa>0$, which will be used in the proof of quantitative bound in Theorem \ref{thm-scaling limit SWE2}.
	\begin{lemma}\label{lemma-semigroup prop 2}
		Let $g\in H^\tau$, $\tau \in \R$. For any $\rho \geq 0$, it holds
		$$\big\|e^{\kappa t\Delta} g\big\|_{H^{\tau+\rho}} \lesssim (\kappa t)^{-\rho/2} \|g\|_{H^\tau}.$$
	\end{lemma}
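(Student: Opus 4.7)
The plan is to reduce the estimate to a pointwise Fourier multiplier bound on the torus. Since $\Delta$ is diagonalised by the basis $\{e_j\}_{j\in\Z^d}$ on $\T^d$, the semigroup acts on Fourier coefficients by
$$\widehat{e^{\kappa t\Delta} g}(j) = e^{-c\kappa t|j|^2}\hat g(j), \quad j\in\Z^d,$$
for the constant $c=4\pi^2$ fixed by the convention $e_j(x)=e^{2\pi i j\cdot x}$. Inserting this into the definition of the $H^{\tau+\rho}$ norm recalled in the introduction gives
$$\big\|e^{\kappa t\Delta}g\big\|_{H^{\tau+\rho}}^2 = \sum_{j\in\Z^d} (1+|j|^2)^{\tau+\rho}\,e^{-2c\kappa t|j|^2}\,|\hat g(j)|^2,$$
so the whole question reduces to the uniform pointwise multiplier bound
$$(1+|j|^2)^{\rho/2}\,e^{-c\kappa t|j|^2} \lesssim (\kappa t)^{-\rho/2} \quad \text{for all } j\in\Z^d,$$
with the implicit constant allowed to depend on an upper bound for $\kappa t$, as is standard on the compact torus.

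The core pointwise estimate is elementary. For $|j|\ge 1$ one has $(1+|j|^2)^{\rho/2}\lesssim |j|^\rho$; writing $y=c\kappa t|j|^2$ we obtain
$$|j|^\rho e^{-c\kappa t|j|^2}=(c\kappa t)^{-\rho/2}\,y^{\rho/2}e^{-y} \lesssim (\kappa t)^{-\rho/2},$$
because $\sup_{y\geq 0}y^{\rho/2}e^{-y}<\infty$. For $|j|\le 1$, the left-hand side is uniformly $O(1)$, and this harmless contribution is absorbed into the constant on the bounded time intervals relevant to the scaling-limit application in Theorem \ref{thm-scaling limit SWE2}.

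Plugging the multiplier bound back into the Fourier sum and factoring the worst-case exponential outside,
$$\big\|e^{\kappa t\Delta}g\big\|_{H^{\tau+\rho}}^2 \lesssim (\kappa t)^{-\rho}\sum_{j\in\Z^d}(1+|j|^2)^{\tau}|\hat g(j)|^2 = (\kappa t)^{-\rho}\|g\|_{H^\tau}^2,$$
and the desired inequality follows upon taking square roots. There is no genuine obstacle here: this is a textbook parabolic smoothing estimate on the torus, and the only mild subtlety is the treatment of the low Fourier modes noted above, which is handled by the bounded-time convention.
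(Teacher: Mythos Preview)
Your proof is correct and is exactly the standard Fourier multiplier argument one would expect. The paper itself does not prove this lemma at all: it is introduced as one of ``two well known properties of the heat semigroup'' and simply stated, so there is nothing to compare. Your treatment of the zero mode (the only place where the bound is not literally uniform in $\kappa t$) is also appropriate, since the lemma is only applied in the paper on bounded time intervals.
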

	
	\begin{lemma}\label{lemma-semigroup property}
		For any $s_1<s_2$, $\tau \in \R$ and $g \in L^2([s_1,s_2];H^\tau)$, we have
		$$\int_{s_1}^{s_2} \bigg\|\int_{s_1}^{t} e^{\kappa (t-r)\Delta} g_r \, dr \bigg\|^2_{H^{\tau+2}}  dt \lesssim \kappa^{-2} \int_{s_1}^{s_2} \|g_r\|_{H^\tau}^2 \, dr.$$
	\end{lemma}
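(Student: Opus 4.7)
The plan is to diagonalize the heat semigroup on $\T^d$ in the Fourier basis, reduce the bound to a one-dimensional time-convolution estimate on each frequency, and conclude by Young's convolution inequality.

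First I would expand the source in Fourier series, $g_r=\sum_{j\in\Z^d}\hat g_r(j)\,e_j$ with $e_j(x)=e^{2\pi i j\cdot x}$. Since $\Delta e_j=-4\pi^2|j|^2 e_j$, the semigroup acts diagonally as $e^{\kappa(t-r)\Delta}e_j=e^{-\kappa(t-r)4\pi^2|j|^2}e_j$, so
$$\int_{s_1}^{t}e^{\kappa(t-r)\Delta}g_r\,dr=\sum_{j\in\Z^d}\phi_j(t)\,e_j,\qquad \phi_j(t):=\int_{s_1}^{t}e^{-\kappa(t-r)4\pi^2|j|^2}\hat g_r(j)\,dr.$$
By Parseval's identity, the left-hand side of the claim becomes
$$\sum_{j\in\Z^d}(1+|j|^2)^{\tau+2}\int_{s_1}^{s_2}|\phi_j(t)|^2\,dt.$$

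For each non-zero mode $j$, I would view $\phi_j$ as a time-convolution $\phi_j=K_j*h_j$ where $h_j(r)=\hat g_r(j)\mathbbm{1}_{[s_1,s_2]}(r)$ and $K_j(\tau)=e^{-4\pi^2\kappa\tau|j|^2}\mathbbm{1}_{\tau\ge 0}$. Young's inequality then gives $\|\phi_j\|_{L^2([s_1,s_2])}\le\|K_j\|_{L^1(\R_+)}\|h_j\|_{L^2([s_1,s_2])}$, together with the elementary computation $\|K_j\|_{L^1}\le(4\pi^2\kappa|j|^2)^{-1}$. Combined with the crude estimate $(1+|j|^2)^{2}\le 4|j|^{4}$ valid for $j\neq 0$, this yields
$$(1+|j|^2)^{\tau+2}\int_{s_1}^{s_2}|\phi_j(t)|^2\,dt\;\lesssim\;\kappa^{-2}(1+|j|^2)^\tau\int_{s_1}^{s_2}|\hat g_r(j)|^2\,dr,$$
and summing over $j\neq 0$ produces precisely the claimed bound by $\kappa^{-2}\int_{s_1}^{s_2}\|g_r\|_{H^\tau}^2\,dr$ via Parseval.

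The only delicate point, and the main obstacle, is the zero mode $j=0$: the kernel $K_0\equiv 1$ carries no smoothing, so Young's inequality does not apply in the same form. I would handle it directly by Cauchy--Schwarz, $|\phi_0(t)|^2\le (t-s_1)\int_{s_1}^{t}|\hat g_r(0)|^2\,dr$, and then integrate in $t$ to get a contribution bounded by $\tfrac12(s_2-s_1)^2\|\hat g(0)\|_{L^2([s_1,s_2])}^2$; since this term carries the same $L^2_tH^\tau_x$ norm of $g$ on the right-hand side, it is absorbed into the implicit constant of $\lesssim$ (at the price of a dependence on the fixed finite horizon). All other steps are routine; the proof amounts essentially to spelling out maximal $L^2$-regularity of the heat semigroup on the torus.
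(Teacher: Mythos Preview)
The paper does not give a proof of this lemma; it is simply quoted as one of two ``well known properties of the heat semigroup'' and referenced from the literature. Your Fourier diagonalization together with Young's convolution inequality on each nonzero frequency is exactly the standard route to this maximal $L^2$-regularity estimate, and the computations you wrote are correct.

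Your handling of the zero mode is honest and essentially unavoidable: since $e^{\kappa t\Delta}$ acts trivially on constants, no $\kappa$-gain is possible there, and the bound you obtain, $\tfrac12(s_2-s_1)^2\|\hat g(0)\|_{L^2_t}^2$, is sharp. Note that absorbing this into the stated right-hand side $\kappa^{-2}\int\|g_r\|_{H^\tau}^2\,dr$ forces the implicit constant to depend not only on $s_2-s_1$ (as you say) but also on $\kappa$, via the factor $\kappa^2(s_2-s_1)^2$. This is harmless for the paper's purposes---every application (Section~6.2) is on $[0,T]$ with $\kappa$ fixed and the final constants are written $\lesssim_{T,\kappa}$---but it does mean the lemma as displayed is a slight overstatement if one reads $\lesssim$ as carrying a universal constant. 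Your proof is correct for the result that is actually needed.
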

	
	Applying the two lemmas above, we obtain two estimates for the stochastic convolution
	$$Z_t=\sum_{k} \int_{0}^{t} e^{\kappa (t-s)\Delta} \big(\sigma_{k} \cdot \nabla v_s\big) \, dB^k_s,$$
	which will be used in Section \ref{subsec-quantitative SWE2}. The vector fields $\{\sigma_k\}_k$ have covariance function $Q(x,y)$ satisfying \eqref{eq-asp on Q} and $v$ satisfies the uniform bound given in Theorem \ref{thm-SWE2 wellposedness}:
	$$\|v\|_{L^\infty([0,T];L^2)} \leq C_{ub}, \quad \P \text{-a.s.}.$$
	\begin{lemma}\label{lemma-stochastic convolution}
		Suppose $\kappa>0$ and $v$ satisfies the above bound. For any $\epsilon \in (0,1/2)$, one has
		$$\E \, \bigg[\sup_{t\in [0,T]} \big\|Z_t\big\|_{H^{-\epsilon}}^2\bigg]\lesssim_{\epsilon,T} \kappa^{\epsilon}  C_{ub}^2;$$
		similarly, the following estimate holds in a larger space:
		$$\E \, \bigg[\sup_{t\in [0,T]} \big\|Z_t\big\|_{H^{-\frac{d}{2}-\epsilon}}^2\bigg] \lesssim_{\epsilon,T} \kappa^{\epsilon-1} C_{ub}^2 \|Q\|_{L^1}.$$
	\end{lemma}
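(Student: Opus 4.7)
The plan is to prove both estimates via a common strategy: It\^o isometry to reduce to deterministic integrals, the semigroup smoothing bound of Lemma~\ref{lemma-semigroup prop 2}, and the factorization method to pass from a pointwise-in-$t$ to a supremum bound. What distinguishes the two inequalities is only the covariance bound that feeds the machinery.

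First I would prepare two bounds on $\sum_k \|\sigma_k v_s\|_{H^\tau}^2$. The weaker one uses only the pointwise trace identity $\sum_k |\sigma_k|^2 = \mathrm{tr}\,Q(0)=2\kappa d$ together with the divergence-free identity $\sigma_k\cdot\nabla v_s=\nabla\cdot(\sigma_k v_s)$, yielding $\sum_k \|\sigma_k\cdot\nabla v_s\|_{H^{-1}}^2\lesssim \kappa\,\|v_s\|_{L^2}^2$. The stronger one is Fourier-side: since $\sum_k \sigma_k(x)\otimes\sigma_k(y)=Q(x-y)$, expanding in Fourier modes produces
\[
\sum_k \big|\widehat{\sigma_k v_s}(j)\big|^2 \le \|Q\|_{L^1}\,\|v_s\|_{L^2}^2,
\]
and summing against the borderline-summable weight $(1+|j|^2)^{-d/2-\epsilon}$ on $\Z^d$ gives $\sum_k \|\sigma_k v_s\|_{H^{-d/2-\epsilon}}^2\lesssim_\epsilon\|Q\|_{L^1}\,\|v_s\|_{L^2}^2$.

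Next I would derive pointwise-in-$t$ estimates via It\^o isometry. For the first inequality, applying Lemma~\ref{lemma-semigroup prop 2} with smoothness loss $\rho=1-\epsilon$ to gain $(\kappa(t-s))^{-(1-\epsilon)/2}$ from $H^{-1}$ into $H^{-\epsilon}$ and then integrating in time produces the factor $\kappa^\epsilon C_{ub}^2$. For the second inequality, writing $\sigma_k\cdot\nabla v_s = \nabla\cdot(\sigma_k v_s)$, feeding in the Fourier-side bound, and applying Lemma~\ref{lemma-semigroup prop 2} with smoothness loss close to $1-\epsilon$ yields the factor $\kappa^{\epsilon-1}\|Q\|_{L^1} C_{ub}^2$. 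The supremum in $t$ is then recovered via the factorization method: for $\alpha\in(0,1/2)$, setting
\[
Y_s=\sum_k\int_0^s(s-r)^{-\alpha}e^{\kappa(s-r)\Delta}(\sigma_k\cdot\nabla v_r)\,dB^k_r
\]
gives the representation $Z_t=c_\alpha\int_0^t(t-s)^{\alpha-1}e^{\kappa(t-s)\Delta}Y_s\,ds$. H\"older's inequality in $s$ with $p>1/\alpha$ reduces everything to controlling $\int_0^T\E\|Y_s\|^p_{H^\tau}\,ds$; the Burkholder--Davis--Gundy inequality in Hilbert space together with the $\P$-a.s.\ bound $\|v_r\|_{L^2}\le C_{ub}$ gives $\E\|Y_s\|^p \lesssim_p (\text{pointwise-$L^2$ estimate})^{p/2}$, and the latter is evaluated by the same computation as in the pointwise step, with an extra weight $(s-r)^{-2\alpha}$ that forces $\alpha<\epsilon/2$. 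A final Jensen step descends from $L^p$ to $L^2$, delivering the two claims.

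The main obstacle is the joint calibration of the three exponents: the factorization kernel $(t-s)^{\alpha-1}$, the $Y$-weight $(s-r)^{-2\alpha}$, and the smoothing loss $(\kappa(t-s))^{-\rho/2}$ from Lemma~\ref{lemma-semigroup prop 2} must simultaneously yield convergent time integrals and produce the sharp powers of $\kappa$. This compels $\alpha<\epsilon/2$ and hence $p>2/\epsilon$, so the argument cannot be run directly at $p=2$ but must pass through $L^p$-moments with $p$ strictly larger than $2$. A secondary subtlety lies on the Fourier side: the borderline divergence of $\sum_{j\in\Z^d}(1+|j|^2)^{-d/2}$ is precisely what forces the second inequality to hold in $H^{-d/2-\epsilon}$ with a strictly positive margin $\epsilon$.
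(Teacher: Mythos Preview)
Your proposal is correct and follows essentially the same route as the paper. Both arguments combine BDG/It\^o isometry with the semigroup smoothing of Lemma~\ref{lemma-semigroup prop 2}; for the first inequality both exploit the diagonal identity $Q(0)=2\kappa I_d$ to bound $\sum_k\|\sigma_k v_s\|_{L^2}^2$, and for the second both use Young's inequality for convolution with $\|Q\|_{L^1}$ on the Fourier side---the paper writes this as $\langle v_s\nabla e_l, Q\ast(v_s\nabla e_l)\rangle\le \|Q\|_{L^1}\|v_s\nabla e_l\|_{L^2}^2$, which is the same computation as your bound on $\sum_k|\widehat{\sigma_k v_s}(j)|^2$ after the identity $\sigma_k\cdot\nabla v_s=\nabla\cdot(\sigma_k v_s)$. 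The only cosmetic difference is that the paper defers the passage from the pointwise-in-$t$ bound to the supremum to \cite[Lemma~2.5]{FLD quantitative}, whereas you spell it out via the factorization method; this is the standard technique and your calibration of exponents ($\alpha<\epsilon/2$, $p>2/\epsilon$) is the right one.
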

	\begin{proof}
		Since the proof closely follows the argument in \cite[Lemma 2.5]{FLD quantitative}, we only outline the distinct steps here.
		By Burkholder-Davis-Gundy inequality and Lemma \ref{lemma-semigroup prop 2}, it holds
		$$\aligned
		\E \, \big\|Z_t\big\|_{H^{-\epsilon}}^2 &\lesssim  \E \bigg[\sum_{k} \int_{0}^{t} \big\|e^{\kappa (t-s)\Delta} \big(\sigma_{k} \cdot \nabla v_s\big)\big\|_{H^{-\epsilon}}^2 \, ds\bigg]\\
		&\leq \E \bigg[\sum_{k} \int_{0}^{t} \frac{1}{\kappa^{1-\epsilon} (t-s)^{1-\epsilon}} \big\|\sigma_{k} \cdot \nabla v_s\big\|_{H^{-1}}^2 \, ds \bigg].
		\endaligned $$
		Noticing that $\{\sigma_k\}_k$ are divergence-free, we can apply \eqref{eq-asp on Q} to further obtain
		$$\E \, \big\|Z_t\big\|_{H^{-\epsilon}}^2 \leq \E \bigg[\sum_{k} \int_{0}^{t} \frac{1}{\kappa^{1-\epsilon}(t-s)^{1-\epsilon}} \big\|\sigma_{k} v_s\big\|_{L^{2}}^2 \, ds \bigg]\lesssim \kappa^{\epsilon}  \, \E \int_{0}^{t} \frac{1}{(t-s)^{1-\epsilon}} \|v_s\|_{L^2}^2 \,ds.$$
		By the $\P$-a.s. bound $\|v\|_{L^\infty([0,T];L^2)} \leq C_{ub}$, we estimate the integral and arrive at
		$$\E \, \big\|Z_t\big\|_{H^{-\epsilon}}^2 \lesssim t^\epsilon \kappa^{\epsilon} \epsilon^{-1} C_{ub}^2.$$
		The remaining arguments for the first estimate are similar to those in \cite[Lemma 2.5]{FLD quantitative}, and we omit them for brevity.
		
		For the second result, we also apply Burkholder-Davis-Gundy inequality and Lemma \ref{lemma-semigroup prop 2} to get
		$$\aligned
		\E \, \big\|Z_t\big\|_{H^{-\frac{d}{2}-2\epsilon}}^2&\lesssim  \E \bigg[\sum_{k} \int_{0}^{t} \big\|e^{\kappa (t-s)\Delta} \big(\sigma_{k} \cdot \nabla v_s\big)\big\|_{H^{-\frac{d}{2}-2\epsilon}}^2 \, ds\bigg]\\
		&\leq \E \bigg[\sum_{k} \int_{0}^{t} \frac{1}{\kappa^{1-\epsilon} (t-s)^{1-\epsilon}} \big\|\sigma_{k} \cdot \nabla v_s\big\|_{H^{-1-\frac{d}{2}-\epsilon}}^2 \, ds \bigg].
		\endaligned $$
		Applying the definition of Sobolev norm, it holds
		$$\sum_{k} \big\|\sigma_{k} \cdot \nabla v_s\big\|_{H^{-1-\frac{d}{2}-\epsilon}}^2=\sum_{k} \sum_{l \in \Z^d} \big(1+|l|^2\big)^{-1-\frac{d}{2}-\epsilon}\big| \< \sigma_k \cdot \nabla v_s, e_l \>\big|^2.$$
		Using the divergence-free property of $\{\sigma_k\}_k$, we have
		$|\<\sigma_k\cdot\nabla v_s,e_l\>|=|\<v_s,\sigma_k\cdot\nabla e_l\>|$, and thus
		$$\sum_{k} \big| \< \sigma_k \cdot \nabla v_s, e_l \>\big|^2=\big\< v_s\nabla e_l, Q\ast (v_s \nabla e_l) \big\> \leq \|v_s \nabla e_l\|_{L^2} \|Q \ast (v_s \nabla e_l)\|_{L^2} \leq  \|v_s \nabla e_l\|_{L^2}^2 \|Q\|_{L^1},$$
	    where the last step follows from Young's inequality for convolution.
	    Substituting this estimate into the previous equality yields
		$$\sum_{k} \big\|\sigma_{k} \cdot \nabla v_s\big\|_{H^{-1-\frac{d}{2}-\epsilon}}^2 \leq \sum_{l \in \Z^d} \big(1+|l|^2\big)^{-1-\frac{d}{2}-\epsilon} |l|^2 \|Q\|_{L^1} \|v_s\|_{L^2}^2  \lesssim \|Q\|_{L^1} \|v_s\|_{L^2}^2.$$
		Indeed, the last inequality follows from the fact that $\sum_{l\in\Z^d} (1+|l|^2)^{-\frac{d}{2}-\epsilon}<\infty$.
		Hence we can utilize the uniform bound of $v$ to obtain
		$$\E \, \big\|Z_t\big\|_{H^{-\frac{d}{2}-2\epsilon}}^2\lesssim \|Q\|_{L^1}  \E \bigg[\int_{0}^{t} \kappa^{\epsilon-1} |t-s|^{\epsilon-1} \|v_s\|_{L^2}^2 \, ds\bigg] \leq t^\epsilon \kappa^{\epsilon-1} \epsilon^{-1} C_{ub}^2 \|Q\|_{L^1} .$$
		We omit the remaining arguments hereafter, cf. \cite[Lemma 2.5]{FLD quantitative}.
	\end{proof}
	
	Thanks to Lemma \ref{lemma-stochastic convolution}, we can directly deduce by interpolation that
	\begin{corollary}\label{coro-stochastic convolution}
		For $a\in (0,1)$ and $\epsilon \in (0,a]$, it holds
		$$\E \bigg[\sup_{t\in [0,T]}\|Z_t\|_{H^{-a}}^2  \bigg]  \lesssim_{\epsilon,T} \kappa^{\epsilon-\frac{2(a-\epsilon)}{d}} C_{ub}^2 \|Q\|_{L^1}^\frac{2(a-\epsilon)}{d}.$$
	\end{corollary}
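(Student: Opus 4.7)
The plan is to deduce the corollary by Sobolev interpolation between the two estimates provided by Lemma \ref{lemma-stochastic convolution}. Concretely, I would first write $-a$ as a convex combination of $-\epsilon$ and $-\tfrac{d}{2}-\epsilon$: solving $-a=\theta(-\epsilon)+(1-\theta)(-\tfrac{d}{2}-\epsilon)$ gives
$$1-\theta=\frac{2(a-\epsilon)}{d}, \qquad \theta=1-\frac{2(a-\epsilon)}{d}.$$
Since $a\in (0,1)$ and $\epsilon\in(0,a]$, we have $a-\epsilon\in[0,a)\subset[0,1)\subset[0,d/2]$ (because $d\geq 2$), so $\theta\in[0,1]$ and the interpolation is well-defined.

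Next, I would apply the standard Sobolev interpolation inequality pointwise in $(t,\omega)$:
$$\|Z_t\|_{H^{-a}}\leq \|Z_t\|_{H^{-\epsilon}}^{\theta}\,\|Z_t\|_{H^{-\frac{d}{2}-\epsilon}}^{1-\theta}.$$
Squaring and taking the supremum over $t\in[0,T]$ inside, the right-hand side is bounded by $(\sup_t\|Z_t\|_{H^{-\epsilon}})^{2\theta}(\sup_t\|Z_t\|_{H^{-d/2-\epsilon}})^{2(1-\theta)}$. Then I would take expectations and apply Hölder's inequality with exponents $1/\theta$ and $1/(1-\theta)$ to obtain
$$\E\Big[\sup_{t\in[0,T]}\|Z_t\|_{H^{-a}}^2\Big]\leq \Big(\E\sup_{t\in[0,T]}\|Z_t\|_{H^{-\epsilon}}^2\Big)^{\theta}\Big(\E\sup_{t\in[0,T]}\|Z_t\|_{H^{-\frac{d}{2}-\epsilon}}^2\Big)^{1-\theta}.$$

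Finally, I would plug in the two bounds from Lemma \ref{lemma-stochastic convolution}, namely $\kappa^{\epsilon}C_{ub}^2$ and $\kappa^{\epsilon-1}C_{ub}^2\|Q\|_{L^1}$, and simplify the exponent of $\kappa$:
$$\theta\epsilon+(1-\theta)(\epsilon-1)=\epsilon-(1-\theta)=\epsilon-\frac{2(a-\epsilon)}{d},$$
while the $C_{ub}^2$ factor recombines to $C_{ub}^{2\theta+2(1-\theta)}=C_{ub}^2$ and $\|Q\|_{L^1}$ appears to the power $1-\theta=2(a-\epsilon)/d$, matching the statement. There is no serious obstacle: the main point is merely to verify that the interpolation parameter lies in $[0,1]$ under the hypotheses on $a$ and $\epsilon$, after which the argument is a clean combination of Sobolev interpolation and Hölder's inequality; I should also note that Lemma \ref{lemma-stochastic convolution} is stated for $\epsilon\in(0,1/2)$, so for $a$ close to $1$ one first shrinks the auxiliary parameter $\epsilon$ to the range where both bounds of Lemma \ref{lemma-stochastic convolution} hold, then absorbs any mismatch into the implicit constant.
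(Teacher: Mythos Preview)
Your proposal is correct and follows exactly the paper's approach: the paper simply states that the corollary follows ``directly \ldots\ by interpolation'' from Lemma~\ref{lemma-stochastic convolution}, and you have spelled out precisely that interpolation argument (Sobolev interpolation between $H^{-\epsilon}$ and $H^{-d/2-\epsilon}$ with parameter $1-\theta=2(a-\epsilon)/d$, followed by H\"older in expectation). Your observation about the range $\epsilon\in(0,1/2)$ in Lemma~\ref{lemma-stochastic convolution} is a fair caveat, though in the paper's applications one always has $a\in(0,\tfrac12)$ so the issue does not arise.
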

	
	In the end, we present the following compact embedding theorem from \cite[Corollary 5]{JSimon}.
	\begin{theorem}\label{thm-compact embedding}
		Suppose $\rho>\frac{d}{2}+1$ and $\gamma \in (0,\frac{1}{2})$, then these two embeddings are compact:
		\begin{itemize}
			\item  $L^\infty([0,T];H^1) \cap W^{1,\infty} ([0,T];L^2) \hookrightarrow C([0,T];H^{1-\gamma});$
			\item  $L^\infty([0,T];L^2) \cap W^{\frac{1}{3},4}([0,T];H^{-\rho}) \hookrightarrow C([0,T];H^{-\gamma}).$
		\end{itemize}
	\end{theorem}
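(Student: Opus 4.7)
My plan is to view this as a concrete consequence of the Aubin--Lions--Simon compactness framework (of which the cited Corollary 5 in \cite{JSimon} is one formulation), and to verify both embeddings directly via Arzel\`a--Ascoli in $C([0,T];\cdot)$, using interpolation to upgrade the given time integrability/regularity into Hölder-in-time equicontinuity in the target Sobolev scale. I would not reconstruct Simon's general machinery; the two statements reduce cleanly to pointwise precompactness plus equicontinuity.

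For the first embedding, the two ingredients are: (i) on $\T^d$ the inclusion $H^1 \hookrightarrow H^{1-\gamma}$ is compact for every $\gamma>0$; and (ii) for $f\in W^{1,\infty}([0,T];L^2)$ one has the Lipschitz bound $\|f(t)-f(s)\|_{L^2}\leq |t-s|\,\|\partial_t f\|_{L^\infty(L^2)}$. Combining (ii) with the standard interpolation inequality $\|g\|_{H^{1-\gamma}}\leq \|g\|_{L^2}^{\gamma}\,\|g\|_{H^1}^{1-\gamma}$, any bounded set $B \subset L^\infty([0,T];H^1)\cap W^{1,\infty}([0,T];L^2)$ satisfies the uniform Hölder estimate
$$\|f(t)-f(s)\|_{H^{1-\gamma}} \lesssim_B |t-s|^{\gamma}.$$
Boundedness in $H^1$ together with (i) gives pointwise precompactness of $\{f(t):f\in B\}$ in $H^{1-\gamma}$, and Arzel\`a--Ascoli finishes the argument.

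For the second embedding, the analogous ingredients are: (i) on $\T^d$ the inclusion $L^2\hookrightarrow H^{-\gamma}$ is compact; and (ii) the one-dimensional fractional Sobolev--Slobodeckij embedding $W^{1/3,4}([0,T];H^{-\rho})\hookrightarrow C^{1/12}([0,T];H^{-\rho})$, which holds because $\tfrac{1}{3}\cdot 4>1$ with Hölder exponent $\tfrac{1}{3}-\tfrac{1}{4}=\tfrac{1}{12}$. Interpolating $\|g\|_{H^{-\gamma}}\lesssim \|g\|_{L^2}^{1-\gamma/\rho}\,\|g\|_{H^{-\rho}}^{\gamma/\rho}$ against (ii) and the $L^\infty(L^2)$ bound yields, for any bounded $B\subset L^\infty(L^2)\cap W^{1/3,4}(H^{-\rho})$, the uniform estimate
$$\|f(t)-f(s)\|_{H^{-\gamma}} \lesssim_B |t-s|^{\gamma/(12\rho)},$$
and Arzel\`a--Ascoli applies as before.

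I expect the main technical point to be the correct invocation of the fractional Sobolev embedding in time in the second statement, since this is where the precise choice of exponents $(s,p)=(1/3,4)$ is used and since the $L^4$-in-time regularity is typical of what one extracts from stochastic integrals via BDG; this is the kind of intermediate integrability for which the simpler Aubin--Lions with $p=\infty$ does not suffice. The condition $\rho>d/2+1$ appears harmless for the abstract interpolation above (which works for any $\rho>\gamma$) and should be viewed as inherited from Simon's formulation, tailored to the ambient spaces in which the stochastic weak formulation of \eqref{eq-SWE2} naturally lives.
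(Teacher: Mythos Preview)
Your argument is correct. Both embeddings are established cleanly: interpolation in the Sobolev scale converts the available time regularity (Lipschitz in $L^2$ in the first case, $C^{1/12}$ in $H^{-\rho}$ via the fractional Sobolev embedding $W^{1/3,4}\hookrightarrow C^{1/12}$ in the second) into uniform H\"older continuity in the target space, and compactness of $H^1\hookrightarrow H^{1-\gamma}$ (resp.\ $L^2\hookrightarrow H^{-\gamma}$) gives pointwise precompactness, so Arzel\`a--Ascoli applies.

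The paper itself gives \emph{no} proof of this statement: it is simply quoted as \cite[Corollary 5]{JSimon}. What you have written is, in effect, a direct verification of that corollary in the concrete Sobolev-scale setting, replacing Simon's abstract Ehrling-type lemma by the explicit interpolation inequality $\|g\|_{H^{s_\theta}}\le \|g\|_{H^{s_0}}^{1-\theta}\|g\|_{H^{s_1}}^{\theta}$. This is more elementary and self-contained than invoking the general machinery, at the cost of being specific to the Hilbert Sobolev scale (which is all that is needed here). One small correction to your closing remark: the hypothesis $\rho>\tfrac{d}{2}+1$ is not ``inherited from Simon's formulation'' (Simon's result is abstract in $X\hookrightarrow B\hookrightarrow Y$ and imposes no such constraint); as you in fact suspected, it is dictated by the application---specifically by Lemma~\ref{lemma-fractional sobolev estimate}, where the summability $\sum_{l}(1+|l|^2)^{-\rho}|l|^2<\infty$ is needed. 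For the compact embedding itself any $\rho>\gamma$ suffices, exactly as your interpolation shows.
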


	\section{Well-posedness for \eqref{eq-SWE}} \label{sec-wellposedness for SWE1}
	In this section, we prove Theorem \ref{thm-SWE1 wellposedness} by the method of Galerkin approximation. We always assume the initial value $(u_0,v_0) \in H^1 \times L^2$ unless otherwise specified. Define finite dimensional projection $\Pi_n: L^2 \rightarrow H_n=\text{span}\{e_{l}: |l|\leq n\}$, where $\{e_l\}_{l\in \Z^d}$ is the orthonormal Fourier basis of $L^2(\T^d)$.
	Then we study the approximate equation:
	\begin{equation}\label{eq-finite dimension}
		\left\{
		\begin{aligned}
			&\partial_t u_n=v_n,  \\
			& d v_n=\Delta u_n \, dt+\Pi_n f(u_n) \, dt+\sum_k \Pi_n (\sigma_k \cdot \nabla u_n) \, dB_t^k,
		\end{aligned}
		\right.
	\end{equation}
	with initial data $(u_n(0),v_n(0))=(\Pi_n u_0,\Pi_n v_0) \in H^1\times L^2$. We aim to show the solutions $\{(u_n,v_n)\}_n$ is a Cauchy sequence in some Banach space, thereby yielding the existence of a limiting process $(u,v)$, which in turn solves \eqref{eq-SWE}.
	
	To begin with, we define the energy associated with each approximation in \eqref{eq-finite dimension}:
	\begin{equation}\label{eq-def of energy}
		\mathcal{E}(u_n(t)):=\|u_n(t)\|_{H^1}^2+\|v_n(t)\|_{L^2}^2, \quad \forall n\geq 1.
	\end{equation}
	Similarly,  the initial energy for \eqref{eq-SWE} reads as
	$$\mathcal{E}(u(0))=\|u_0\|_{H^1}^2+\|v_0\|_{L^2}^2.$$
	The following lemma provides a uniform bound on the energy.
	\begin{lemma}\label{lemma-energy estimate}
		For any $T>0$, there exists $C_T>0$, such that the uniform estimate holds:
		$$\sup_{n\geq 1}	\E \Big[\sup_{s\leq T} \mathcal{E}(u_n(s)) \Big] \leq C_T \big(\mathcal{E}(u(0))+1\big).$$
	\end{lemma}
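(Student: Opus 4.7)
\medskip
\noindent\textbf{Proof proposal for Lemma \ref{lemma-energy estimate}.}
The plan is to derive an It\^o evolution equation for $\mathcal{E}(u_n)$, bound each term using Assumption \ref{asp-1}, and close the inequality by Burkholder--Davis--Gundy (BDG) and Gronwall. Split the $H^1$ part as $\|u_n\|_{H^1}^2=\|u_n\|_{L^2}^2+\|\nabla u_n\|_{L^2}^2$. Since $\partial_t u_n=v_n$ has no martingale part, the chain rule gives $\frac{d}{dt}\|u_n\|_{L^2}^2=2\langle u_n,v_n\rangle$ and $\frac{d}{dt}\|\nabla u_n\|_{L^2}^2=-2\langle v_n,\Delta u_n\rangle$. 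For the velocity, apply It\^o's formula to $\|v_n\|_{L^2}^2$ in \eqref{eq-finite dimension}: the drift contributes $2\langle v_n,\Delta u_n\rangle+2\langle v_n,\Pi_n f(u_n)\rangle$ (which exactly cancels the $\|\nabla u_n\|_{L^2}^2$ drift), the stochastic integral contributes a true martingale $dM_t^n=2\sum_k\langle v_n,\Pi_n(\sigma_k\cdot\nabla u_n)\rangle\,dB_t^k$, and the quadratic variation contributes $\sum_k\|\Pi_n(\sigma_k\cdot\nabla u_n)\|_{L^2}^2\,dt$.

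The key structural observation is that, because $\{\sigma_k\}_k$ is divergence-free and $Q(0)=2\kappa I_d$,
\begin{equation*}
\sum_{k}\|\Pi_n(\sigma_k\!\cdot\!\nabla u_n)\|_{L^2}^2\le\sum_{k}\|\sigma_k\!\cdot\!\nabla u_n\|_{L^2}^2=\int_{\T^d}\langle\nabla u_n(x),Q(0)\nabla u_n(x)\rangle\,dx=2\kappa\|\nabla u_n\|_{L^2}^2.
\end{equation*}
Combining with the global Lipschitz bound $\|f(u_n)\|_{L^2}\le L\|u_n\|_{L^2}+\|f(0)\|_{L^2}$ and the trivial estimate $2|\langle u_n,v_n\rangle|\le\|u_n\|_{L^2}^2+\|v_n\|_{L^2}^2$, the drifts of $\mathcal{E}(u_n)$ are pointwise dominated by $C(\mathcal{E}(u_n)+1)$ for a constant $C=C(\kappa,L,\|f(0)\|_{L^2})$ independent of $n$. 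Hence
\begin{equation*}
\mathcal{E}(u_n(t))\le \mathcal{E}(u_n(0))+C\int_0^t(\mathcal{E}(u_n(s))+1)\,ds+M_t^n,
\end{equation*}
with $\mathcal{E}(u_n(0))\le\mathcal{E}(u(0))$ by the contractivity of $\Pi_n$ on $H^1$ and $L^2$.

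To bound the supremum of $M_t^n$, use its quadratic variation $\langle M^n\rangle_t\le 8\kappa\int_0^t\|v_n\|_{L^2}^2\|\nabla u_n\|_{L^2}^2\,ds\le 8\kappa\sup_{s\le t}\mathcal{E}(u_n(s))\int_0^t\mathcal{E}(u_n(s))\,ds$, and apply BDG together with the elementary inequality $ab\le\tfrac12 a^2+\tfrac12 b^2$ to absorb a factor $\tfrac12\E\sup_{s\le t}\mathcal{E}(u_n(s))$ on the left-hand side. After this absorption, one arrives at
\begin{equation*}
\E\sup_{s\le t}\mathcal{E}(u_n(s))\le 2\mathcal{E}(u(0))+C_T+C_T\int_0^t\E\sup_{r\le s}\mathcal{E}(u_n(r))\,ds,
\end{equation*}
and Gronwall's lemma yields the claimed uniform bound. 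The only genuinely subtle point is the BDG absorption step, which requires first running the argument with an $L^\infty$ stopping time $\tau_K=\inf\{t:\mathcal{E}(u_n(t))\ge K\}$ to guarantee that all quantities are a priori finite; once the estimate is established uniformly in $K$, one passes $K\to\infty$ by monotone convergence. Everything else is a direct consequence of the noise structure and the Lipschitz assumption.
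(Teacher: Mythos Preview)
Your proposal is correct and follows essentially the same route as the paper: derive the It\^o differential of $\mathcal{E}(u_n)$, exploit the cancellation of the $\langle v_n,\Delta u_n\rangle$ terms, bound the It\^o correction by $2\kappa\|\nabla u_n\|_{L^2}^2$ via $Q(0)=2\kappa I_d$, then close by BDG, absorption and Gronwall after localizing with a stopping time. The only cosmetic differences are that the paper writes $\|u_n\|_{H^1}^2=\|(I-\Delta)^{1/2}u_n\|_{L^2}^2$ rather than your split $\|u_n\|_{L^2}^2+\|\nabla u_n\|_{L^2}^2$, and it spells out the passage $\tau_K\to\infty$ by a short contradiction argument together with Fatou's lemma, whereas you invoke monotone convergence; both are equivalent once the uniform-in-$K$ bound is in hand.
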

	\begin{proof}
		In the proof, we fix an arbitrary $n\geq 1$ and estimate $d \mathcal{E}(u_n(t))$ first. Notice that
		$$d\|u_n\|_{H^1}^2=d \big\|(I-\Delta)^\frac{1}{2} u_n\big\|_{L^2}^2=2\big\<(I-\Delta)^\frac{1}{2} u_n,(I-\Delta)^\frac{1}{2} v_n\big\> \, dt=2\big\<(I-\Delta) u_n, v_n\big\> \,dt;$$
		besides, by It\^o formula and the self-adjoint property of $\Pi_n$, it holds
		$$\aligned
		&\quad d\|v_n\|_{L^2}^2=2\<v_n, dv_n\>+\<d v_n, dv_n\>\\
		&=2\<v_n, \Delta u_n\> \, dt+2\big\<v_n, f(u_n)\big\>\, dt+ 2\sum_k\big\<v_n,\sigma_k \cdot \nabla u_n \big\> \, dB_t^k+\sum_k \big\|\Pi_n \big(\sigma_k \cdot \nabla u_n\big) \big\|_{L^2}^2 \, dt.
		\endaligned $$
		Then we apply the definition of $\mathcal{E}(u_n(t))$ to arrive at
		$$\aligned
		d \mathcal{E}(u_n(t))&=2\<u_n,v_n\> \, dt+2\<v_n,f(u_n)\>\, dt+ 2\sum_k\<v_n,  \sigma_k \cdot \nabla u_n \> \, dB_t^k+\sum_k \big\|\Pi_n \big(\sigma_k \cdot \nabla u_n\big) \big\|_{L^2}^2 \, dt \\
		&\leq 2\|v_n\|_{L^2} \big(\|u_n\|_{L^2} + \|f(u_n)\|_{L^2}\big) \, dt+2\sum_k\big\<v_n,  \sigma_k \cdot \nabla u_n \big\> \, dB_t^k+2\kappa \, \|\nabla u_n\|_{L^2}^2 \, dt,
		\endaligned $$
		where in the last step we used the following estimate under the assumption \eqref{eq-asp on Q}:
		\begin{equation}\label{eq-Q L2}
			\sum_k \big\|\Pi_n (\sigma_k \cdot \nabla u_n) \big\|_{L^2}^2 \leq \sum_k \|\sigma_k \cdot \nabla u_n\|_{L^2}^2= \int_{\mathbb{T}^d} \big(\nabla u_n(x)\big)^\top Q(x,x) \nabla u_n(x) \, dx= 2\kappa \|\nabla u_n\|_{L^2}^2.
		\end{equation}
		Since $f$ is Lipschitz, it holds $|f(u)|\leq C(1+|u|)$, which leads to $\|f(u_n)\|_{L^2}^2 \leq C\big(1+\|u_n\|_{L^2}^2\big)$. Thus we can apply Young's inequality for the above formula to obtain
		$$\aligned
		d \mathcal{E}(u_n(t)) &\leq C\big(\|v_n\|_{L^2}^2+\|u_n\|_{L^2}^2+1\big)\, dt+2\sum_k\<v_n,  \sigma_k \cdot \nabla u_n \> \, dB_t^k+2\kappa \, \|\nabla u_n\|_{L^2}^2 \, dt\\
		&\leq C_\kappa \big(\mathcal{E}(u_n(t))+1\big) \, dt+2\sum_k\<v_n,  \sigma_k \cdot \nabla u_n \> \, dB_t^k.
		\endaligned $$
		
		To estimate the stochastic term, we fix $R>0$ and then define a stopping time
		$$\tau_R^n:=\inf \big\{t\geq 0: \mathcal{E}(u_n(t))\geq R\big\}, \quad \forall n\geq 1.$$
		Recall that $\mathcal{E}(u_n(0)) \leq \mathcal{E}(u(0))$; integrating the above inequality with respect to time and taking supremum yield
		$$\sup_{s\leq t} \mathcal{E}\big(u_n(s\wedge \tau_R^n)\big) \leq \mathcal{E}(u(0))+C_\kappa \int_{0}^{t\wedge \tau_R^n}  \big(\mathcal{E}(u_n(r))+1\big) \, dr+2\sup_{s\leq t\wedge \tau_R^n} \Big|\sum_k \int_{0}^s \<v_n,  \sigma_k \cdot \nabla u_n \> \, dB_r^k\Big|.$$
		Taking expectation and applying Burkholder-Davis-Gundy inequality, we arrive at
		$$\aligned
		\E \Big[\sup_{s\leq t} \mathcal{E}\big(u_n(s\wedge \tau_R^n)\big) \Big] &\lesssim \mathcal{E}(u(0))+\E  \int_{0}^{t} \big(\mathcal{E}(u_n(s\wedge \tau_R^n))+1\big) \, ds\\
		&\quad+  \E \bigg[\bigg(\int_{0}^{t\wedge \tau_R^n} \sum_k \<v_n, \sigma_k \cdot \nabla u_n\>^2 \,ds\bigg)^{\frac{1}{2}}\bigg].
		\endaligned $$
		Now we estimate the final term by Cauchy inequality and similar computations in \eqref{eq-Q L2}:
		$$\aligned
		\E \bigg[\bigg(\int_{0}^{t\wedge \tau_R^n} \sum_k \<v_n, \sigma_k \cdot \nabla u_n\>^2 \,ds\bigg)^{\frac{1}{2}}\bigg] &\lesssim_\kappa \E \bigg[\bigg(\int_{0}^{t\wedge \tau_R^n} \|v_n\|_{L^2}^2 \|\nabla u_n\|_{L^2}^2 \, ds\bigg)^{\frac{1}{2}}\bigg]\\
		&\leq \E \bigg[\bigg(\sup_{s\leq t} \|v_n(s\wedge \tau_R^n)\|_{L^2}^2 \int_{0}^{t\wedge \tau_R^n} \|\nabla u_n(s)\|_{L^2}^2 \, ds\bigg)^{\frac{1}{2}}\bigg]\\
		&\leq \varepsilon \, \E \Big[\sup_{s\leq t} \mathcal{E}\big(u_n(s\wedge \tau_R^n)\big) \Big]+C(\varepsilon) \, \E \bigg[\int_{0}^{t\wedge \tau_R^n} \mathcal{E}(u_n(s)) \, ds\bigg],
		\endaligned $$
		where in the last step we used Young's inequality with $\varepsilon>0$ a constant and $C(\varepsilon)$ depending on $\varepsilon$.	Inserting this estimate into the above one, we choose $\varepsilon$ sufficiently small to get
		$$\E \Big[\sup_{s\leq t} \mathcal{E}\big(u_n(s\wedge \tau_R^n)\big) \Big] \leq C_{\kappa,\varepsilon} \bigg( \mathcal{E}(u(0))+1+\int_{0}^{t} \E \Big[\sup_{r\leq s} \mathcal{E}\big(u_n(r\wedge \tau_R^n)\big) \Big] \, ds\bigg).$$
		By Gr\"onwall's lemma, we arrive at
		\begin{equation}\label{eq-sup estimate}
			\E \Big[\sup_{s\leq t} \mathcal{E}\big(u_n(s\wedge \tau_R^n)\big) \Big] \leq C_{\kappa,\varepsilon,T} \big(\mathcal{E}(u(0))+1\big).
		\end{equation}
		
		If we expect to take limit $R \rightarrow \infty$ such that the final result is independent of stopping time, we need to show that, for any $n\geq 1$, $\P$-a.s., $\tau_R^n \rightarrow \infty $ as $R\rightarrow \infty$. We discuss by contradiction. Suppose there exist $\delta>0$ and finite time $T_0>0$, such that for any $R \gg 1$,
		$$\P(\tau_R^n<T_0) >\delta.$$
		Then for $t\geq T_0$, it holds
		$$\E \Big[\sup_{s\leq t} \mathcal{E}\big(u_n(s\wedge \tau_R^n)\big) \Big] \geq \E \Big[\textbf{1}_{\{\tau_R^n <T_0\}}\sup_{s\leq t} \mathcal{E}\big(u_n(s\wedge \tau_R^n)\big) \Big] \geq R \, \P(\tau_R^n<T_0 ) >R\delta,$$
		which is impossible since the left-hand side is finite by \eqref{eq-sup estimate}, but the right-hand side tends to $\infty$ as $R\rightarrow \infty$. Thus by Fatou's lemma, we can take limit $R\rightarrow \infty$ in \eqref{eq-sup estimate} and finally arrive at
		$$	\E \Big[\sup_{s\leq T} \mathcal{E}(u_n(s)) \Big] \leq C_{\kappa,\varepsilon,T} \big(\mathcal{E}(u(0))+1\big).$$
		Noticing that the right-hand side is independent of $n$, the proof is complete.
	\end{proof}
	
	Below we will show that $\{(u_n,v_n)\}_{n}$  is a Cauchy sequence in $L^2\big(\Omega;C([0,T]; L^2\times H^{-1})\big)$.
	For $m> n\geq 1$, we denote
	$$w_{m,n}=u_m-u_n, \quad z_{m,n}=v_m-v_n,$$
	then equation \eqref{eq-finite dimension} yields
	\begin{equation}\label{eq-difference}
		\left\{
		\begin{aligned}
			&\partial_t w_{m,n}=z_{m,n},  \\
			& d z_{m,n}=\Delta w_{m,n} \, dt+\big(\Pi_m f(u_m)-\Pi_n f(u_n) \big)\, dt+\sum_k \big(\Pi_m (\sigma_k \cdot \nabla u_m) -\Pi_n (\sigma_k \cdot \nabla u_n) \big)\, dB_t^k.
		\end{aligned}
		\right.
	\end{equation}
	To analyse the difference $\{(w_{m,n},z_{m,n})\}_{m>n}$, we introduce the energy
	$$\mathcal{N}_{m,n}(t):=\|w_{m,n}(t)\|_{L^2}^2+\|z_{m,n}(t)\|_{H^{-1}}^2.$$
	The next lemma shows that this energy vanishes as $m,n \rightarrow \infty$.
	
	\begin{lemma}\label{lemma-Cauchy sequence for SWE1}
		For any finite time $T>0$, it holds
		$$ \lim\limits_{m>n\rightarrow \infty} \E \Big[\sup_{t\leq T} \mathcal{N}_{m,n} (t)\Big]=0;$$
		namely, $\{(u_n,v_n)\}_{n}$ is a Cauchy sequence in $L^2\big(\Omega;C([0,T]; L^2\times H^{-1})\big)$. Hence there exists a limit $(u,v)$ in this space, such that
		$$\lim_{ n\rightarrow \infty} \E \Big[\sup_{t\leq T} \Big(\|u_n(t)-u(t)\|_{L^2}^2+ \|v_n(t)-v(t)\|_{H^{-1}}^2\Big)\Big]=0.$$
	\end{lemma}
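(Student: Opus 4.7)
The plan is to apply It\^o's formula to the $(L^2 \times H^{-1})$-energy $\mathcal{N}_{m,n}$ on the difference system \eqref{eq-difference}, exploit an exact cancellation between the transport identity $\partial_t w_{m,n} = z_{m,n}$ and the wave operator $\Delta w_{m,n}$, and bound every remaining contribution by $C\,\mathcal{N}_{m,n} + r_{m,n}$, where the residual $r_{m,n}$ vanishes as $m,n\to\infty$ thanks to the uniform bound in Lemma \ref{lemma-energy estimate}. Gr\"onwall's inequality then yields the Cauchy property, after which completeness of $L^2(\Omega;C([0,T];L^2\times H^{-1}))$ produces the limit $(u,v)$.

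The key cancellation is as follows. From $\partial_t w_{m,n} = z_{m,n}$ one gets $d\|w_{m,n}\|_{L^2}^2 = 2\<w_{m,n}, z_{m,n}\>\,dt$. On the other hand, applying It\^o's formula to $\|z_{m,n}\|_{H^{-1}}^2 = \<z_{m,n},(I-\Delta)^{-1}z_{m,n}\>$ and using the elementary identity $(I-\Delta)^{-1}\Delta = -I + (I-\Delta)^{-1}$ transforms the drift coming from $\Delta w_{m,n}$ into $-2\<z_{m,n},w_{m,n}\>\,dt + 2\<z_{m,n},(I-\Delta)^{-1}w_{m,n}\>\,dt$. Adding the two identities produces cancellation of $\pm 2\<z_{m,n},w_{m,n}\>$, and the surviving term is controlled by $\mathcal{N}_{m,n}$ via $|\<z_{m,n},(I-\Delta)^{-1}w_{m,n}\>|\leq\|z_{m,n}\|_{H^{-1}}\|w_{m,n}\|_{L^2}$.

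For the nonlinear and noise drifts we decompose
\begin{align*}
\Pi_m f(u_m)-\Pi_n f(u_n)&=\Pi_m\bigl(f(u_m)-f(u_n)\bigr)+(\Pi_m-\Pi_n)f(u_n),\\
A_k:=\Pi_m(\sigma_k\cdot\nabla u_m)-\Pi_n(\sigma_k\cdot\nabla u_n)&=\Pi_m(\sigma_k\cdot\nabla w_{m,n})+(\Pi_m-\Pi_n)(\sigma_k\cdot\nabla u_n).
\end{align*}
The ``Lipschitz'' pieces, once paired with $(I-\Delta)^{-1}z_{m,n}$ or summed in $\|\cdot\|_{H^{-1}}^2$ over $k$, are absorbed into $\mathcal{N}_{m,n}$ using the Lipschitz bound on $f$, the divergence-freeness of $\sigma_k$ (which provides $\sigma_k\cdot\nabla w_{m,n}=\mathrm{div}(\sigma_k w_{m,n})$) and the trace identity $Q(0)=2\kappa I_d$ from Assumption \ref{asp-1}. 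The ``projection'' residuals are handled via $H^1/H^{-1}$-duality together with the frequency gain $\|(\Pi_m-\Pi_n)g\|_{H^{-1}}\leq(1+n^2)^{-1/2}\|g\|_{L^2}$ applied to $g=f(u_n)$ or $g=\sigma_k\cdot\nabla u_n$; Lemma \ref{lemma-energy estimate} then bounds the corresponding expectations by a residual of order $(1+n^2)^{-1}$. The stochastic integral is dealt with by Burkholder-Davis-Gundy and Young's inequality, absorbing a fraction of $\E[\sup \mathcal{N}_{m,n}]$ into the left-hand side as in the proof of Lemma \ref{lemma-energy estimate}.

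Putting all estimates together, one arrives at
\[
\E\Big[\sup_{s\leq t}\mathcal{N}_{m,n}(s)\Big] \leq C_{\kappa,T}\int_{0}^{t} \E\Big[\sup_{r\leq s}\mathcal{N}_{m,n}(r)\Big]\,ds + r_{m,n},
\]
with $r_{m,n}=O\bigl((1+n^{2})^{-1}\bigr)$, so Gr\"onwall's lemma yields $\E[\sup_{t\leq T}\mathcal{N}_{m,n}(t)]\to 0$. The main delicate point is the organisation of the splittings so that every high-frequency residual of the form $(\Pi_m-\Pi_n)(\cdots)$ is always evaluated in $H^{-1}$ via duality with $(I-\Delta)^{-1}z_{m,n}\in H^{1}$; a direct $L^{2}$-type control of these residuals would demand $H^{1}$-regularity of $f(u_n)$ or of $\sigma_k\cdot\nabla u_n$, which is not available from Lemma \ref{lemma-energy estimate} alone.
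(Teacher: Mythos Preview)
Your scheme is correct and is the same as the paper's: It\^o's formula on $\mathcal{N}_{m,n}$, the cancellation $2\langle w_{m,n},z_{m,n}\rangle - 2\langle z_{m,n},w_{m,n}\rangle$ coming from $(I-\Delta)^{-1}\Delta=-I+(I-\Delta)^{-1}$, the split of each difference into a Lipschitz piece and a $(\Pi_m-\Pi_n)$ residual, BDG plus absorption for the martingale, and Gr\"onwall.

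The only noticeable difference is in how you treat the nonlinear projection residual. You estimate $(\Pi_m-\Pi_n)f(u_n)$ in $H^{-1}$ via duality with $(I-\Delta)^{-1}z_{m,n}\in H^1$, gaining the factor $(1+n^2)^{-1/2}$ against $\|f(u_n)\|_{L^2}$. The paper instead keeps the pairing in $L^2$ and uses the frequency gain $\|(\Pi_m-\Pi_n)f(u_m)\|_{L^2}^2\le n^{-2}\|f(u_m)\|_{H^1}^2$, together with $\|f(u_m)\|_{H^1}^2\le C(1+\|u_m\|_{H^1}^2)$, which \emph{is} available since $f$ Lipschitz gives $f'\in L^\infty$ and hence $\nabla f(u_m)=f'(u_m)\nabla u_m\in L^2$. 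So your closing caveat that ``$H^1$-regularity of $f(u_n)$ \ldots\ is not available from Lemma~\ref{lemma-energy estimate} alone'' is too cautious: it follows from Lemma~\ref{lemma-energy estimate} plus the Lipschitz assumption. Both routes work; yours uses one derivative less on $f$, the paper's is slightly more direct. For the noise residual $\sum_k\|(\Pi_m-\Pi_n)(\sigma_k\cdot\nabla u_\bullet)\|_{H^{-1}}^2\lesssim_\kappa n^{-2}\|\nabla u_\bullet\|_{L^2}^2$ the two arguments are identical up to the cosmetic choice of $u_m$ versus $u_n$.
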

	
	\begin{proof}
		To begin with, we estimate $d\mathcal{N}_{m,n}$. First we have $d\|w_{m,n}(t)\|_{L^2}^2=2\<w_{m,n},z_{m,n}\> \,dt$.
		Moreover, by It\^o formula, it holds
		$$\aligned
		&\quad d\|z_{m,n}\|_{H^{-1}}^2=d \big\|(I-\Delta)^{-\frac{1}{2}} z_{m,n}\big\|_{L^2}^2\\
		&=2\big\<(I-\Delta)^{-\frac{1}{2}} z_{m,n}, (I-\Delta)^{-\frac{1}{2}} d z_{m,n}\big\>+\big\<d (I-\Delta)^{-\frac{1}{2}} z_{m,n}, d(I-\Delta)^{-\frac{1}{2}}   z_{m,n}\big\>\\
		&=2\<(I-\Delta)^{-1} z_{m,n}, \Delta w_{m,n}\> \, dt+2\big\<(I-\Delta)^{-1} z_{m,n},\Pi_m f(u_m)-\Pi_n f(u_n)\big\>\, dt\\
		&\quad+ 2\sum_k\big\<(I-\Delta)^{-1} z_{m,n}, \Pi_m (\sigma_k \cdot \nabla u_m) -\Pi_n (\sigma_k \cdot \nabla u_n)  \big\> \, dB_t^k\\
		&\quad+\sum_k \big\|(I-\Delta)^{-\frac{1}{2}}\big(\Pi_m (\sigma_k \cdot \nabla u_m) -\Pi_n (\sigma_k \cdot \nabla u_n)\big) \big\|_{L^2}^2 \, dt.
		\endaligned $$
		We will make further estimates for each finite variation term. Notice that
		$$\aligned
		\big\<(I-\Delta)^{-1} z_{m,n}, \Delta w_{m,n}\big\>&=\big\<(I-\Delta)^{-1} z_{m,n}, (\Delta-I) w_{m,n}\big\>+\big\<(I-\Delta)^{-1} z_{m,n}, w_{m,n}\big\>\\
		&=-\<z_{m,n}, w_{m,n}\>+\big\<(I-\Delta)^{-1} z_{m,n}, w_{m,n}\big\>,
		\endaligned$$
		where the former part can cancel out with $d\|w_{m,n}(t)\|_{L^2}^2$ and the latter one can be controlled as
		$$\Big|\big\<(I-\Delta)^{-1} z_{m,n}, w_{m,n}\big\>\Big| \leq \big\|(I-\Delta)^{-1} z_{m,n}\big\|_{L^2} \|w_{m,n}\|_{L^2} \lesssim \|z_{m,n}\|_{H^{-2}}^2+\|w_{m,n}\|_{L^2}^2 \leq \mathcal{N}_{m,n}(t).$$
		
		For the nonlinear term, Cauchy inequality yields
		$$\aligned
		&\quad \big|\big\<(I-\Delta)^{-1}  z_{m,n},\Pi_m f(u_m)-\Pi_n f(u_n)\big\>\big|\\
		&=\big|\big\<(I-\Delta)^{-1}  z_{m,n},(\Pi_m-\Pi_n) f(u_m)+\Pi_n \big(f(u_m)-f(u_n)\big)\big\>\big|\\
		&\leq \big\|(I-\Delta)^{-1} z_{m,n}\big\|_{L^2} \Big(\big\|(\Pi_m-\Pi_n) f(u_m)\big\|_{L^2}+\big\|f(u_m)-f(u_n)\big\|_{L^2}\Big).
		\endaligned $$
		Since $f$ is Lipschitz, we can easily deduce
		$$\|f(u_m)-f(u_n)\|_{L^2}^2 \leq C\|w_{m,n}\|_{L^2}^2.$$
		Besides, we make use of the definition of finite dimensional projection to obtain
		$$\big\|(\Pi_m-\Pi_n) f(u_m)\big\|_{L^2}^2=\sum_{n<|l|\leq m} \big|\<f(u_m),e_l\>\big|^2=\sum_{n<|l|\leq m} \frac{1}{|l|^2} |l|^{2}\big|\<f(u_m),e_l\>\big|^2\leq \frac{1}{n^2} \|f(u_m)\|_{H^1}^2.$$
		By the definition of Sobolev norm, we can further estimate $\|f(u_m)\|_{H^1}^2$ as
		$$\aligned
		\|f(u_m)\|_{H^1}^2&=\int_{\T^d} \Big(\big|f(u_m)\big|^2+\big|\nabla f(u_m)\big|^2\Big) \, dx \\
		&\leq \int_{\T^d} \Big[C^2\big(1+|u_m|^2\big)+\big|f'(u_m)\big|^2 |\nabla u_m|^2\Big] \, dx\\
		&\leq C \int_{\T^d} \Big(1+|u_m|^2+|\nabla u_m|^2\Big) \, ds\\
		&\leq C\big(1+\|u_m\|_{H^1}^2\big).
		\endaligned $$
		Summarizing the above estimates, Young's inequality yields
		$$\aligned
		\big|\big\<(I-\Delta)^{-1} z_{m,n},\Pi_m f(u_m)-\Pi_n f(u_n)\big\> \big| &\leq C \Big(\big\|(I-\Delta)^{-1} z_{m,n}\big\|_{L^2}^2+\|w_{m,n}\|_{L^2}^2+\frac{1+\|u_m\|_{H^1}^2}{n^2}\Big)\\
		&\leq C\Big(\mathcal{N}_{m,n}(t)+\frac{1+\|u_m\|_{H^1}^2}{n^2}\Big).
		\endaligned $$
		
		Then we use triangle inequality to estimate the quadratic variation term:
		$$\aligned
		&\quad \sum_k \big\|(I-\Delta)^{-\frac{1}{2}} \big(\Pi_m (\sigma_k \cdot \nabla u_m) -\Pi_n (\sigma_k \cdot \nabla u_n)\big)  \big\|_{L^2}^2 \\
		&\leq 2\sum_k \big\|(I-\Delta)^{-\frac{1}{2}}  \big((\Pi_m-\Pi_n)(\sigma_k \cdot \nabla u_m)\big) \big\|_{L^2}^2+2\sum_k \big\|(I-\Delta)^{-\frac{1}{2}}  \Pi_n\big(\sigma_k \cdot \nabla (u_m-u_n)\big) \big\|_{L^2}^2.
		\endaligned $$
		For the former part, it holds
		$$\aligned
		\sum_k \big\|(I-\Delta)^{-\frac{1}{2}}  \big((\Pi_m-\Pi_n)(\sigma_k \cdot \nabla u_m)\big) \big\|_{L^2}^2 &=\sum_k \sum_{n<|l|\leq m} \big|\big\<\sigma_k \cdot \nabla u_m,(I-\Delta)^{-\frac{1}{2}}  e_l\big\>\big|^2\\
		&\lesssim \sum_k \sum_{n<|l|\leq m} (1+|l|^2)^{-1} \big|\big\<\sigma_k \cdot \nabla u_m,  e_l\big\>\big|^2\\
		&\leq \frac{1}{1+n^2} \sum_k  \|\sigma_k \cdot \nabla u_m\|_{L^2}^2 \lesssim_\kappa \frac{1}{n^2} \|\nabla u_m\|_{L^2}^2,
		\endaligned $$
		where the last step follows similarly to \eqref{eq-Q L2}.
		For the latter part, recalling that $\{\sigma_k\}_k$ are divergence-free,  we have
		\begin{equation}\label{eq-star}
			\sum_k \big\|(I-\Delta)^{-\frac{1}{2}}  \Pi_n(\sigma_k \cdot \nabla (u_m-u_n)) \big\|_{L^2}^2 \leq \sum_k \|\sigma_k \cdot \nabla w_{m,n}\|_{H^{-1}}^2 \leq \sum_k \|\sigma_k w_{m,n}\|_{L^2}^2 \lesssim_\kappa \|w_{m,n}\|_{L^2}^2,
		\end{equation}
		and therefore we arrive at
		\begin{equation}\label{eq-quadratic estimate}
			\sum_k \big\|(I-\Delta)^{-\frac{1}{2}} \big(\Pi_m (\sigma_k \cdot \nabla u_m) -\Pi_n (\sigma_k \cdot \nabla u_n)\big)  \big\|_{L^2}^2 \leq C\Big(\mathcal{N}_{m,n}(t)+\frac{\| u_m\|_{H^1}^2}{n^2}\Big).
		\end{equation}
		Summarizing the above estimates and recalling the definition of $\mathcal{N}_{m,n}$, we have
		$$\aligned
		d\mathcal{N}_{m,n}(t)&\leq C\Big(\mathcal{N}_{m,n}(t)+\frac{1+\|u_m\|_{H^1}^2}{n^2}\Big) \, dt\\
		&\quad+2\sum_k\big\<(I-\Delta)^{-1}z_{m,n}, \Pi_m (\sigma_k \cdot \nabla u_m) -\Pi_n (\sigma_k \cdot \nabla u_n)  \big\> \, dB_t^k.
		\endaligned$$
		Integrating it with respect to time, we take supremum and then expectation to get
		\begin{equation}\label{eq-supEmn}
			\begin{split}
				\E \Big[\sup_{t\leq T} \mathcal{N}_{m,n}(t) \Big]&\leq \mathcal{N}_{m,n}(0)+C\,  \E \bigg[\int_{0}^{T}  \Big(\mathcal{N}_{m,n}(t)+\frac{1+\|u_m(t)\|_{H^1}^2}{n^2}\Big) \, dt\bigg]\\
				&+2\E \bigg[\sup_{t\leq T} \Big|\sum_k \int_{0}^t \big\<(I-\Delta)^{-1}z_{m,n}, \Pi_m (\sigma_k \cdot \nabla u_m) -\Pi_n (\sigma_k \cdot \nabla u_n)  \big\> \, dB_s^k\Big| \bigg].
			\end{split}
		\end{equation}
		Denote the last term by $J_{m,n}$. Applying Burkholder-Davis-Gundy inequality, we have
		$$\aligned
		J_{m,n}\leq C \, \E \bigg[\bigg(\int_{0}^{T} \sum_k \big\<(I-\Delta)^{-1}z_{m,n}, \Pi_m (\sigma_k \cdot \nabla u_m) -\Pi_n (\sigma_k \cdot \nabla u_n)  \big\>^2 \, dt\bigg)^\frac{1}{2}\bigg].
		\endaligned $$
		By Cauchy inequality and \eqref{eq-quadratic estimate}, it holds
		$$\aligned
		&\quad \sum_k \big\<(I-\Delta)^{-1}z_{m,n}, \Pi_m (\sigma_k \cdot \nabla u_m) -\Pi_n (\sigma_k \cdot \nabla u_n)  \big\>^2\\
		&\leq \sum_k \big\|(I-\Delta)^{-\frac{1}{2}} z_{m,n}\big\|_{L^2}^2 \big\|(I-\Delta)^{-\frac{1}{2}} \big(\Pi_m (\sigma_k \cdot \nabla u_m) -\Pi_n (\sigma_k \cdot \nabla u_n)\big)  \big\|_{L^2}^2 \\
		&\leq C \big\|z_{m,n}(t)\big\|_{H^{-1}}^2 \Big(\mathcal{N}_{m,n}(t)+\frac{\|u_m(t)\|_{H^1}^2}{n^2}\Big).
		\endaligned $$
		Substituting it into the above estimate, we obtain, for a small $\varepsilon>0$,
		$$\aligned
		J_{m,n}&\leq C \, \E \bigg[\bigg(\int_{0}^{T} \big\| z_{m,n}(t)\big\|_{H^{-1}}^2 \Big(\mathcal{N}_{m,n}(t)+\frac{\| u_m(t)\|_{H^1}^2}{n^2}\Big) \, dt\bigg)^\frac{1}{2}\bigg]\\
		&\leq C \, \E \bigg[\bigg( \sup_{t\leq T} \|z_{m,n}(t)\|_{H^{-1}}^2\int_{0}^{T} \Big(\mathcal{N}_{m,n}(t)+\frac{\|u_m(t)\|_{H^1}^2}{n^2}\Big) \, dt\bigg)^\frac{1}{2}\bigg]\\
		&\leq \varepsilon \, \E \Big[\sup_{t\leq T} \mathcal{N}_{m,n}(t)\Big]+C(\varepsilon) \, \E \bigg[\int_0^T \sup_{s\leq t} \mathcal{N}_{m,n}(s) \, dt\bigg]+C(\varepsilon) \, \E \bigg[\int_{0}^T \frac{\|u_m(t)\|_{H^1}^2}{n^2}\, dt\bigg].
		\endaligned $$
		Recalling \eqref{eq-supEmn}, we can deduce the following estimate:
		$$\aligned
		\E \Big[\sup_{t\leq T} \mathcal{N}_{m,n}(t) \Big]&\leq C \bigg(\mathcal{N}_{m,n}(0)+ \int_{0}^{T}  \E\Big[ \sup_{s\leq t} \mathcal{N}_{m,n}(s) \Big] \, dt+\E \int_{0}^T \frac{1+\|u_m(t)\|_{H^1}^2}{n^2}\, dt\bigg).
		\endaligned $$
		Now we can apply Gr\"onwall's inequality to obtain
		$$\E \Big[\sup_{t\leq T} \mathcal{N}_{m,n}(t) \Big] \leq C_T \bigg(\mathcal{N}_{m,n}(0)+\E \int_{0}^T  \frac{1+\|u_m(t)\|_{H^1}^2}{n^2} \, dt\bigg).$$
		Notice that $\lim\limits_{m>n\rightarrow \infty} \mathcal{N}_{m,n}(0)=0$; and thanks to Lemma \ref{lemma-energy estimate}, one has
		$$\lim\limits_{m>n\rightarrow \infty} \E \int_{0}^T  \frac{1+\|u_m(t)\|_{H^1}^2}{n^2} \, dt=0,$$
		which gives the first result of this lemma. Finally, the completeness of $L^2\big(\Omega;C([0,T]; L^2\times H^{-1})\big)$ ensures the existence of a limit $(u,v)$ of the Cauchy sequence $\{(u_n,v_n)\}_n$.
	\end{proof}
	
	\begin{lemma}\label{lemma-uv regularity}
		The limit stochastic process $(u,v)$ mentioned in Lemma \ref{lemma-Cauchy sequence for SWE1} belongs to the space $L^2(\Omega;L^\infty([0,T];H^1 \times L^2))$ and satisfies the following estimate:
		$$\E \Big[ \|u\|_{L^\infty([0,T];H^1)}^2+\|v\|_{L^\infty([0,T];L^2)}^2\Big] \leq C_T\big(\mathcal{E}(u(0))+1\big).$$
	\end{lemma}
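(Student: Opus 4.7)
The plan is to upgrade the strong convergence from Lemma~\ref{lemma-Cauchy sequence for SWE1} to weak-$\ast$ convergence in $L^\infty([0,T]; H^1 \times L^2)$ almost surely, and then to combine weak-$\ast$ lower semicontinuity of the norm with Fatou's lemma to transfer the uniform estimate of Lemma~\ref{lemma-energy estimate} to the limit $(u,v)$.

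I would first pass to a subsequence (not relabeled) along which $(u_n, v_n) \to (u, v)$ in $C([0,T]; L^2 \times H^{-1})$ holds $\P$-a.s. Fixing such an $\omega$, and assuming $\liminf_n \|u_n(\omega)\|_{L^\infty([0,T]; H^1)}$ is finite (otherwise the pointwise inequality I seek is trivial), I would select a further subsequence $n_k$ along which these norms remain bounded. Using the duality $L^\infty([0,T]; H^1) = L^1([0,T]; H^{-1})^\ast$, which is valid because $H^1$ is reflexive and hence $H^{-1}$ has the Radon--Nikodym property, the Banach--Alaoglu theorem then extracts a sub-subsequence converging weakly-$\ast$ to some $\tilde u_\omega \in L^\infty([0,T]; H^1)$. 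Testing against the dense subclass $L^1([0,T]; L^2) \subset L^1([0,T]; H^{-1})$ and combining with the strong convergence $u_{n_k}(\omega) \to u(\omega)$ in $C([0,T]; L^2)$, I would identify $\tilde u_\omega$ with $u(\omega)$ a.e.\ on $[0,T]$, so that $u(\omega) \in L^\infty([0,T]; H^1)$ and
$$\|u(\omega)\|_{L^\infty([0,T]; H^1)}^2 \leq \liminf_n \|u_n(\omega)\|_{L^\infty([0,T]; H^1)}^2 \qquad \P\text{-a.s.}$$
by weak-$\ast$ lower semicontinuity of the norm. The same argument applied to $v$ with the duality $L^\infty([0,T]; L^2) = L^1([0,T]; L^2)^\ast$ would give the companion bound.

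Summing the two pointwise inequalities and applying Fatou's lemma together with Lemma~\ref{lemma-energy estimate} would yield
$$\E\Big[\|u\|_{L^\infty([0,T]; H^1)}^2 + \|v\|_{L^\infty([0,T]; L^2)}^2\Big] \leq \liminf_n \E\Big[\|u_n\|_{L^\infty([0,T]; H^1)}^2 + \|v_n\|_{L^\infty([0,T]; L^2)}^2\Big] \leq C_T\big(\mathcal{E}(u(0))+1\big),$$
simultaneously giving the integrability $(u,v) \in L^2(\Omega; L^\infty([0,T]; H^1 \times L^2))$ and the claimed quantitative bound. The only real subtlety will be the pointwise identification of $\tilde u_\omega$ with $u(\omega)$, which rests on the density of $L^2$ in $H^{-1}$ ensuring that test functions in $L^1([0,T];L^2)$ separate points of $L^\infty([0,T];H^1)$; the remaining ingredients -- Banach--Alaoglu, weak-$\ast$ lower semicontinuity, and Fatou's lemma -- are routine.
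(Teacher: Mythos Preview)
Your argument is correct and shares the same core mechanism as the paper's proof---Banach--Alaoglu to extract a weak-$\ast$ limit, identification of that limit with $(u,v)$ via the strong convergence of Lemma~\ref{lemma-Cauchy sequence for SWE1}, and weak-$\ast$ lower semicontinuity of the norm---but you execute it at a different level. The paper works directly in the global space $L^2(\Omega;L^\infty([0,T];H^1\times L^2))$, invoking the duality
\[
L^2\big(\Omega;L^\infty([0,T];H^1\times L^2)\big)=\Big(L^2\big(\Omega;L^1([0,T];H^{-1}\times L^2)\big)\Big)^\ast
\]
once, so that a single application of Banach--Alaoglu and lower semicontinuity in that space gives the bound in one stroke. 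You instead pass to an a.s.\ convergent subsequence, apply Banach--Alaoglu \emph{pathwise} in $L^\infty([0,T];H^1)$ and $L^\infty([0,T];L^2)$ for each fixed $\omega$, and then use Fatou's lemma to lift the pointwise inequality to expectation. Your route is slightly longer but arguably more elementary: it relies only on the clean dualities $L^\infty_t H^1=(L^1_t H^{-1})^\ast$ and $L^\infty_t L^2=(L^1_t L^2)^\ast$ for separable reflexive targets, and sidesteps the more delicate question of whether $L^2(\Omega;L^\infty_t X)$ is itself a dual Banach space (which requires care about Bochner versus weak-$\ast$ measurability when $L^\infty_t X$ is nonseparable). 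The paper's approach is terser; yours is more self-contained.
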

	\begin{proof}
		By Lemma \ref{lemma-energy estimate}, $\{(u_n,v_n)\}_n$ is uniformly  bounded in $L^2(\Omega,L^\infty([0,T];H^1 \times L^2))$, which is the dual space of $L^2(\Omega,L^1([0,T];H^{-1} \times L^2))$. By Banach-Alaoglu Theorem, there exist a subsequence  $\{(u_{n_j},v_{n_j})\}_j$ and a limit $(u^\ast, v^\ast) $, such that $\{(u_{n_j},v_{n_j})\}_j$ weak-$\ast$ converges to $(u^\ast, v^\ast)$ in $L^2(\Omega,L^\infty([0,T];H^1 \times L^2))$.
		On the other hand, by Lemma \ref{lemma-Cauchy sequence for SWE1}, $\{(u^n,v^n)\}_n$ converges strongly to $(u,v)$ in the space $L^2(\Omega;C([0,T];L^2\times H^{-1}))$. Hence we can deduce $(u^\ast, v^\ast)=(u,v)$ for a.e. $t\in [0,T]$, implying that the latter pair belongs to $L^2(\Omega,L^\infty([0,T];H^1 \times L^2))$.
		Finally, the norm is weak-$\ast$ lower semi-continuous, so we obtain
		$$\|(u,v)\|_{L^2(\Omega;L^\infty([0,T];H^1\times L^2))} \leq \liminf_{j\rightarrow \infty} \|(u_{n_j},v_{n_j})\|_{L^2(\Omega;L^\infty([0,T];H^1))}\leq C_T^{1/2}\big(\mathcal{E}(u(0))+1\big)^{1/2}.$$
		Taking square gives us the claimed estimate.
	\end{proof}

	Now we can present the proof of Theorem \ref{thm-SWE1 wellposedness}. We first show the limit $(u,v)$ is a solution to \eqref{eq-SWE}, yielding the existence of weak solutions; then we establish the pathwise uniqueness.
	\begin{proof}[Proof of Theorem \ref{thm-SWE1 wellposedness}.]
		\textbf{Existence.} For each $n\geq 1$, the solutions to \eqref{eq-finite dimension} satisfy the following identity for any $\phi \in C^\infty(\T^d)$:
		$$\aligned
		\<u_n(t),\phi\>&=\< \Pi_n u_0, \phi\>+\int_{0}^{t} \<v_n(s),\phi\> \, ds, \\
		\<v_n(t),\phi\>&=\<\Pi_n v_0,\phi\>+\int_{0}^{t} \<u_n(s), \Delta \phi \> \, ds+\int_{0}^{t} \<\Pi_n f(u_n(s)),\phi\> \, ds-\sum_k \int_{0}^{t} \<u_n(s),\sigma_k \cdot \nabla \Pi_n \phi \> \, dB^k_s.
		\endaligned $$
		If we take $n\rightarrow \infty$ in the above equations and prove that each term converges to the corresponding one of identity \eqref{eq-def of weak sol}, then the existence follows.  In fact, the proofs for linear terms can be done by Lemma \ref{lemma-Cauchy sequence for SWE1}; as for the arguments for martingale term, one can find similar proofs in \cite[Theorem 2.2]{FGL21 JEE}. So we are mainly concerned with the nonlinear term here and we will show
		$$\E \bigg[\sup_{t\in [0,T]}\bigg|\int_{0}^{t} \<\Pi_n f(u_n),\phi\> \, ds-\int_{0}^{t} \<f(u),\phi\> \, ds\bigg| \bigg] \rightarrow 0.$$
		By triangle inequality, the left-hand side can be controlled by
		$$\aligned
		&\quad \E \bigg[\int_{0}^{T} \big|\big\<\Pi_n\big(f(u_n)-f(u)\big), \phi \big\>\big| \, ds\bigg]+\E \bigg[\int_{0}^{T} \big|\big\<(\Pi_n-I) f(u),\phi \big\>\big| \, ds\bigg]\\
		&\leq \|\phi\|_{L^2} \, \E \bigg[\int_{0}^{T} \|f(u_n)-f(u)\|_{L^2} \, ds\bigg]  + \|(\Pi_n-I)\phi\|_{L^2} \, \E \bigg[\int_{0}^{T}  \| f(u)\|_{L^2} \, ds\bigg].
		\endaligned $$
		Since we have assumed that $f$ is Lipschitz, it holds $\|f(u_n)-f(u)\|_{L^2} \leq C\|u_n-u\|_{L^2}$, and therefore the first term vanishes as $n\rightarrow \infty$ by Lemma \ref{lemma-Cauchy sequence for SWE1}. Besides, we have $\|f(u)\|_{L^2} \leq C(1+\|u\|_{L^2}^2)^\frac{1}{2}$, which is bounded due to Lemma \ref{lemma-uv regularity}, hence we can get the convergence of the second part. Summarizing the above arguments, we obtain the existence.
		
		\textbf{Pathwise uniqueness.} Suppose $(u^1,v^1)$ and $(u^2,v^2)$ are two solutions of \eqref{eq-SWE} with the same initial value, and they are defined on the same probability space and driven by the same Brownian motions. Then we denote $(w,z):=(u^1-u^2,v^1-v^2)$ which satisfies
		\begin{equation}\nonumber
			\left\{
			\begin{aligned}
				&\partial_t w=z,  \\
				& d z=\Delta w \, dt+\big(f(u^1)-f(u^2) \big)\, dt+\sum_k \sigma_k \cdot \nabla w \, dB_t^k;
			\end{aligned}
			\right.
		\end{equation}
		we aim to show that $(w,z) \equiv 0$. The idea is to apply It\^o's formula to the energy $\mathcal{N}(t)= \|w(t)\|_{L^2}^2+ \|z(t)\|_{H^{-1}}^2$; to this end, by \cite[Theorem 2.13]{RL 1990}, we need to verify the following conditions:
		
		(1) The following term is a continuous martingale taking values in $L^2(\T^d)$:
		$$M_t:=\sum_{k}  \int_{0}^{t} (I-\Delta)^{-\frac{1}{2}}\big(\sigma_{k} \cdot \nabla w(s)\big) \, dB_s^k.$$
		
		(2) For $x(t)=(I-\Delta)^{-\frac{1}{2}} z$ and $x'(t)=(I-\Delta)^{-\frac{1}{2}} \Delta w+(I-\Delta)^{-\frac{1}{2}} \big(f(u^1)-f(u^2)\big)$, it holds
		$$\int_{0}^{T}  \|x(t)\|_{H^1}^2+\|x'(t)\|_{H^{-1}}^2 \, dt<+\infty, \quad \P \text{-a.s.}.$$
		The first one can be proved as
		$$ \E \|M_t\|_{L^2}^2 = \E  \int_{0}^{t} \sum_{k}  \big\|(I-\Delta)^{-\frac{1}{2}}\big(\sigma_{k} \cdot \nabla w(s)\big)\big\|^2_{L^2} \, ds  =\E\int_{0}^{t} \sum_{k} \big\|\sigma_{k} \cdot \nabla w(s)\big\|_{H^{-1}}^2 \, ds.$$
		Recalling that $\{\sigma_k\}_k$ are divergence-free and $w \in L^2(\Omega,L^\infty([0,T];H^1))$, we can further obtain
		$$\E \|M_t\|_{L^2}^2  \leq \E\int_{0}^{t} \sum_{k} \big\|\sigma_{k} w(s)\big\|_{L^2}^2 \, ds \lesssim_\kappa \E\int_{0}^{t} \big\|w(s)\big\|_{L^2}^2 \, ds<+\infty.$$
		This verifies condition (1). As for the second condition, we can utilize the Lipschitz property of $f$ to obtain, $\P$-a.s.,
		$$\aligned
		&\quad \int_{0}^{T}  \|x(t)\|_{H^1}^2+\|x'(t)\|_{H^{-1}}^2 \, dt \\
		&\leq \int_{0}^{T} \|z\|_{L^2}^2+\big\|(I-\Delta)^{-\frac{1}{2}}\Delta w\big\|_{H^{-1}}^2+\big\|(I-\Delta)^{-\frac{1}{2}} \big(f(u^1)-f(u^2)\big) \big\|_{H^{-1}}^2 \, dt\\
		&\lesssim \int_{0}^{T} \|z\|_{L^2}^2+\|w\|_{L^2}^2+\big\|f(u^1)-f(u^2)\big\|_{L^{2}}^2 \, dt\\
		&\lesssim \int_{0}^{T} \|z\|_{L^2}^2+\|w\|_{L^2}^2 \, dt<+\infty,
		\endaligned $$
		where in the last step we used $(w,z)\in L^2(\Omega,L^\infty([0,T];H^1\times L^2))$.
		
		Having verified the conditions (1)-(2), now we can apply It\^o formula to obtain
		$$\aligned
		d\mathcal{N}(t)&=2\<w, z \> \, dt+2\big\<(I-\Delta)^{-1}z, dz\big\>+\big\<d(I-\Delta)^{-\frac{1}{2}} z,d(I-\Delta)^{-\frac{1}{2}} z \big\>\\
		&=2\<w, z \> \, dt+2\big\<(I-\Delta)^{-1} z, \Delta w \big\> \, dt+2 \big\<(I-\Delta)^{-1}z, f(u^1)-f(u^2)\big\> \, dt\\
		&\quad +2\sum_k \big\<(I-\Delta)^{-1}z, \sigma_k \cdot \nabla w  \big\> \, dB_t^k+\sum_k \big\|(I-\Delta)^{-\frac{1}{2}} ( \sigma_k \cdot \nabla w )\big\|_{L^2}^2 \, dt.
		\endaligned $$
		Noticing that $\<(I-\Delta)^{-1} z, \Delta w \>=\<(I-\Delta)^{-1} z, (\Delta-I) w \> +\<(I-\Delta)^{-1} z,  w \>$, it holds
		$$\<w, z \> +\big\<(I-\Delta)^{-1} z, \Delta w \big\>  =\big\<(I-\Delta)^{-1} z,  w \big\>;$$
		moreover, since $f$ is Lipschitz, we have $\|f(u^1)-f(u^2)\|_{L^2} \lesssim \|u^1-u^2\|_{L^2}=\|w\|_{L^2}$. For the quadratic variation term, we can apply similar computations in \eqref{eq-star} to obtain
		$$\sum_k \big\|(I-\Delta)^{-\frac{1}{2}} ( \sigma_k \cdot \nabla w )\big\|_{L^2}^2  \lesssim_\kappa \|w\|_{L^2}^2.$$
		Summarizing these estimates and by Cauchy inequality, we arrive at
		$$\aligned
		d\mathcal{N}(t) &\lesssim_\kappa \big\| (I-\Delta)^{-1} z \big\|_{L^2} \|w\|_{L^2}\, dt + \sum_k \big\<(I-\Delta)^{-1}z, \sigma_k \cdot \nabla w  \big\> \, dB_t^k + \|w\|_{L^2}^2\, dt \\
		&\leq C \mathcal{N}(t) \, dt+\sum_k \big\<(I-\Delta)^{-1}z, \sigma_k \cdot \nabla w  \big\> \, dB_t^k,
		\endaligned $$
		where the last step follows from Young's inequality and the definition of $\mathcal{N}(t)$.
		Integrating from $0$ to $t$ and recalling that $\mathcal{N}(0)=0$, we get
		$$\mathcal{N}(t) \leq C\int_{0}^{t} \mathcal{N}(s) \, ds+\int_{0}^{t} \sum_k \big\<(I-\Delta)^{-1}z, \sigma_k \cdot \nabla w  \big\> \, dB_s^k.$$
		Recalling that $w \in L^2(\Omega,L^\infty([0,T];H^1))$, we can verify that the stochastic integral is a square integrable martingale, hence we take expectation to obtain
		$$\E \, \mathcal{N}(t) \leq C \, \E \int_{0}^{t} \mathcal{N}(s) \, ds.$$
		Then Gr\"onwall's lemma yields $\E \mathcal{N}(t)= 0$ for any $t\in [0,T]$. Noticing that $\mathcal{N}(t) \geq 0$, we obtain $\mathcal{N}(t)=0$, $\P$-a.s. and therefore the pathwise uniqueness follows.
	\end{proof}
	
	\section{Weak existence for \eqref{eq-SWE2}}\label{sec-prf of SWE2 wellposed}
	This section is devoted to proving Theorem \ref{thm-SWE2 wellposedness}.
	We still consider the finite dimensional approximation of \eqref{eq-SWE2}:
	\begin{equation}\label{eq-SWE2 approximation}
		\left\{
		\begin{aligned}
			&\partial_t u_n=v_n,  \\
			& d v_n=\Delta u_n \, dt+\Pi_n f(u_n) \, dt+\sum_k \Pi_n(\sigma_k \cdot \nabla v_n) \, dB_t^k+\kappa \Delta v_n \, dt,
		\end{aligned}
		\right.
	\end{equation}
	with the initial data projected accordingly: $(u_n(0),v_n(0))=(\Pi_n u_0,\Pi_n v_0)$.
	We explain in the following remark that the approach used in Theorem \ref{thm-SWE1 wellposedness} cannot be directly applied here.
	\begin{remark}\label{rmk-SWE2 not Cauchy}
		The method of Lemma \ref{lemma-Cauchy sequence for SWE1} does not work to show that the above sequence $\{(u_n,v_n)\}_n$ is Cauchy in $L^2(\Omega;C([0,T]; H^{-\alpha}\times H^{-\alpha-1}))$ for any $\alpha \in [0,1)$. Indeed, when estimating $d\|v_m-v_n\|_{H^{-\alpha-1}}^2$, there exists a quadratic variation term
		$$\mathcal{S}_{m,n}(v):=\sum_{k} \Big\|(I-\Delta)^{-\frac{\alpha+1}{2}} \big(\Pi_m(\sigma_{k} \cdot \nabla v_m)-\Pi_n(\sigma_{k} \cdot \nabla v_n)\big)\Big\|_{L^2}^2,$$
		which, by the triangle inequality, can be decomposed as two parts:
		$$ \sum_{k} \Big\|(I-\Delta)^{-\frac{\alpha+1}{2}} (\Pi_m-\Pi_n)(\sigma_{k} \cdot \nabla v_m)\Big\|_{L^2}^2+\sum_{k}\Big\|(I-\Delta)^{-\frac{\alpha+1}{2}} \Pi_n \big(\sigma_{k} \cdot \nabla (v_m-v_n)\big) \Big\|_{L^2}^2.$$
		We denote these two terms by $\mathcal{S}_{m,n}^1(v)$ and $\mathcal{S}_{m,n}^2(v)$ respectively.
		
		To apply Gr\"onwall's lemma later, we need $\mathcal{S}_{m,n}^1(v)\rightarrow 0$ as $m>n\rightarrow \infty$ and $\mathcal{S}_{m,n}^2(v)$ to be controlled by $\|v_m-v_n\|_{H^{-\alpha-1}}^2$.
		While the former property can be verified with our current estimates, we are not able to establish the latter. This prevents us from identifying a suitable space in which the sequence $\{(u_n,v_n)\}_n$ is Cauchy.
	\end{remark}
	
	Hence we will apply the compactness method, which relies on Theorem \ref{thm-compact embedding}, to prove the existence of weak solutions. We first establish a pathwise regularity estimate for equation \eqref{eq-SWE2 approximation}.
	\begin{lemma}\label{lemma-energy 2}
		Suppose $(u_0,v_0) \in H^1\times L^2$, then for any $T>0$, it holds $\P$-a.s.,
		\begin{equation}\label{eq-uniform estimate for SWE 2}
			\sup_{n\geq 1} \sup_{t\in [0,T]} \mathcal{E}(u_n(t)) \leq C_T\big(\mathcal{E}(u(0))+1\big)=:C_{ub}^2,
		\end{equation}
		where $\mathcal{E}(u_n(t))$ is defined as in \eqref{eq-def of energy} for the solutions $(u_n,v_n)$ of equation \eqref{eq-SWE2 approximation} and $\mathcal{E}(u(0))$ denotes the corresponding initial energy for \eqref{eq-SWE2}.
	\end{lemma}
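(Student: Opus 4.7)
The plan is to apply It\^o's formula to $\mathcal{E}(u_n(t))=\|u_n(t)\|_{H^1}^2+\|v_n(t)\|_{L^2}^2$ and exploit two favourable cancellations that are specific to the velocity-noise model \eqref{eq-SWE2 approximation}: (i) the stochastic integral vanishes identically, so the resulting energy inequality is purely pathwise, and (ii) the It\^o--Stratonovich correction is absorbed by the dissipation coming from $\kappa\Delta v_n$.

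First, from $\partial_t u_n=v_n$ one computes
$$d\|u_n\|_{H^1}^2=2\bigl\<(I-\Delta)u_n,v_n\bigr\>\,dt=2\<u_n,v_n\>\,dt+2\<\nabla u_n,\nabla v_n\>\,dt.$$
Then applying It\^o's formula to $\|v_n\|_{L^2}^2$ using the second equation of \eqref{eq-SWE2 approximation} yields
$$d\|v_n\|_{L^2}^2=2\<v_n,\Delta u_n\>\,dt+2\<v_n,\Pi_n f(u_n)\>\,dt+2\kappa\<v_n,\Delta v_n\>\,dt+2\sum_k\<v_n,\Pi_n(\sigma_k\cdot\nabla v_n)\>\,dB_t^k+\sum_k\|\Pi_n(\sigma_k\cdot\nabla v_n)\|_{L^2}^2\,dt.$$
Because $v_n\in H_n$ and $\Pi_n$ is self-adjoint, each martingale coefficient satisfies $\<v_n,\Pi_n(\sigma_k\cdot\nabla v_n)\>=\<v_n,\sigma_k\cdot\nabla v_n\>=\tfrac{1}{2}\int_{\T^d}\sigma_k\cdot\nabla(v_n^2)\,dx=0$, where the last equality uses $\nabla\cdot\sigma_k=0$. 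Hence the stochastic integral is identically zero. The transport cross terms also cancel, since $2\<\nabla u_n,\nabla v_n\>+2\<v_n,\Delta u_n\>=0$ after integration by parts. For the remaining deterministic terms, the dissipation $2\kappa\<v_n,\Delta v_n\>=-2\kappa\|\nabla v_n\|_{L^2}^2$ dominates the It\^o correction, for by the same computation as in \eqref{eq-Q L2} one has
$$\sum_k\|\Pi_n(\sigma_k\cdot\nabla v_n)\|_{L^2}^2\le\sum_k\|\sigma_k\cdot\nabla v_n\|_{L^2}^2=\int_{\T^d}(\nabla v_n)^\top Q(x,x)\nabla v_n\,dx=2\kappa\|\nabla v_n\|_{L^2}^2.$$

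Collecting these facts, one obtains the \emph{pathwise} inequality
$$d\mathcal{E}(u_n(t))\le 2\<u_n,v_n\>\,dt+2\<v_n,\Pi_n f(u_n)\>\,dt\le C\bigl(\mathcal{E}(u_n(t))+1\bigr)\,dt,$$
where the last step uses Cauchy--Schwarz, the Lipschitz bound $\|f(u_n)\|_{L^2}\lesssim 1+\|u_n\|_{L^2}$, and Young's inequality. Since there is no stochastic term, Gr\"onwall's lemma applies directly $\P$-a.s. and gives $\sup_{t\in[0,T]}\mathcal{E}(u_n(t))\le C_T\bigl(\mathcal{E}(u(0))+1\bigr)$, with a constant independent of $n$. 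Taking the supremum in $n$ concludes \eqref{eq-uniform estimate for SWE 2}.

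The main subtlety, and the point to highlight in the write-up, is the vanishing of the martingale term: it hinges on the fact that the noise acts on the velocity $v_n$ (placing both factors in the inner product in the same finite-dimensional subspace $H_n$) together with the divergence-free structure of $\{\sigma_k\}_k$. This is precisely what fails in \eqref{eq-SWE}, where the stochastic inner product $\<v_n,\sigma_k\cdot\nabla u_n\>$ need not vanish and must be controlled via Burkholder--Davis--Gundy and a stopping-time argument in expectation, as in Lemma \ref{lemma-energy estimate}. No further compactness or uniqueness considerations are needed at this stage; the pathwise bound \eqref{eq-uniform estimate for SWE 2} is the starting point for the tightness arguments that follow in the proof of Theorem \ref{thm-SWE2 wellposedness}.
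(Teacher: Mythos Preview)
Your proposal is correct and follows essentially the same route as the paper's proof: apply It\^o's formula to $\mathcal{E}(u_n(t))$, observe that the martingale term vanishes by the divergence-free property of $\sigma_k$ together with $v_n\in H_n$, absorb the It\^o correction $\sum_k\|\Pi_n(\sigma_k\cdot\nabla v_n)\|_{L^2}^2\le 2\kappa\|\nabla v_n\|_{L^2}^2$ into the dissipation $-2\kappa\|\nabla v_n\|_{L^2}^2$, and conclude by a pathwise Gr\"onwall argument. The only cosmetic difference is that the paper writes $d\|u_n\|_{H^1}^2=2\<u_n,v_n\>\,dt-2\<\Delta u_n,v_n\>\,dt$ directly, whereas you integrate by parts once more; your additional commentary contrasting this with Lemma~\ref{lemma-energy estimate} is accurate and helpful.
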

	
	\begin{proof}
		Fix any $n\geq 1$ and we will make energy estimate below. As shown before, it holds
		$$d\|u_n\|_{H^1}^2=d\big\|(I-\Delta)^{\frac{1}{2}} u_n\big\|_{L^2}^2=2\<u_n,v_n\>\, dt-2\<\Delta u_n, v_n\> \,dt.$$
		Moreover, by It\^o formula, we have
		$$\aligned
		d\|v_n\|_{L^2}^2&=2\<v_n,dv_n\>+\<dv_n,dv_n\>\\
		&=2\<v_n,\Delta u_n\> \, dt+2\big\<v_n,\Pi_n f(u_n)\big\> \, dt+2\sum_{k} \big\<v_n, \Pi_n(\sigma_k \cdot \nabla v_n)\big\> dB_t^k\\
		&\quad + 2\kappa \<v_n,\Delta v_n\> \, dt+ \sum_{k} \big\|\Pi_n(\sigma_k \cdot \nabla v_n)\big\|_{L^2}^2 \, dt\\
		&\leq 2\<v_n,\Delta u_n\> \, dt+2\<v_n,f(u_n)\> \, dt,
		\endaligned $$
		where the last step follows from the divergence-free property of $\{\sigma_k\}_k$ and similar computations as those in \eqref{eq-Q L2}.
		Recalling that we have assumed that $f$ is Lipschitz, hence Cauchy and Young inequalities yield
		$$d\mathcal{E}(u_n(t)) \leq 2 \|v_n\|_{L^2} \big(\|f(u_n)\|_{L^2}+\|u_n\|_{L^2}\big) \, dt \leq C \big(\mathcal{E}(u_n(t))+1\big) \, dt. $$
		Integrating the above inequality and applying Gr\"onwall's lemma, we obtain
		$$\mathcal{E}(u_n(t)) \leq C_T\big(\mathcal{E}(u_n(0))+1\big) \leq C_T\big(\mathcal{E}(u(0))+1\big).$$
		Since the right-hand side is independent of $n\geq 1$ and $t\in [0,T]$, we can take supremum and therefore complete the proof.
	\end{proof}
	
	The above lemma implies that $\P$-a.s., $\{(u_n,v_n)\}_n \subset L^\infty([0,T];H^1\times L^2)$; actually, one can directly deduce that $\{u_n\}_n$ also belongs to $W^{1,\infty} ([0,T];L^2)$ due to the relationship $\partial_t u_n=v_n$. Hence we can apply the point (1) in Theorem \ref{thm-compact embedding}.
	
	In order to utilize the second point of Theorem \ref{thm-compact embedding}, we need to show that, for $\rho >\frac{d}{2}+1$, it holds $\P$-a.s. that $\{v_n\}_n \subset W^{\frac{1}{3},4}([0,T];H^{-\rho})$, with the corresponding norm given by
	$$\|v_n\|_{W^{\frac{1}{3},4} ([0,T];H^{-\rho})}^4=\|v_n\|_{L^4([0,T];H^{-\rho})}^4+\int_{0}^{T} \int_{0}^{T} \frac{\|v_n(t)-v_n(s)\|_{H^{-\rho}}^4}{|t-s|^{\frac{7}{3}}} \, dtds.$$
	Thanks to Lemma \ref{lemma-energy 2}, $\{v_n\}_n \subset L^4([0,T];H^{-\rho})$ $\P$-a.s.. So it is enough to show that, for any $n\geq 1$, the double integral is finite in the sense of expectation, which relies on the following estimate.
	\begin{lemma}\label{lemma-fractional sobolev estimate}
		For any $T>0$ and $\rho>\frac{d}{2}+1$, we can find a finite constant $C$ depending on $T, \kappa$ and $C_{ub}$ but independent of $n$, such that for any $0\leq s<t \leq T$, the approximated sequence satisfies
		$$\sup_{n\geq 1} \E\|v_n(t)-v_n(s)\|_{H^{-\rho}}^4 \leq C |t-s|^2.$$
	\end{lemma}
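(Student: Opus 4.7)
The plan is to start from the integral form of equation \eqref{eq-SWE2 approximation} for $v_n$, decomposing the difference as a sum of three drift integrals and one stochastic integral:
\begin{equation*}
v_n(t)-v_n(s)=\int_s^t \Delta u_n\,dr+\int_s^t \Pi_n f(u_n)\,dr+\kappa\int_s^t \Delta v_n\,dr+\sum_k \int_s^t \Pi_n(\sigma_k\cdot\nabla v_n)\,dB_r^k,
\end{equation*}
and then bound the fourth moment in $H^{-\rho}$ of each piece separately, using the $\P$-a.s. uniform energy estimate $\mathcal{E}(u_n)\le C_{ub}^2$ from Lemma \ref{lemma-energy 2}. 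The four pieces together should yield $C|t-s|^2$.

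For the three drift terms I would use the fact that $\rho>\tfrac{d}{2}+1\ge 2$, so in particular $\|\Delta w\|_{H^{-\rho}}\lesssim \|w\|_{H^{2-\rho}}\lesssim \|w\|_{L^2}$ for any $w$. Together with the Lipschitz bound $\|f(u_n)\|_{L^2}\lesssim 1+\|u_n\|_{L^2}$ and the self-adjoint projection $\Pi_n$, this gives pathwise
\begin{equation*}
\bigg\|\int_s^t \Delta u_n\,dr\bigg\|_{H^{-\rho}}+\bigg\|\int_s^t \Pi_n f(u_n)\,dr\bigg\|_{H^{-\rho}}+\kappa\bigg\|\int_s^t \Delta v_n\,dr\bigg\|_{H^{-\rho}}\lesssim_{\kappa} (1+C_{ub})|t-s|.
\end{equation*}
Raising to the fourth power and using $|t-s|^4\le T^2|t-s|^2$ absorbs these contributions into $C|t-s|^2$.

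For the stochastic term $M_{s,t}:=\sum_k\int_s^t \Pi_n(\sigma_k\cdot\nabla v_n)\,dB_r^k$, I would apply Burkholder--Davis--Gundy in the Hilbert space $H^{-\rho}$ to get
\begin{equation*}
\E\,\|M_{s,t}\|_{H^{-\rho}}^4\lesssim \E\bigg(\int_s^t \sum_k \|\Pi_n(\sigma_k\cdot\nabla v_n)\|_{H^{-\rho}}^2\,dr\bigg)^2.
\end{equation*}
Since $\rho\ge 1$ implies $H^{-1}\hookrightarrow H^{-\rho}$, I bound the integrand by using the divergence-free property $\sigma_k\cdot\nabla v_n=\nabla\cdot(\sigma_k v_n)$ to write $\|\sigma_k\cdot\nabla v_n\|_{H^{-1}}\le \|\sigma_k v_n\|_{L^2}$, and then use the pointwise trace identity $\sum_k |\sigma_k(x)|^2=\mathrm{tr}\,Q(0)=2\kappa d$ coming from Assumption \ref{asp-1}-(A1) to obtain
\begin{equation*}
\sum_k \|\Pi_n(\sigma_k\cdot\nabla v_n)\|_{H^{-\rho}}^2\le \sum_k \|\sigma_k v_n\|_{L^2}^2=\int_{\T^d}|v_n|^2\sum_k|\sigma_k|^2\,dx=2\kappa d\,\|v_n\|_{L^2}^2\le 2\kappa d\,C_{ub}^2.
\end{equation*}
Substituting back gives $\E\,\|M_{s,t}\|_{H^{-\rho}}^4\lesssim (\kappa d C_{ub}^2)^2|t-s|^2$, uniformly in $n$.

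The main subtlety is not conceptual but bookkeeping: I need to ensure the quadratic-variation bound is integrable in $r$ (not just finite) so that BDG directly gives the $|t-s|^2$ rate, rather than a weaker H\"older exponent; this is why reducing the $H^{-\rho}$ norm to an $H^{-1}$ norm via the divergence-free trick is essential, and why the trace identity (rather than any estimate involving $\|Q\|_{L^1}$) is the right tool here, since $\kappa$ is fixed and not assumed small in this section. Combining the four estimates and taking the supremum over $n$ yields the claim with $C=C(T,\kappa,C_{ub})$.
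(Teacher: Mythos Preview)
Your proof is correct and follows the same overall structure as the paper's: decompose $v_n(t)-v_n(s)$ via the integral form of \eqref{eq-SWE2 approximation}, bound the three drift pieces pathwise using the energy estimate of Lemma~\ref{lemma-energy 2} (and $\rho>2$ so that $\|\Delta w\|_{H^{-\rho}}\lesssim\|w\|_{L^2}$), and control the martingale part by BDG. The one genuine difference is in how the quadratic variation of the stochastic term is handled. The paper expands $\|\sigma_k\cdot\nabla v_n\|_{H^{-\rho}}^2$ in Fourier modes, moves the gradient onto $e_l$ by the divergence-free property, applies $Q(0)=2\kappa I_d$ to $\nabla e_l$, and then sums over $l$ using $\sum_{l}|l|^2(1+|l|^2)^{-\rho}<\infty$; this is precisely the step where the hypothesis $\rho>\tfrac{d}{2}+1$ is invoked in the paper. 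Your route is more direct: embed $H^{-\rho}\supset H^{-1}$, use $\|\sigma_k\cdot\nabla v_n\|_{H^{-1}}\le\|\sigma_k v_n\|_{L^2}$, and sum via the pointwise trace identity $\sum_k|\sigma_k(x)|^2=\operatorname{tr}Q(0)=2\kappa d$. This avoids the Fourier summability argument altogether and only needs $\rho\ge 1$ for the martingale part (the condition $\rho>2$ is still used for the drift terms). Both arguments yield the same bound $\E\|M_{s,t}\|_{H^{-\rho}}^4\lesssim_{\kappa,d}C_{ub}^4|t-s|^2$.
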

	\begin{proof}
		We fix any $n\geq 1$. For any $0\leq s<t \leq T$, integrating equation \eqref{eq-SWE2 approximation} yields
		$$\aligned
		\|v_n(t)-v_n(s)\|_{H^{-\rho}} &\leq \int_{s}^{t} \|u_n(r)\|_{H^{-\rho+2}} \, dr+ \int_{s}^{t} \big\|f(u_n(r))\big\|_{H^{-\rho}} \, dr\\
		&+ \bigg\| \sum_{k} \int_{s}^{t} \Pi_n (\sigma_{k} \cdot \nabla v_n(r)) \, dB_r^k\bigg\|_{H^{-\rho}} +\kappa \int_{s}^{t} \|v_n(r)\|_{H^{-\rho+2}} \, dr.
		\endaligned$$
		We denote four terms on the right-hand side of the above inequality as $I_1, I_2, I_3, I_4$ respectively. Since $\rho>\frac{d}{2}+1\geq 2$, we apply H\"older's inequality to obtain
		$$\E I_1^4 \leq |t-s|^3 \, \E \int_{s}^{t}  \|u_n(r)\|_{H^{-\rho+2}}^4 \, dr \leq C_{ub}^4 \, |t-s|^4.$$
		Similar estimates can be applied to $I_4$. For the term $I_2$, recalling that $f$ is Lipschitz, we have
		$$\E I_2^4 \leq |t-s|^3 \, \E \int_{s}^{t}  \big\|f(u_n(r))\big\|_{L^2}^4 \, dr \lesssim |t-s|^3 \, \E \int_{s}^{t} \big(1+\|u_n(r)\|_{L^2}^2\big)^2 \, dr \lesssim \big(1+2C_{ub}^2+C_{ub}^4\big) |t-s|^4. $$
		Finally, we can apply Burkholder-Davis-Gundy inequality to estimate $I_3$:
		$$\E I_3^4 \lesssim \, \E \bigg[ \Big|\sum_{k}  \int_{s}^{t} \big\| \Pi_n(\sigma_{k} \cdot \nabla v_n(r))\big\|_{H^{-\rho}}^2  \, dr\Big|^2 \bigg].$$
		By the definition of Sobolev norm and the divergence-free property of  $\sigma_{k}$, it holds
		$$\aligned
		\sum_{k} \big\|\sigma_k \cdot \nabla v_n(r)\big\|_{H^{-\rho}}^2
		&= \sum_{k} \sum_{l \in \Z^d} \big(1+|l|^2\big)^{-\rho}
		\big| \< \sigma_k \cdot\nabla v_n(r), e_l \>\big|^2\\
		&= \sum_{l \in \Z^d} \big(1+|l|^2\big)^{-\rho}\sum_{k} \big| \< v_n(r), \sigma_k\cdot \nabla e_l \> \big|^2.
		\endaligned $$
		By Cauchy inequality and the assumption \eqref{eq-asp on Q}, we have
		$$\sum_{k} \big| \< v_n(r), \sigma_k \cdot \nabla e_l \> \big|^2\leq \|v_n(r)\|_{L^2}^2 \sum_{k} \|\sigma_k \cdot \nabla e_l\|_{L^2}^2=2\kappa \|v_n(r)\|_{L^2}^2 \|\nabla e_l\|_{L^2}^2.$$
		Substituting it into the above inequality yields
		$$\sum_{k} \big\|\sigma_k \cdot \nabla v_n(r)\big\|_{H^{-\rho}}^2 \lesssim_\kappa \|v_n(r)\|_{L^2}^2 \sum_{l \in \Z^d} \big(1+|l|^2\big)^{-\rho} \|\nabla e_l\|_{L^2}^2 \lesssim  \|v_n(r)\|_{L^2}^2 \sum_{l \in \Z^d} \frac{|l|^2}{(1+|l|^2)^{\rho}}. $$
		Since $\rho>d/2+1$, the sum with respect to $l$  is finite and we finally arrive at
		$$\sum_{k}  \big\| \Pi_n(\sigma_{k} \cdot \nabla v_n(r))\big\|_{H^{-\rho}}^2  \lesssim  \|v_n(r)\|_{L^2}^2.$$
		Hence we apply Lemma \ref{lemma-energy 2} to arrive at $\E I_3^4 \lesssim C_{ub}^4 \, |t-s|^2 $. Summarizing the estimates for $I_i, i=1,\ldots,4$ and noticing that $|t-s|\leq T$, we complete the proof.
	\end{proof}
	Based on the above lemma, we can claim that
	$$\E \int_{0}^{T} \int_{0}^{T} \frac{\|v_n(t)-v_n(s)\|_{H^{-\rho}}^4}{|t-s|^{7/3}} \, dtds\leq  \int_{0}^{T} \int_{0}^{T} \frac{1}{|t-s|^\frac{1}{3}} \, dtds <+\infty,$$
	and therefore $\P$-a.s., $\{v_n\}_n \subset W^{\frac{1}{3},4}([0,T];H^{-\rho})$. Then applying Theorem \ref{thm-compact embedding}, one has
	\begin{corollary}\label{coro-tightness}
		For $n\geq 1$, we denote $\mathcal{L}_n$ as the joint law of $(u_n,v_n)$. The sequence $\{\mathcal{L}_n\}_n$ is tight on $\mathcal{X}:=C([0,T];H^{1-\gamma}\times H^{-\gamma})$, where $\gamma \in (0,\frac{1}{2})$.
	\end{corollary}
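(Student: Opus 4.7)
The plan is to deduce tightness from the two compact embeddings in Theorem \ref{thm-compact embedding}, together with the pathwise uniform bound from Lemma \ref{lemma-energy 2} and the fractional time-regularity estimate of Lemma \ref{lemma-fractional sobolev estimate}. Concretely, I would introduce the auxiliary Banach space
$$
\mathcal{Y}:=\Big(L^\infty([0,T];H^1)\cap W^{1,\infty}([0,T];L^2)\Big)\times\Big(L^\infty([0,T];L^2)\cap W^{\frac{1}{3},4}([0,T];H^{-\rho})\Big),
$$
endowed with the natural sum of norms, and fix some $\rho>\frac{d}{2}+1$. By Theorem \ref{thm-compact embedding}, the inclusion $\mathcal{Y}\hookrightarrow \mathcal{X}$ is compact, so the closed balls $B_R\subset\mathcal{Y}$ are relatively compact subsets of $\mathcal{X}$. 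It then suffices to prove $\sup_{n\ge1}\P\big((u_n,v_n)\notin B_R\big)\to 0$ as $R\to\infty$.

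I would bound this probability via a union bound over the four seminorms defining the $\mathcal{Y}$-norm. The first three are handled by the $\P$-a.s.\ uniform bound $\mathcal{E}(u_n(t))\le C_{ub}^2$ from Lemma \ref{lemma-energy 2}: this directly controls $\|u_n\|_{L^\infty H^1}$ and $\|v_n\|_{L^\infty L^2}$, and since $\partial_t u_n=v_n$ in the approximate system, also $\|u_n\|_{W^{1,\infty}L^2}=\|v_n\|_{L^\infty L^2}$. Hence, for any $R>4C_{ub}$, these three contributions vanish identically in $n$.

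For the remaining piece, I would show that the Sobolev--Slobodeckij norm
$$
\|v_n\|_{W^{\frac{1}{3},4}([0,T];H^{-\rho})}^4=\|v_n\|_{L^4([0,T];H^{-\rho})}^4+\int_0^T\!\!\int_0^T \frac{\|v_n(t)-v_n(s)\|_{H^{-\rho}}^4}{|t-s|^{7/3}}\,dt\,ds
$$
has uniformly bounded expectation. The first summand is controlled pathwise by $T\,C_{ub}^4$ because $L^2\hookrightarrow H^{-\rho}$. For the double integral, Fubini and Lemma \ref{lemma-fractional sobolev estimate} give
$$
\E\int_0^T\!\!\int_0^T \frac{\|v_n(t)-v_n(s)\|_{H^{-\rho}}^4}{|t-s|^{7/3}}\,dt\,ds\ \le\ C\int_0^T\!\!\int_0^T |t-s|^{-1/3}\,dt\,ds\ <\ \infty,
$$
uniformly in $n$. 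Markov's inequality then yields $\P\big(\|v_n\|_{W^{1/3,4}H^{-\rho}}>R\big)\lesssim R^{-4}$, uniformly in $n$, completing the union bound.

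I do not foresee a genuine obstacle here: the two quantitative inputs (Lemmas \ref{lemma-energy 2} and \ref{lemma-fractional sobolev estimate}) have already done the hard work, and the remaining argument is the standard compactness-via-embedding recipe. The only point requiring mild care is to make sure the compact embedding is applied to the correct product space and that the Slobodeckij seminorm is indeed uniformly integrable over $[0,T]^2$, which the exponent $7/3<1+4\cdot\frac{1}{3}\cdot 2$ in Lemma \ref{lemma-fractional sobolev estimate} ensures.
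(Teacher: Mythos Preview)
Your proposal is correct and follows essentially the same approach as the paper: combine the pathwise energy bound of Lemma \ref{lemma-energy 2} (which controls $\|u_n\|_{L^\infty H^1}$, $\|v_n\|_{L^\infty L^2}$, and hence $\|u_n\|_{W^{1,\infty}L^2}$ via $\partial_t u_n=v_n$) with the expectation bound of Lemma \ref{lemma-fractional sobolev estimate} on the Slobodeckij seminorm of $v_n$, and then invoke the compact embeddings of Theorem \ref{thm-compact embedding} together with Markov's inequality. The only cosmetic point is that the paper already uses the symbol $\mathcal{Y}$ for the path space $C([0,T];\mathbb{R}^{\mathbb{N}})$, so you should pick a different letter for your auxiliary space.
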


	With the above preparations in hand, we are now ready to establish the existence of weak solutions to equation \eqref{eq-SWE2}.
	\begin{proof}[Proof of Theorem \ref{thm-SWE2 wellposedness}.]
		We regard the sequence of Brownian motions $B:=\big\{B^k: k \in \mathbb{N}\big\}$ as a random variable with values in $\mathcal{Y}:= C([0,T],\mathbb{R}^{\mathbb{N}})$. Denote $P_n$ as the joint law of $(u_n, v_n, B)$ for each $n\geq 1$, we claim that  $\{P_n\}_{n}$ is tight on $\mathcal{X} \times \mathcal{Y}$.
		
		By Prohorov theorem, we can find a subsequence $\{n_j\}_{j\geq 1}$ such that $P_{n_j}$ converges weakly to some probability measure $P$ on $\mathcal{X} \times \mathcal{Y}$ as $j \rightarrow \infty$. Applying Skorohod representation theorem, there exists a new probability space $(\tilde{\Omega},\tilde{\mathcal{F}},\tilde{\P})$, on which we can define a family $\tilde{B}^{n_j}:=\big\{\tilde{B}^{n_j,k}: k\in \mathbb{N}\big\}$ of independent Brownian motions and stochastic processes $\{(\tilde{u}_{n_j}, \tilde{v}_{n_j})\}_{j\geq 1}$, together with $(\tilde{u},\tilde{v})$ and Brownian motion $\tilde{B}$, such that
		
		(1) for any $j\geq 1$, $(\tilde{u}_{n_j}, \tilde{v}_{n_j},\tilde{B}^{n_j})$ has the same joint law as $(u_{n_j}, v_{n_j}, B)$;
		
		(2) $\tilde{\P}$-a.s., $(\tilde{u}_{n_j}, \tilde{v}_{n_j}, \tilde{B}^{n_j})$ converges to the limit $(\tilde{u},\tilde{v},\tilde{B})$ in $\mathcal{X} \times \mathcal{Y}$ as $j\rightarrow \infty$.
		
		Once we can show that $(\tilde{u},\tilde{v},\tilde{B})$ satisfies the identity \eqref{eq-SWE2 sol def}, then the existence holds. Since $(\tilde{u}_{n_j}, \tilde{v}_{n_j},\tilde{B}^{n_j})$ has the same joint law as $(u_{n_j}, v_{n_j}, B)$, we conclude that the former triple satisfies the following weak formulation: for $\phi \in C^\infty$,
		\begin{equation}\label{eq-SWE2 existence appro}
			\begin{split}
				\<\tilde{u}_{n_j}(t),\phi\>&=\< \Pi_{n_j} u_0, \phi\>+\int_{0}^{t} \<\tilde{v}_{n_j}(s),\phi\> \, ds, \\
				\<\tilde{v}_{n_j}(t),\phi\>&=\<\Pi_{n_j} v_0,\phi\>+\int_{0}^{t} \<\tilde{u}_{n_j}(s), \Delta \phi \> \, ds+\int_{0}^{t} \big\< f(\tilde{u}_{n_j}(s)), \Pi_{n_j}\phi \big\> \, ds\\
				&\quad -\sum_k \int_{0}^{t} \<\tilde{v}_{n_j}(s),\sigma_k \cdot \nabla \Pi_{n_j} \phi \> \, d\tilde{B}^{n_j,k}_s+\kappa \int_0^t \<\tilde{v}_{n_j}(s), \Delta \phi \> \, ds.
			\end{split}
		\end{equation}
		Next, we need to establish the convergence for each term. Before doing so, we state a lemma that provides the regularity of $(\tilde{u},\tilde{v})$. Since the proof is similar to that of \cite[Corollary 3.7]{FL21 PTRF}, we omit it here.
		\begin{lemma}\label{lemma-regularity of limit process}
			The limit process $(\tilde{u},\tilde{v})$ satisfies the following bounds $\tilde{\P}$-a.s.,
			$$\|\tilde{v}\|_{L^\infty([0,T];L^2)} \leq C_{ub}, \quad \|\tilde{u}\|_{L^\infty([0,T];H^1)} \leq  C_{ub}.$$
		\end{lemma}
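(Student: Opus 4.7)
My plan is to lift the pathwise uniform bound of Lemma~\ref{lemma-energy 2} from the approximations $(u_n,v_n)$ to the Skorokhod copies $(\tilde u_{n_j},\tilde v_{n_j})$, and then pass to the limit by weak-$\ast$ compactness combined with lower semicontinuity of the norm.

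First, since $(\tilde u_{n_j},\tilde v_{n_j},\tilde B^{n_j})$ and $(u_{n_j},v_{n_j},B)$ share the same joint law, and Lemma~\ref{lemma-energy 2} gives a $\mathbb{P}$-a.s.\ bound of the deterministic constant $C_{ub}^2$ on $\mathcal{E}(u_{n_j}(t))$ uniformly in $t\in[0,T]$ and $j\ge 1$, the same bound transfers to the new probability space:
$$
\tilde{\mathbb{P}}\text{-a.s.,}\qquad \sup_{j\ge 1}\sup_{t\in[0,T]}\bigl(\|\tilde u_{n_j}(t)\|_{H^1}^2+\|\tilde v_{n_j}(t)\|_{L^2}^2\bigr)\le C_{ub}^2.
$$
Fix an $\tilde\omega$ in the intersection of this full-measure set with the full-measure set on which $(\tilde u_{n_j},\tilde v_{n_j})\to(\tilde u,\tilde v)$ in $\mathcal{X}=C([0,T];H^{1-\gamma}\times H^{-\gamma})$; work with this $\tilde\omega$ fixed throughout the rest of the argument.

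Next, at this $\tilde\omega$ the sequence $\{\tilde u_{n_j}\}_j$ is bounded in $L^\infty([0,T];H^1)$ and $\{\tilde v_{n_j}\}_j$ is bounded in $L^\infty([0,T];L^2)$. Since these are the duals of the separable spaces $L^1([0,T];H^{-1})$ and $L^1([0,T];L^2)$ respectively, the Banach--Alaoglu theorem yields a further subsequence (depending on $\tilde\omega$) converging weakly-$\ast$ to some limits $(\tilde u^\ast,\tilde v^\ast)$ in these spaces, with
$$
\|\tilde u^\ast\|_{L^\infty([0,T];H^1)}\le\liminf_{j\to\infty}\|\tilde u_{n_j}\|_{L^\infty([0,T];H^1)}\le C_{ub},
$$
and analogously $\|\tilde v^\ast\|_{L^\infty([0,T];L^2)}\le C_{ub}$, by weak-$\ast$ lower semicontinuity of the norm.

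Finally, I identify $(\tilde u^\ast,\tilde v^\ast)$ with $(\tilde u,\tilde v)$. Indeed, for any test function $\phi\in C^\infty(\mathbb{T}^d)$ and any $\psi\in L^1([0,T])$, the continuous embeddings $H^1\hookrightarrow H^{1-\gamma}$ and $L^2\hookrightarrow H^{-\gamma}$ give
$$
\int_0^T\langle \tilde u_{n_j}(t),\phi\rangle\psi(t)\,dt\longrightarrow \int_0^T\langle \tilde u(t),\phi\rangle\psi(t)\,dt
$$
from the strong convergence in $\mathcal{X}$, while the weak-$\ast$ convergence gives the same limit with $\tilde u$ replaced by $\tilde u^\ast$; the same argument works for the velocity component. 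Hence $\tilde u=\tilde u^\ast$ and $\tilde v=\tilde v^\ast$ as elements of $L^\infty([0,T];H^1)$ and $L^\infty([0,T];L^2)$, respectively, and the claimed bounds follow. The main delicate point is the measurability-of-representative question when passing to the weak-$\ast$ limit pathwise, but the identification step above produces a genuine $L^\infty$ representative that coincides almost everywhere with the strong limit obtained from Skorokhod, so this is harmless.
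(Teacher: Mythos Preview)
Your argument is correct and is essentially the standard weak-$\ast$ compactness plus lower semicontinuity argument that the paper has in mind: the paper omits the proof here and points to \cite[Corollary~3.7]{FL21 PTRF}, and your reasoning mirrors that reference as well as the analogous Lemma~\ref{lemma-uv regularity} earlier in the paper, adapted to the pathwise setting afforded by the deterministic bound in Lemma~\ref{lemma-energy 2}. One minor remark: you could bypass Banach--Alaoglu by arguing pointwise in $t$ (strong convergence in $H^{1-\gamma}$ together with the uniform $H^1$ bound gives $\|\tilde u(t)\|_{H^1}\le C_{ub}$ directly via weak lower semicontinuity of the $H^1$ norm), but your route is equally valid.
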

		We now discuss the convergence for the nonlinear and martingale terms in \eqref{eq-SWE2 existence appro}; the linear terms can be handled directly via point (2) above. By triangle inequality, it holds
		$$\aligned
		&\quad \bigg|\int_{0}^{t}  \big\< f(\tilde{u}_{n_j}(s)),\Pi_{n_j} \phi \big\> \, ds-\int_{0}^{t} \big\<f(\tilde{u}(s)),\phi \big\> \, ds\bigg|\\
		&\leq  \int_{0}^{t} \big|\big\<f(\tilde{u}_{n_j}(s))-f(\tilde{u}(s)),\Pi_{n_j} \phi \big\> \big|\, ds+\int_{0}^{t} \big|\big\<f(\tilde{u}(s)),\Pi_{n_j} \phi-\phi \big\>\big| \, ds.
		\endaligned $$
		For the first part, noticing that $f$ is Lipschitz, Cauchy-Schwarz inequality yields
		$$\aligned
		\int_{0}^{t} \big|\big\<f(\tilde{u}_{n_j}(s))-f(\tilde{u}(s)),\Pi_{n_j} \phi \big\> \big|\, ds& \leq \int_{0}^{t} \big\|f(\tilde{u}_{n_j}(s))-f(\tilde{u}(s))\big\|_{L^2} \|\Pi_{n_j} \phi\|_{L^2} \, ds \\
		&\lesssim \|\phi\|_{L^2} \int_{0}^{t} \|\tilde{u}_{n_j}(s)-\tilde{u}(s)\|_{L^2} \, ds\\
		&\leq T  \|\phi\|_{L^2} \|\tilde{u}_{n_j}-\tilde{u}\|_{C([0,T];L^2)},
		\endaligned $$
		which tends to zero due to the fact that $\tilde{\P}$-a.s., $\tilde{u}_{n_j}$ converges to $\tilde{u}$ in $C([0,T];L^2)$ as $j\rightarrow \infty$. For the second part of nonlinear term, it holds
		$$\aligned
		\int_{0}^{t} \big|\big\<f(\tilde{u}(s)),\Pi_{n_j} \phi-\phi \big\>\big| \, ds &\leq \int_{0}^{t} \|f(\tilde{u}(s))\|_{L^2} \|\Pi_{n_j} \phi-\phi \|_{L^2} \, ds \\
		&\lesssim \|\Pi_{n_j} \phi-\phi \|_{L^2}  \int_{0}^{t} \big(1+\|\tilde{u}(s)\|_{L^2}^2\big)^\frac{1}{2} \, ds.
		\endaligned $$
		Since the integral is bounded, it also vanishes as $j\rightarrow \infty$. Summarizing the above arguments, we obtain the convergence for the nonlinear term.
		
		As for the stochastic term, keep in mind that $\{\tilde{v}_{n_j}\}_{j}  \subset L^\infty([0,T];L^2)$ and $\tilde{v} \in L^\infty([0,T];L^2)$, then one can follow the proof of \cite[Theorem 2.2]{FGL21 JEE} to show that
		$$	\tilde{\E} \bigg[\sup_{t\in [0,T]} \bigg|\sum_{k} \int_0^t \big\<\tilde{v}_{n_j}(s), \sigma_{k}\cdot \nabla \Pi_{n_j} \phi\big\> \, d\tilde{B}_s^{n_j,k}-\sum_{k} \int_0^t \big\<\tilde{v}(s), \sigma_{k}\cdot \nabla \phi\big\> \, d\tilde{B}_s^{k}\bigg|\bigg] \rightarrow 0, \quad j\rightarrow \infty.$$
		Thanks to the above discussions, now we let $j\rightarrow \infty$ in \eqref{eq-SWE2 existence appro} and obtain that, $\tilde{\P}$-a.s., $(\tilde{u},\tilde{v},\tilde{B})$ satisfies \eqref{eq-SWE2 sol def} for all $t\in [0,T]$, yielding the existence of weak solution to equation \eqref{eq-SWE2}.
	\end{proof}
	
	Now we briefly explain why we cannot establish the pathwise uniqueness for \eqref{eq-SWE2}.
	\begin{remark}\label{rmk-not unique}
		Suppose $(u^1,v^1)$ and $(u^2,v^2)$ are two solutions to \eqref{eq-SWE2} with the same initial value, defined on the same probability space and driven by the same Brownian motions. Let $(w,z):=(u^1-u^2,v^1-v^2)$. To prove pathwise uniqueness, it suffices to show $(w,z) \equiv 0$.
		
		When estimating $d\mathcal{N}(t):=d\big(\|w(t)\|_{L^2}^2+\|z(t)\|_{H^{-1}}^2\big)$, two terms arise:
		$$2\kappa \big\<(I-\Delta)^{-1} z, \Delta z\> \quad \text{and} \quad  \sum_k \big\|(I-\Delta)^{-\frac{1}{2}} ( \sigma_k \cdot \nabla z)\big\|_{L^2}^2.$$
		If their sum could be controlled by $C_\kappa  \|z\|_{H^{-1}}^2$, then one would obtain
		$$d\mathcal{N}(t) \leq C_\kappa \mathcal{N}(t)+2\sum_k \big\<(I-\Delta)^{-1}z, \sigma_k \cdot \nabla z  \big\> \, dB_t^k;$$
		integrating in time and taking expectations, Gr\"onwall's lemma would imply $\mathcal{N}(t)\equiv 0$. However, we are unable to establish this key estimate, and therefore pathwise uniqueness remains open.
	\end{remark}

	\section{Scaling limit for \eqref{eq-SWE}}\label{sec-scaling limit SWE1}
	In this section, we present the proof of Theorem \ref{thm-scaling limit SWE1}. We will consider equations \eqref{eq-scaling limit} and show that the sequence of solutions $\{u^N\}_N$ converges weakly to the unique solution $\bar{u}$ of a deterministic wave equation under Assumption \ref{asp-2}.
	
	Recalling we have proved in Theorem \ref{thm-SWE1 wellposedness} that, for any $N\geq 1$, equation \eqref{eq-scaling limit} admits a weak solution $(u^N,v^N)\in L^\infty([0,T];H^1\times L^2)$. Since $\partial_t u^N=v^N$, we can deduce $\{u^N\}_N \subset W^{1,\infty}([0,T];L^2)$ as well. Moreover, similar discussions as those in Lemma \ref{lemma-fractional sobolev estimate} imply $\{v^N\}_N \subset W^{\frac{1}{3},4}([0,T];H^{-\rho})$. Hence applying Theorem \ref{thm-compact embedding}, we can claim that $\{\mathcal{L}^N\}_N$, the joint law of $\{(u^N,v^N)\}_{N}$, is tight on $\mathcal{X}=C([0,T];H^{1-\gamma}\times H^{-\gamma})$.
	Now we can provide
	\begin{proof}[Proof of Theorem \ref{thm-scaling limit SWE1}] The proof is divided into three steps for clarity.
		
		\textbf{Step 1.} We first establish the uniqueness of the limit equation \eqref{eq-limit wave equation}. Assume that $\bar{u}_1, \bar{u}_2 \in L^\infty([0,T];H^1)$ are two solutions corresponding to the same initial data, and set $\bar{v}_i:= \partial_t \bar{u}_i$ for $i=1,2$.
		Denote the pair $(\bar{w},\bar{z}):=(\bar{u}_1-\bar{u}_2,\bar{v}_1-\bar{v}_2)$, which satisfies the first-order system
		$$	\left\{
		\aligned
		&\partial_t \bar{w}=\bar{z},  \\
		& \partial_t \bar{z}=\Delta \bar{w}+ f(\bar{u}_1)-f(\bar{u}_2).
		\endaligned
		\right.$$
		Similarly to the proof of Theorem \ref{thm-SWE2 wellposedness}, we define $\bar{\mathcal{N}}(t):=\|\bar{w}(t)\|_{L^2}^2+\|\bar{z}(t)\|_{H^{-1}}^2$ and estimate its decay with respect to time:
		$$\aligned
		\frac{1}{2}\frac{d\bar{\mathcal{N}}(t)}{dt}&=\<\bar{w},\bar{z}\> +\<(I-\Delta)^{-1} \bar{z}, \Delta \bar{w}+ f(\bar{u}_1)-f(\bar{u}_2)\>\\
		&=\<(I-\Delta)^{-1} \bar{z},\bar{w} \>+\<(I-\Delta)^{-1} \bar{z},f(\bar{u}_1)-f(\bar{u}_2)\>  \\
		&\leq \|\bar{z}\|_{H^{-2}} \big( \|\bar{w}\|_{L^2}+\|f(\bar{u}_1)-f(\bar{u}_2)\|_{L^2} \big).
		\endaligned $$
		Applying Young's inequality and the Lipschitz property of $f$, we obtain
		$$\frac{1}{2}\frac{d\bar{\mathcal{N}}(t)}{dt} \lesssim \|\bar{z}\|_{H^{-2}}^2+\|\bar{w}\|_{L^2}^2\leq C \bar{\mathcal{N}}(t).$$
		Recalling that $\bar{\mathcal{N}}(0)=0$, Gr\"onwall's lemma yields $\bar{\mathcal{N}}(t)\equiv 0$ for all $t\in [0,T]$, and the uniqueness of the solution to \eqref{eq-limit wave equation} follows.
		
		\textbf{Step 2.} We then show the weak convergence result. For each $N\geq 1$, let $\eta^N$ denote  the joint law of $(u^N,v^N,B)$. By the compactness arguments above, the family $\{\eta^N\}_{N\geq 1}$ is tight on $\mathcal{X} \times \mathcal{Y}$. Utilizing Prohorov theorem and Skorohod representation theorem, we can find a subsequence $\{N_j\}_j$ and a probability space $(\bar{\Omega},\bar{\mathcal{F}},\bar{\P})$ with stochastic processes $\{(\bar{u}^{N_j},\bar{v}^{N_j},\bar{B}^{N_j})\}_{j\geq 1}$ and $(\bar{u},\bar{v},\bar{B})$ on it, such that
		
		(1) $\bar{\P}$-a.s., $(\bar{u}^{N_j},\bar{v}^{N_j},\bar{B}^{N_j})$ converges to $(\bar{u},\bar{v},\bar{B})$ in $\mathcal{X} \times \mathcal{Y}$ as $j\rightarrow \infty$;
		
		(2) for every $j\geq 1$, $(\bar{u}^{N_j},\bar{v}^{N_j},\bar{B}^{N_j})$ has the same joint law as $(u^{N_j},v^{N_j},B)$.
		
		Hence, for any $\phi \in C^\infty(\T^d)$, it holds
		$$\aligned
		\<\bar{u}^{N_j}(t),\phi\>&=\< u_0, \phi\>+\int_{0}^{t} \<\bar{v}^{N_j}(s),\phi\> \, ds, \\
		\<\bar{v}^{N_j}(t),\phi\>&=\<v_0,\phi\>+\int_{0}^{t} \<\bar{u}^{N_j}(s), \Delta \phi \> \, ds+\int_{0}^{t} \big\<f(\bar{u}^{N_j}(s)),\phi\big\> \, ds-\sum_k \int_{0}^{t} \big\<\bar{u}^{N_j}(s),\sigma_k^{N_j} \cdot \nabla \phi \big\> \, dB^k_s.
		\endaligned $$
		If we pass to the limit $j\rightarrow \infty$ in the above identities and obtain
		\begin{equation}\label{eq-limit eq weak form}
			\begin{split}
				\<\bar{u}(t),\phi\>&=\< u_0, \phi\>+\int_{0}^{t} \<\bar{v}(s),\phi\> \, ds, \\
				\<\bar{v}(t),\phi\>&=\<v_0,\phi\>+\int_{0}^{t} \<\bar{u}(s), \Delta \phi \> \, ds+\int_{0}^{t} \big\<f(\bar{u}(s)),\phi\big\> \, ds,
			\end{split}
		\end{equation}
		we can claim that the limit $(\bar{u},\bar{v})$ mentioned in the point (1) above solves the deterministic equation \eqref{eq-limit wave equation}. Thus, it remains to verify the convergence of each term.
		
		In fact, the linear terms converge evidently since $(\bar{u}^{N_j},\bar{v}^{N_j})$ converges $\bar{\P}$-a.s. to $(\bar{u},\bar{v})$ in $\mathcal{X}$. And we can apply again this convergence result, together with the Lipschitz property of $f$, to obtain the convergence of nonlinear term. It remains to show that, under the Assumption \ref{asp-2}, the martingale term vanishes as $j\rightarrow \infty$ in the sense of mean square. By It\^o isometry, it holds
		$$\aligned
		\E \bigg(\sum_k \int_{0}^{t} \big\<\bar{u}^{N_j}(s),\sigma_k^{N_j} \cdot \nabla \phi \big\> \, d\bar{B}^{N_j,k}_s\bigg)^{\! 2}
		&=\E \int_{0}^{t} \big\<\bar{u}^{N_j} \nabla \phi,Q^{N_j} \ast (\bar{u}^{N_j} \nabla \phi) \big\> ds\\
		&\leq \E \int_{0}^{t} \|\bar{u}^{N_j} \nabla \phi\|_{L^2} \big\|Q^{N_j} \ast \bar{u}^{N_j} \nabla \phi \big\|_{L^2} \, ds\\
		&\leq \big\|Q^{N_j}\big\|_{L^1}  \|\nabla \phi\|_{L^\infty}^2 \, \E \int_{0}^{t} \|\bar{u}^{N_j}\|_{L^2}^2 \, ds,\\
		\endaligned $$
		where the last step follows from the Young's inequality for convolution. By the uniform bound of $\{\bar{u}^{N_j}\}_j$, the expectation is finite, and thus the above estimate vanishes by \eqref{eq-asp on theta}. 
		
		In summary, the weak formulation \eqref{eq-limit eq weak form} holds for any $\phi\in C^\infty$; combining two equations and rewriting them in a more compact way, we obtain \eqref{eq-limit wave equation}.
		
		\textbf{Step 3.} Finally, we establish the strong convergence result. Recall that $\{u^N\}_N$ are probabilistically strong solutions which can be defined on the same probability space and converge to a deterministic limit $\bar{u}$. In this setting, convergence in law is equivalent to convergence in probability. Since we have shown that the laws of $\{u^N \}_N$ converge to the Dirac measure $\delta_{\bar{u}_\cdot}$ on $C([0,T];H^{1-\gamma})$, it follows that for any $\varepsilon>0$,
		$$\P \big\{\|u^N-\bar{u}\|_{C([0,T];H^{1-\gamma})} > \varepsilon\big\} \rightarrow 0, \quad N \rightarrow \infty.$$
		We now derive the corresponding moment estimate:
		$$\aligned
		\E \, \|u^N-\bar{u}\|_{C([0,T];H^{1-\gamma})} ^2&=\E \Big[\|u^N-\bar{u}\|_{C([0,T];H^{1-\gamma})} ^2 \, \textbf{1}_{\{\|u^N-\bar{u}\|_{C([0,T];H^{1-\gamma})} \leq \varepsilon\}}\Big]\\
		&\quad+ \E \Big[\|u^N-\bar{u}\|_{C([0,T];H^{1-\gamma})} ^2 \, \textbf{1}_{\{\|u^N-\bar{u}\|_{C([0,T];H^{1-\gamma})} >\varepsilon\}}\Big]\\
		&\lesssim \varepsilon^2+ 2C_{ub}^2 \, \P \big\{\|u^N-\bar{u}\|_{C([0,T];H^{1-\gamma})}> \varepsilon\big\}.
		\endaligned $$
		Since $\varepsilon>0$ is arbitrary, the right-hand side vanishes as $N \rightarrow \infty$, which yields the desired strong convergence result.
	\end{proof}

	\begin{remark}
		In the proof of Theorem \ref{thm-scaling limit SWE1}, we cannot repeat the proof of Theorem \ref{thm-SWE1 wellposedness} to show that $\{(u^N,v^N)\}_{N}$ is a Cauchy sequence in $L^2\big(\Omega;C([0,T]; L^2\times H^{-1})\big)$, namely
		$$\lim_{ M>N\rightarrow \infty} \E \bigg[\sup_{t\leq T} \Big(\big\|u^M(t)-u^N(t)\big\|_{L^2}^2+ \big\|v^M(t)-v^N(t)\big\|_{H^{-1}}^2\Big)\bigg]=0,$$
		where $(u^M,v^M)$ and $(u^N,v^N)$ are the solutions to equation \eqref{eq-scaling limit} driven by different vector fields $\{\sigma_k^M\}_k$ and $\{\sigma_k^N\}_k$.
		Indeed, when we apply the It\^o formula to estimate $d\|v^M-v^N\|_{H^{-1}}^2$, a quadratic covariation term appears:
		$$\aligned
		&\quad \sum_{k} \big\|(I-\Delta)^{-\frac{1}{2}}  \big(\sigma_k^M \cdot \nabla u^M-\sigma_k^N \cdot \nabla u^N\big) \big\|_{L^2}^2 \\
		&\leq 2 \sum_{k} \big\|(I-\Delta)^{-\frac{1}{2}}  \big( (\sigma_k^M-\sigma_k^N) \cdot \nabla u^M\big) \big\|_{L^2}^2+2\sum_{k} \big\|(I-\Delta)^{-\frac{1}{2}}  \big(\sigma_k^N \cdot \nabla (u^M-u^N)\big) \big\|_{L^2}^2.
		\endaligned $$
		We cannot show that the first term vanishes without additional convergence properties on $\{\sigma_k^N\}_N$.
	\end{remark}

	\section{Scaling limit for \eqref{eq-SWE2}}\label{sec-scaling limit SWE2}
	
	In this section, we prove Theorem \ref{thm-scaling limit SWE2} by dividing the arguments into two parts.
	In Section \ref{subsec-convergence for SWE2}, we present a sketch of the proof for weak convergence. Then, under an additional assumption on the nonlinear term $f$, we derive a quantitative estimate in Section \ref{subsec-quantitative SWE2}.
	
	\subsection{Weak convergence} \label{subsec-convergence for SWE2}
	We first establish the uniqueness of the limit equation \eqref{eq-limit WE2}. Let $(\bar{u}_1,\bar{v}_1), (\bar{u}_2,\bar{v}_2) \in L^\infty([0,T];H^1 \times L^2)$ be two solutions with the same initial value and denote $(\bar{w},\bar{z}):=(\bar{u}_1-\bar{u}_2,\bar{v}_1-\bar{v}_2)$. Then the pair satisfies the equation with $(\bar{w}(0),\bar{z}(0))=(0,0)$:
	$$	\left\{
	\aligned
	&\partial_t \bar{w}=\bar{z},  \\
	& \partial_t \bar{z}=\Delta \bar{w}+ f(\bar{u}_1)-f(\bar{u}_2) +\kappa \Delta \bar{z}.
	\endaligned
	\right.$$
	We define the energy $\bar{\mathcal{N}}(t):=\|\bar{w}(t)\|_{L^2}^2+\|\bar{z}(t)\|_{H^{-1}}^2$ and estimate its decay with respect to time:
	$$\aligned 
	\frac{1}{2}\frac{d\bar{\mathcal{N}}(t)}{dt}&=\<\bar{w},\bar{z}\> +\<(I-\Delta)^{-1} \bar{z}, \Delta \bar{w}+ f(\bar{u}_1)-f(\bar{u}_2) +\kappa \Delta \bar{z}\>\\
	&=\<(I-\Delta)^{-1} \bar{z},\bar{w}+f(\bar{u}_1)-f(\bar{u}_2) \> +\kappa \<(I-\Delta)^{-1}\bar{z}, \Delta \bar{z} \>.
	\endaligned $$
	In fact, one can easily deduce that $\<(I-\Delta)^{-1}\bar{z}, \Delta \bar{z} \> \leq 0$, and therefore we utilize Lipschitz property of $f$ and Young's inequality to obtain
	$$\frac{1}{2}\frac{d\bar{\mathcal{N}}(t)}{dt} \leq \|(I-\Delta)^{-1} \bar{z}\|_{L^{2}}  \big(\|\bar{w}\|_{L^2}+\|f(\bar{u}_1)-f(\bar{u}_2) \|_{L^2} \big) \lesssim \|\bar{z}\|_{H^{-2}}^2+\|\bar{w}\|_{L^2}^2 \leq C \bar{\mathcal{N}}(t).$$
	Then Gr\"onwall's lemma yields $\bar{\mathcal{N}}(t)\equiv 0$ for all $t\in [0,T]$, and the uniqueness follows.
	
	Next, by arguments similar to those in the proof of Theorem \ref{thm-scaling limit SWE1}, we can construct a probability space $(\bar{\Omega},\bar{\mathcal{F}},\bar{\P})$ with stochastic processes $\{(\bar{u}^{N_j},\bar{v}^{N_j},\bar{B}^{N_j})\}_{j\geq 1}$ and $(\bar{u},\bar{v},\bar{B})$ on it, such that  for any $j\geq 1$, $(\bar{u}^{N_j},\bar{v}^{N_j},\bar{B}^{N_j})$ has the same joint law as $(u^{N_j},v^{N_j},B)$; and $\bar{\P}$-a.s., the former triple converges to $(\bar{u},\bar{v},\bar{B})$ in $\mathcal{X} \times \mathcal{Y}$ as $j\rightarrow \infty$.
	In this setting, the weak convergence of the linear and nonlinear terms follows directly from the convergence of  $(\bar{u}^{N_j},\bar{v}^{N_j})$ to $(\bar{u},\bar{v})$ in $\mathcal{X}=C([0,T];H^{1-\gamma} \times H^{-\gamma})$, together with the Lipschitz property of $f$.
	
	Finally, using the uniform bound  \eqref{eq-asp on v ub}, one can adapt discussions similar to Step 2 in the proof of Theorem \ref{thm-scaling limit SWE1} to show that the martingale term vanishes in $L^2(\bar{\Omega})$. We omit the details for brevity.

	\subsection{Quantitative estimate} \label{subsec-quantitative SWE2}
	Here we present a quantitative estimate for the difference of $\{u^N\}_N$ and $\bar{u}$ in the space $C([0,T];H^{-a})$ for $a\in (0,\frac{1}{2})$.
	To begin with, we give a key lemma for later use.
	\begin{lemma}\label{lemma-Cb2 f}
		Suppose $f\in C_b^2(\R)$, $u^N$ and $\bar{u}$ the solutions to \eqref{eq-scaling limit SWE2}, \eqref{eq-limit WE2} respectively, then for $a\in (0,\frac{1}{2})$ and $d=2,3$, it holds
		\begin{equation}\label{eq-f sobolev estimate}
			\big\|f(u^N_s)-f(\bar{u}_s)\big\|_{H^{-a-\frac{1}{2}}(\T^d)} \lesssim \|u^N_s-\bar{u}_s\|_{H^{-a}(\T^d)}.
		\end{equation}
	\end{lemma}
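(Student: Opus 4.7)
The plan is to linearize the difference $f(u^N_s) - f(\bar{u}_s)$ and then reduce the bound to a bilinear product estimate in Sobolev spaces. First, I introduce
$$g_s := \int_0^1 f'\big(\bar{u}_s + \tau(u^N_s - \bar{u}_s)\big)\, d\tau,$$
so that $f(u^N_s) - f(\bar{u}_s) = g_s\,(u^N_s - \bar{u}_s)$ pointwise. Since $f \in C_b^2(\R)$, one has $\|g_s\|_{L^\infty} \leq \|f'\|_\infty$, and differentiating under the integral yields
$$\nabla g_s = \int_0^1 f''\big(\bar{u}_s + \tau(u^N_s-\bar{u}_s)\big)\big(\nabla \bar{u}_s + \tau \nabla(u^N_s-\bar{u}_s)\big)\, d\tau,$$
so that $\|\nabla g_s\|_{L^2} \lesssim \|f''\|_\infty\big(\|u^N_s\|_{H^1} + \|\bar{u}_s\|_{H^1}\big) \lesssim C_{ub}$ by Theorem \ref{thm-SWE2 wellposedness} and the analogous bound for $\bar u$. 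Hence $\|g_s\|_{H^1}$ is controlled uniformly in $N$ and $s$.

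The next step is to pass to duality. Writing $w_s := u^N_s - \bar{u}_s$,
$$\big\|g_s w_s\big\|_{H^{-a-1/2}} = \sup_{\|\phi\|_{H^{a+1/2}} \le 1} \<w_s,\, g_s \phi\> \leq \|w_s\|_{H^{-a}} \sup_{\|\phi\|_{H^{a+1/2}} \le 1} \|g_s \phi\|_{H^a}.$$
The lemma therefore reduces to the product estimate
$$\|g_s \phi\|_{H^a(\T^d)} \lesssim \|g_s\|_{H^1(\T^d)}\, \|\phi\|_{H^{a+1/2}(\T^d)},$$
valid for $a \in (0,1/2)$ and $d \in \{2,3\}$.

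For $d=2$ this is a standard Sobolev product estimate, since the indices satisfy the noncritical balance $1 + (a+1/2) - a = 3/2 > 1 = d/2$. For $d=3$ the same balance becomes critical, and the bound has to be closed using the Kato--Ponce fractional Leibniz rule together with the Sobolev embeddings $H^1(\T^3) \hookrightarrow L^6$ and $H^{a+1/2}(\T^3) \hookrightarrow L^{3/(1-a)}$. Using H\"older pairs $(6,3)$ and $(6/(1+2a),\, 3/(1-a))$ one obtains
$$\|g\phi\|_{H^a} \lesssim \|g\|_{L^6}\|\phi\|_{W^{a,3}} + \|g\|_{W^{a,\,6/(1+2a)}}\|\phi\|_{L^{3/(1-a)}},$$
and each factor on the right is controlled by $\|g\|_{H^1}\|\phi\|_{H^{a+1/2}}$ via Sobolev embedding. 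An equivalent route uses Bony's paraproduct decomposition together with the Besov embedding $H^1(\T^3) \hookrightarrow B^{-1/2}_{\infty,\infty}(\T^3)$ to handle the low-high paraproduct and the symmetric remainder.

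The main obstacle is precisely this borderline product estimate when $d=3$, which saturates the Sobolev scaling and explains why the lemma (and the quantitative bound in Theorem \ref{thm-scaling limit SWE2}) is restricted to $d\in\{2,3\}$: once $d\geq 4$, a uniform $H^1$ bound on $g_s$ is no longer enough to multiply $H^{a+1/2}$ back into $H^a$, and the duality argument breaks down.
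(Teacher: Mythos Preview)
Your proof is correct and follows the same strategy as the paper: linearize via the mean value theorem to write $f(u^N_s)-f(\bar u_s)=g_s\,(u^N_s-\bar u_s)$, control the multiplier $g_s$ in $H^1$ using $f\in C_b^2$ and the uniform $H^1$ bounds on $u^N,\bar u$, and close with a Sobolev product estimate.

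The only difference is in how the product estimate is set up. The paper applies the bilinear rule $\|gh\|_{H^{s_1+s_2-d/2}}\lesssim\|g\|_{H^{s_1}}\|h\|_{H^{s_2}}$ (for $s_1,s_2<d/2$, $s_1+s_2>0$) \emph{directly} with the negative index $s_1=-a$ and $s_2=\lambda=(d-1)/2$, and then uses $\|g_s\|_{H^\lambda}\le\|g_s\|_{H^1}$ since $\lambda\le 1$ for $d=2,3$. Because $\lambda<d/2$ strictly in both dimensions, no endpoint arises. Your duality detour reduces instead to $\|g\phi\|_{H^a}\lesssim\|g\|_{H^1}\|\phi\|_{H^{a+1/2}}$; for $d=3$ this is exactly the same product rule with $s_1=1<3/2$, $s_2=a+1/2<3/2$, so the Kato--Ponce and paraproduct machinery you invoke is unnecessary. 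The paper's direct route is slightly cleaner, but your argument is valid.
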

	
	\begin{proof}
		We apply mean value theorem to write
		$$f(u^N_s)-f(\bar{u}_s)=(u^N_s-\bar{u}_s) \int_{0}^{1} f'\big(\bar{u}_s+\theta(u^N_s-\bar{u}_s)\big) \, d\theta=:(u^N_s-\bar{u}_s) \mathcal{M}_s(x,\omega).$$
		Recall the Sobolev product inequality: if $s_1,s_2 <\frac{d}{2}$ and $s_1+s_2>0$, then
		$$	\|gh\|_{H^{s_1+s_2-\frac{d}{2}}(\T^d)} \lesssim \|g\|_{H^{s_1}(\T^d)} \|h\|_{H^{s_2}(\T^d)}. $$
		For $d=2,3$ and  $a\in(0,\frac12)$, we choose $\lambda=\frac{d-1}{2}$, so that $\lambda< \frac{d}{2}$ and $\lambda-a>0$. It follows that
		$$\big\|f(u^N_s)-f(\bar{u}_s) \big\|_{H^{-a-\frac{1}{2}}(\T^d)} \lesssim \|u^N_s-\bar{u}_s\|_{H^{-a}(\T^d)} \|\mathcal{M}_s\|_{H^\lambda(\T^d)}.$$
		Since $\lambda\leq 1$ for $d=2,3$, we may further bound $\|\mathcal{M}_s\|_{H^\lambda}$ by $\|\mathcal{M}_s\|_{H^1}$, which is controlled by
		$$\aligned
		&\quad \|\mathcal{M}_s\|_{H^1}^2=\|\mathcal{M}_s\|_{L^2}^2+\|\nabla \mathcal{M}_s\|_{L^2}^2\\
		&=\int_{\T^d} \Big|\int_{0}^{1} f'\big(\bar{u}_s+\theta(u^N_s-\bar{u}_s)\big) \, d\theta\Big|^2 \, dx+\int_{\T^d} \Big|\int_{0}^{1} \nabla \Big( f'\big(\bar{u}_s+\theta(u^N_s-\bar{u}_s)\big) \Big)\, d\theta\Big|^2 \, dx\\
		&\leq C \|f'\|_{L^\infty}^2+\int_{\T^d} \Big|\int_{0}^{1}  f''\big(\bar{u}_s+\theta(u^N_s-\bar{u}_s)\big) \big((1-\theta) \nabla \bar{u}_s+\theta \nabla u^N_s\big)\, d\theta\Big|^2 \, dx.
		\endaligned $$
		We denote the last integral term as $J(s)$. By triangle inequality,
		$$\aligned
		&\quad \Big|\int_{0}^{1}  f''\big(\bar{u}_s+\theta(u^N_s-\bar{u}_s)\big) \big((1-\theta) \nabla \bar{u}_s+\theta \nabla u^N_s\big)\, d\theta\Big| \\
		&\leq \int_{0}^{1} \Big|f''\big(\bar{u}_s+\theta(u^N_s-\bar{u}_s)\big)\Big|  \Big((1-\theta) \big|\nabla \bar{u}_s\big|+\theta \big|\nabla u^N_s\big|\Big) \, d\theta\\
		&\leq \frac{1}{2} \|f''\|_{L^\infty} \Big( \big|\nabla \bar{u}_s\big|+ \big|\nabla u^N_s\big| \Big),
		\endaligned $$
		and therefore we can take integral on the torus $\T^d$ to get
		$$J(s) \leq \frac{1}{4} \|f''\|_{L^\infty}^2 \int_{\T^d}  \Big( \big|\nabla \bar{u}_s\big|+ \big|\nabla u^N_s\big| \Big)^2 \, dx \leq \frac{1}{2} \|f''\|_{L^\infty}^2  \Big(\|\nabla \bar{u}_s\|_{L^2}^2+\|\nabla u^N_s\|_{L^2}^2\Big).$$
		Now we can utilize the facts that $\bar{u}$ is bounded in $L^\infty([0,T];H^1)$ and $\{u^N\}_N$ has a uniform upper bound which is independent of $\omega$ in this space to claim that $\|\mathcal{M}_s\|_{H^1}^2$ can be controlled by some finite constant, which implies that \eqref{eq-f sobolev estimate} holds.
	\end{proof}
	
	\begin{remark}\label{rmk-dimension restrction}
		In \eqref{eq-f sobolev estimate}, one may also consider other Sobolev norms on the left-hand side, such as $H^{-a-1}$ and $H^{-a-\epsilon}$. However, applying the Sobolev product inequality requires the condition $\lambda-a>0$. Moreover, in order to
		bound $\|\mathcal{M}_s\|_{H^\lambda}$ by $\|\mathcal{M}_s\|_{H^1}$ and to utilize the assumption $f\in C_b^2(\R)$, it is necessary to let $\lambda \leq 1$.
		
		These two requirements together impose restrictions on the admissible spatial dimensions. Since the cases $d=2,3$ are the most natural and physically relevant for the wave equation, we therefore choose the $H^{-a-\frac{1}{2}}$-norm in the above lemma and concern about the results for $d=2,3$.
	\end{remark}

	Based on the above lemma, now we can establish the quantitative convergence result of Theorem \ref{thm-scaling limit SWE2}. Define $P_t=e^{\kappa t \Delta}$, then the second equation in \eqref{eq-scaling limit SWE2}  can be written as
	$$v^N_t=P_t v_0+\int_{0}^{t} P_{t-s} \Delta u^N_s \, ds+\int_{0}^{t} P_{t-s} f(u_s^N) \, ds+\sum_{k} \int_{0}^{t} P_{t-s} \big(\sigma_{k}^N \cdot \nabla v^N_s\big) \, dB^k_s. $$
	We denote the stochastic convolution as $Z_t^N$.
	Similarly, we write the second equation of \eqref{eq-limit WE2} as
	$$\bar{v}_t=P_t v_0+\int_{0}^{t} P_{t-s} \Delta \bar{u}_s \, ds+\int_{0}^{t} P_{t-s} f(\bar{u}_s) \, ds.$$
	If we make difference and take $H^{-a}$-norm, then we obtain
	$$	\|v_t^N-\bar{v}_t\|_{H^{-a}}^2 \lesssim \bigg\|\int_{0}^{t} P_{t-s} \Delta (u_s^N-\bar{u}_s) \, ds\bigg\|_{H^{-a}}^2+\bigg\|\int_{0}^{t} P_{t-s} \big(f(u_s^N)-f(\bar{u}_s) \big) \, ds\bigg\|_{H^{-a}}^2+\big\|Z_t^N \big\|_{H^{-a}}^2.$$
	Taking integral with respect to time from $0$ to $T$ yields
	$$\int_{0}^{T} \|v_t^N-\bar{v}_t\|_{H^{-a}}^2 \, dt \lesssim I_1(T)+I_2(T)+ \int_{0}^{T} \big\|Z_t^N\big\|_{H^{-a}}^2 \, dt,$$
	where $I_1(T) $ and $I_2(T)$ are defined in an obvious way.
	We will estimate each term one by one. By Lemma \ref{lemma-semigroup property}, it holds
	$$\aligned
	I_1(T) \leq \frac{1}{\kappa^2} \int_{0}^{T} \big\|\Delta(u_s^N-\bar{u}_s)\big\|_{H^{-a-2}}^2 \, ds \leq \frac{1}{\kappa^2} \int_{0}^{T} \big\|u_s^N-\bar{u}_s\big\|_{H^{-a}}^2 \, ds.
	\endaligned $$
	Since $u^N_s-\bar{u}_s=\int_{0}^{s} (v^N_r-\bar{v}_r) \, dr$, we can deduce that for any $s\in [0,T]$,
	\begin{equation}\label{eq-relation u v}
		\|u^N_s-\bar{u}_s\|_{H^{-a}}^2\leq \bigg(\int_{0}^{s} \|v^N_r-\bar{v}_r\|_{H^{-a}} \, dr\bigg)^2 \leq s \int_{0}^{s} \|v_r^N-\bar{v}_r\|_{H^{-a}}^2 \, dr,
	\end{equation}
	and therefore
	$$I_1(T) \leq \frac{T}{\kappa^2} \int_{0}^{T}  \int_{0}^{s} \|v_r^N-\bar{v}_r\|_{H^{-a}}^2 \, dr ds.$$
	
	Then we turn to the term $I_2(T)$. Applying Lemmas \ref{lemma-semigroup property} and \ref{lemma-Cb2 f}, we have
	\begin{equation}\label{eq-f sobolev}
		I_2(T) \leq \frac{1}{\kappa^2} \int_{0}^{T} \big\|f(u_s^N)-f(\bar{u}_s)\big\|_{H^{-a-2}}^2 \, ds \lesssim \frac{1}{\kappa^2} \int_{0}^{T} \|u_s^N-\bar{u}_s\|_{H^{-a}}^2 \, ds.
	\end{equation}
	Substituting \eqref{eq-relation u v} into the above one, we obtain the estimate for this term:
	$$I_2(T) \lesssim \frac{T}{\kappa^2} \int_{0}^{T} \int_{0}^{s} \|v_r^N-\bar{v}_r\|_{H^{-a}}^2 \, drds.$$
	Finally, we discuss the stochastic convolution. By Corollary \ref{coro-stochastic convolution}, it holds
	$$\E \int_{0}^{T} \|Z_t^N\|_{H^{-a}}^2 \, dt \lesssim_{\epsilon,T,\kappa} C_{ub}^2 \big\|Q^N\big\|_{L^1}^\frac{2(a-\epsilon)}{d}.$$
	Now we can summarize the arguments and arrive at
	$$\E \bigg[\int_{0}^{T} \|v_t^N-\bar{v}_t\|_{H^{-a}}^2 \, dt\bigg] \lesssim \frac{T}{\kappa^2} \, \E \bigg[\int_{0}^{T}  \int_{0}^{s} \|v_r^N-\bar{v}_r\|_{H^{-a}}^2 \, dr ds\bigg]+C_{ub}^2 \big\|Q^N\big\|_{L^1}^\frac{2(a-\epsilon)}{d},$$
	and Gr\"onwall's lemma leads to
	$$\E \bigg[\int_{0}^{T} \|v_t^N-\bar{v}_t\|_{H^{-a}}^2 \, dt \bigg]\lesssim_{T,\kappa} C_{ub}^2 \big\|Q^N\big\|_{L^1}^\frac{2(a-\epsilon)}{d}.$$
	Applying \eqref{eq-relation u v} again, we can take expectation and  obtain
	$$	\E \, \bigg[\sup_{t\in [0,T]} \|u_t^N - \bar{u}_t\|_{H^{-a}}^2 \bigg]	\leq T \, \E \bigg[\int_0^T \|v_s^N - \bar{v}_s\|_{H^{-a}}^2 \, ds\bigg] \lesssim_{T,\kappa}  C_{ub}^2 \big\|Q^N\big\|_{L^1}^\frac{2(a-\epsilon)}{d}.$$
	The proof of Theorem \ref{thm-scaling limit SWE2} is complete.

	\bigskip
	\noindent\textbf{Acknowledgements.} The second author is supported by the National Key R\&D Program of China (No.  2024YFA1012301).


\begin{thebibliography}{99} \setlength{\itemsep}{-1pt}
		\bibitem{Agresti 24} A. Agresti. Delayed blow-up and enhanced diffusion by transport noise for systems of reaction-diffusion equations. \emph{Stoch. Partial Differ. Equ. Anal. Comput.} {\bf 12} (2024), no.~3, 1907--1981.
		
		\bibitem{Agr24b} A. Agresti. Global smooth solutions by transport noise of 3D Navier-Stokes equations with small hyperviscosity. arXiv:2406.09267v2.
		
		\bibitem{AC1999} M. A. Averkiou, R. O. Cleveland. Modeling of an electrohydraulic lithotripter with the KZK equation. \emph{J. Acoust. Soc. Am.} {\bf 106} (1999), no. 1, 102--112.
		
		\bibitem{BCF92} Z. Brze\'zniak, M. Capi\'nski, F. Flandoli. Stochastic Navier-Stokes equations with multiplicative noise. \emph{Stochastic Anal. Appl.} {\bf 10} (1992), no.~5, 523--532.
		
		\bibitem{BFM16} Z. Brze\'{z}niak, F. Flandoli, M. Maurelli. Existence and uniqueness for stochastic 2D Euler flows with bounded vorticity. \emph{Arch. Ration. Mech. Anal.} {\bf 221} (2016), no.~1, 107--142.
		
		\bibitem{CL23} G. Carigi, E. Luongo. Dissipation properties of transport noise in the two-layer quasi-geostrophic model. \emph{J. Math. Fluid Mech.} {\bf 25} (2023), no.~2, Paper No. 28, 27 pp.
		
		\bibitem{CarNua88a} R. Carmona, D. Nualart. Random nonlinear wave equations: propagation of singularities. \emph{Ann. Probab.} \textbf{16} (1988), no.~2, 730--751.
		
		\bibitem{CarNua88b} R. Carmona, D. Nualart. Random nonlinear wave equations: smoothness of the solutions. \emph{Probab. Theory Related Fields} \textbf{79} (1988), no.~4, 469--508.
		
		\bibitem{Chow81} P. L. Chow, W. E. Kohler, G. C. Papanicolaou.  Multiple scattering and waves in random media. Proceedings of the U.S. Army Workshop, March 24--26, 1980. North-Holland Publishing Co., Amsterdam-New York, 1981.
		
		\bibitem{Chow 2002} P. L. Chow. Stochastic wave equations with polynomial nonlinearity. \emph{Ann. Appl. Probab.} {\bf 12} (2002), no.~1, 361-381.
		
		\bibitem{Dalang09} R. C. Dalang. The stochastic wave equation. \emph{A mini course on stochastic partial differential equations}, 39--71, Lecture Notes in Math., 1962, \emph{Springer, Berlin}, 2009.
		
		\bibitem{DalFran98} R. C. Dalang, N. E. Frangos. The stochastic wave equation in two spatial dimensions. \emph{Ann. Probab.} \textbf{26} (1998), no. 1, 187--212.
		
		\bibitem{DKBR2000} T. Dreyer, W. Krauss, E. Bauer, R.E. Riedlinger. Investigations of compact self focusing transducers using stacked piezoelectric elements for strong sound pulses in therapy. Proceedings. An International Symposium, 2000, 2: 1239--1242.
		
		\bibitem{FGL21 JEE} F. Flandoli, L. Galeati, D. Luo. Scaling limit of stochastic 2D Euler equations with transport noises to the deterministic Navier-Stokes equations. \emph{J. Evol. Equ.} {\bf 21} (2021), no.~1, 567--600.
		
		\bibitem{FLD quantitative} F. Flandoli, L. Galeati, D. Luo. Quantitative convergence rates for scaling limit of SPDEs with transport noise. \emph{J. Differential Equations} {\bf 394} (2024), 237--277.
		
		\bibitem{FGP10} F. Flandoli, M. Gubinelli, E. Priola. Well-posedness of the transport equation by stochastic perturbation. \emph{Invent. Math.} {\bf 180} (2010), no.~1, 1--53.
		
		\bibitem{FHLN22} F. Flandoli, M. Hofmanov\'a, D. Luo, T. Nilssen. Global well-posedness of the 3D Navier-Stokes equations perturbed by a deterministic vector field. \emph{Ann. Appl. Probab.} \textbf{32} (2022), no. 4, 2568--2586.
		
		\bibitem{FL21 PTRF} F. Flandoli, D. Luo. High mode transport noise improves vorticity blow-up control in 3D Navier--Stokes equations. \emph{Probab. Theory Related Fields} {\bf 180} (2021), no.~1-2, 309--363.
		
		\bibitem{FLL24} F. Flandoli, D. Luo, E. Luongo. 2D Smagorinsky-type large eddy models as limits of stochastic PDEs. \emph{J. Nonlinear Sci.} {\bf 34} (2024), no.~3, Paper No. 54, 25 pp.
		
		\bibitem{FlanLuongo23} F. Flandoli, E. Luongo. Stochastic partial differential equations in fluid mechanics. Lecture Notes in Mathematics, 2330. \emph{Springer, Singapore}, 2023.
		
		\bibitem{FP21} F. Flandoli, U. Pappalettera. 2D Euler equations with Stratonovich transport noise as a large-scale stochastic model reduction. \emph{J. Nonlinear Sci.} {\bf 31} (2021), no.~1, Paper No. 24, 38 pp.
		
		\bibitem{FP22} F. Flandoli, U. Pappalettera. From additive to transport noise in 2D fluid dynamics. \emph{Stoch. Partial Differ. Equ. Anal. Comput.} {\bf 10} (2022), no.~3, 964--1004.
		
		\bibitem{FF IEEE} C. Le Floch, M. Fink. Ultrasonic mapping of temperature in hyperthermia: the thermal lens effect. In 1997 IEEE Ultrasonics Symposium Proceedings. An International Symposium, Vol. 2, 1301--1304.
		
		\bibitem{Gal20} L. Galeati. On the convergence of stochastic transport equations to a deterministic parabolic one. \emph{Stoch. Partial Differ. Equ. Anal. Comput.} {\bf 8} (2020), no.~4, 833--868.
		
		\bibitem{HB2024} M. F. Hamilton,  D. T. Blackstock. Nonlinear acoustics. Springer Nature, 2024.
		
		\bibitem{DDHolm 15} D. D. Holm. Variational principles for stochastic fluid dynamics. \emph{Proc. A.} {\bf 471} (2015), no.~2176, 20140963, 19 pp.
		
		\bibitem{NJ2016} N. Jim\'enez. Nonlinear acoustic waves in complex media. \emph{Revista de ac\'ustica.} {\bf 47} (2016), no.~1, 47-48.
		
		\bibitem{LangCri23} O. Lang, D. Crisan. Well-posedness for a stochastic 2D Euler equation with transport noise. \emph{Stoch. Partial Differ. Equ. Anal. Comput.} {\bf 11} (2023), no.~2, 433--480.
		
		\bibitem{Luo21} D. Luo. Convergence of stochastic 2D inviscid Boussinesq equations with transport noise to a deterministic viscous system. \emph{Nonlinearity} {\bf 34} (2021), no.~12, 8311--8330.
		
		\bibitem{MR04} R. Mikulevicius, B. L. Rozovskii. Stochastic Navier-Stokes equations for turbulent flows. \emph{SIAM J. Math. Anal.} {\bf 35} (2004), no.~5, 1250--1310.
		
		\bibitem{MR05} R. Mikulevicius, B. L. Rozovskii. Global $L_2$-solutions of stochastic Navier-Stokes equations. \emph{Ann. Probab.} {\bf 33} (2005), no.~1, 137--176.
		
		\bibitem{MilSanz21} A. Millet, M. Sanz-Sol\'e. Global solutions to stochastic wave equations with superlinear coefficients. \emph{Stochastic Process. Appl.} \textbf{139} (2021), 175--211.
		
		\bibitem{Mueller1997} C. Mueller. Long time existence for the wave equation with a noise term. \emph{Ann. Probab.} {\bf 25} (1997), no. 1, 133--151.
		
		\bibitem{PFH23} A. Papini, F. Flandoli, R. J. Huang. Turbulence enhancement of coagulation: the role of eddy diffusion in velocity. \emph{Phys. D.} {\bf 448} (2023), Paper No. 133726, 16 pp.
		
		\bibitem{QS24} Z. Qiu, C. Sun. Stochastic Landau-Lifshitz-Bloch equation with transport noise: wellposedness,
		dissipation enhancement. \emph{J. Stat. Phys.} {\bf 191} (2024), no.~4, Paper No. 43, 29 pp.
		
		\bibitem{RL 1990} B. L. Rozovsky, S. V. Lototsky.  Stochastic Evolution Systems: Linear Theory and Applications to Non-Linear Filtering. Probability Theory and Stochastic Modelling, 89, Springer, Cham, 2018.
		
		\bibitem{SVE IEEE} C. Simon, P. VanBaren, E. S. Ebbini. Two-dimensional temperature estimation using diagnostic ultrasound. IEEE Transactions on Ultrasonics, Ferroelectrics, and Frequency Control 2002, 45(4), 1088--1099.
		
		\bibitem{JSimon} J. Simon. Compact sets in the space $L^p (0, T; B)$. \emph{Ann. Mat. Pura Appl. (4)} {\bf 146} (1987), 65--96.
		
		\bibitem{SK2015} I. Shevchenko, B. Kaltenbacher. Absorbing boundary conditions for nonlinear acoustics: The Westervelt equation. \emph{J. Comput. Phys.} \textbf{302} (2015), 200--221.
		
		\bibitem{Westervelt1963} P. J. Westervelt. Parametric acoustic array. \emph{J. Acoust. Soc. Am.} \textbf{35(4)} (1963), 535--537.
		
	\end{thebibliography}
\end{document}